\newtheorem {thm}   {Theorem}
\newtheorem* {thm*}   {Theorem}
\newtheorem* {prp*}   {Proposition}
\newtheorem {lem}      [thm]    {Lemma}
\newtheorem {prp}[thm]  {Proposition}
\newcounter{AbcT}
\theoremstyle{definition}
\newtheorem*{nota}{Notation}
\renewcommand{\a}{\alpha}
\renewcommand{\b}{\beta}
\renewcommand{\d}{\delta}
\newcommand{\e}{\varepsilon}
\renewcommand{\l}{\lambda}
\newcommand{\s}{\sigma}
\newcommand{\R}{{\bf R}}
\newcommand{\Q}{{\bf Q}}
\newcommand{\Z}{{\bf Z}}
\newcommand{\E}{{\bf E}}
\renewcommand{\P}{{\bf P}}
\newcommand {\cN} {{\mathcal N}}
\newcommand {\cX} {{\mathcal X}}
\newcommand {\cY} {{\mathcal Y}}
\newcommand{\wt}{\widetilde}
\DeclareMathOperator{\supp}{supp}
\newcommand{\chiN}{{\chi_N}}
\newcommand{\chiM}{{\chi_M}}
\title[Bernoulli convolutions]%
{Absolute continuity of Bernoulli convolutions for algebraic parameters}
\author{P\'eter P. Varj\'u}
\thanks{
I gratefully acknowledge the support
of the Royal Society.
}
\keywords{Bernoulli convolution, self-similar measure, absolute continuity, Mahler measure}
\begin{document}

\begin{abstract}
We prove that Bernoulli convolutions $\mu_\l$ are absolutely continuous provided
the parameter $\l$ is an algebraic number sufficiently close to $1$ depending
on the Mahler measure of $\l$.
\end{abstract}

\maketitle

\section{Introduction}\label{sc:intro}

Let $\l,p\in(0,1)$ be real numbers and let $\xi_1,\xi_2,\ldots$ be a sequence of independent
random variables with $\P(\xi_n=1)=p$, $\P(\xi_n=-1)=1-p$.
We define the Bernoulli convolution $\mu_{\l,p}$ with parameter $\l$ and $p$ as the law of the random variable
$\sum_{n=0}^\infty \xi_n\l^n$.

This paper is concerned with the problem of whether $\mu_{\l,p}$ is absolutely continuous or
singular with respect to the Lebesgue measure for given parameters $\l$ and $p$.
It is well-known that $\mu_{\l,p}$, like all self-similar measures, is of pure type.
The main result of the paper is the following.

\begin{thm}\label{th:main}
For every $\e>0$ and $p\in(0,1)$,
there is $c>0$ such that the following holds.
Let $\l<1$ be an algebraic number and suppose that
\[
\l> 1- c\min(\log M_\l,(\log M_\l)^{-1-\e}).
\]
Then the Bernoulli convolution $\mu_{\l,p}$ is absolutely continuous
with density in $L\log L$.
\end{thm}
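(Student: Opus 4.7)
My plan is to exploit the self-similarity of $\mu_{\l,p}$. Writing $\nu_N$ for the law of the truncated sum $\sum_{n=0}^{N-1}\xi_n \l^n$ and $\mu_N'$ for the law of $\l^N X$ with $X\sim\mu_{\l,p}$, one has the convolution identity
\[
\mu_{\l,p} = \nu_N * \mu_N'.
\]
Absolute continuity of $\mu_{\l,p}$ will follow once I show that $\nu_N$ is sufficiently smooth at scale $\l^N$ for some large $N$. Concretely, this reduces to controlling a discrete entropy $H_n$ of $\mu_{\l,p}$ at scale $\l^n$: one needs $nH(p)-H_n$ to be small enough that $\nu_N$ looks essentially uniform at scales above the spacing of its atoms.

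The core step is to show that, under the hypothesis on $\l$, $H_n$ is within a controlled additive gap of its maximum $nH(p)$. Two ingredients combine here. First, for algebraic $\l$ any two distinct sums $\sum_{n=0}^{N-1}\e_n \l^n$ and $\sum_{n=0}^{N-1}\e_n'\l^n$ with $\e_n,\e_n'\in\{\pm 1\}$ differ by a nonzero algebraic number of absolute value at least $\sim M_\l^{-CN}$; this is the standard lower bound coming from the product formula applied to the difference polynomial, and is the only place where $M_\l$ enters. Second, an entropy-increment scheme turns this arithmetic separation into a quantitative lower bound on $H_n$: one compares $\nu_N$ with its self-convolution and iteratively boosts the entropy, using the separation to rule out destructive overlaps.

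To convert the entropy bound into absolute continuity, I would observe that if $\nu_N$ places nearly $2^N$ atoms of comparable mass in an interval of length $\sim(1-\l)^{-1}$, then at scales larger than the mean spacing $2^{-N}(1-\l)^{-1}$ it acts like a smooth density, and its convolution with $\mu_N'$ is absolutely continuous. The two terms in the hypothesis arise from balancing competing constraints: the crude separation bound already requires $1-\l \lesssim \log M_\l$ so that $M_\l^{-CN}$ does not swamp the geometric scale $\l^N$, while iterating the entropy-increment over $\sim\log(1/(1-\l))$ scales forces the more delicate bound $1-\l\lesssim(\log M_\l)^{-1-\e}$ in order to control accumulated errors.

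The hard part will be the entropy-increment step: turning the Mahler-measure separation into near-maximal entropy simultaneously at many scales, with error terms that do not blow up when iterated. This multi-scale accumulation, with sharp dependence on $M_\l$, is the delicate heart of the argument; the final passage from high entropy to absolute continuity via mollification or Fourier decay is then comparatively routine.
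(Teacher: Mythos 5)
Your sketch correctly identifies two genuine ingredients—the self-similar convolution structure $\mu_{\l,p}=\nu_N*\mu_N'$ and the Garsia/Mahler separation lower bound for the atoms of $\nu_N$—but there is a serious gap in the last two steps, which you describe as ``comparatively routine.'' They are not, and your formulation of them is where the argument would fail.

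First, absolute continuity cannot be deduced from $\nu_N$ being ``sufficiently smooth at scale $\l^N$ for some large $N$.'' For any fixed $N$, $\mu_N'$ is a shrunken copy of $\mu_{\l,p}$ itself, so the convolution identity only transports the question to a smaller scale; it does not close it. What is actually needed is a \emph{uniform-in-scale} statement: writing $H(\mu;r)$ for the entropy of $\mu$ at scale $r$ with random translation, one needs $\log r^{-1}-H(\mu_{\l,p};r)$ to remain \emph{bounded} as $r\to 0$ (Garsia's criterion, Proposition~\ref{pr:Garsia}). Equivalently, the per-octave deficits $1-H(\mu_{\l,p};2^{-n}|2^{-n+1})$ must be \emph{summable} in $n$. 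Your proposal never produces such a summable sequence, and ``acting like a smooth density'' at one scale cannot yield it.

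Second, the mechanism you gesture at—``comparing $\nu_N$ with its self-convolution and iteratively boosting entropy''—does not, as stated, generate summable deficits. With straightforward entropy-growth estimates the per-octave deficit decreases, but only barely, and the natural bound is the qualitative $1-H(\mu_\l;2^{-n}|2^{-n+1})\to 0$, which gives $\dim\mu_\l=1$ but says nothing about absolute continuity. The crucial missing idea in your sketch is a \emph{quantitative self-improvement under convolution in the high-entropy regime}: if two factors each have per-scale deficit $\le\a$ at a window of nearby scales, then the convolution has deficit $\lesssim (\log\a^{-1})^3\a^2$ (Theorem~\ref{th:high-entropy-convolution}). It is precisely the exponent $2$ here that, upon iterating over $\sim\log\log r^{-1}$ stages (decomposing $\mu_{\l,p}$ as a convolution of $\mu^{I_j}$ over disjoint intervals $I_j\subset(0,1]$—not self-convolutions), yields a deficit $O(n^{-2})$ at scale $2^{-n}$, hence summability. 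Without an estimate of this form, your iteration cannot reach Garsia's criterion. A companion low-entropy estimate (Theorem~\ref{th:low-entropy-convolution}), fed by the separation bound and the entropy estimate $h_\l\gtrsim\min(\log M_\l,1)$, is what produces the initial ``$0$-HE'' factors and is where the parameter constraint on $\l$ versus $M_\l$ is actually consumed. You should make the dependence between the two convolution theorems and the summability requirement explicit; as written, the proposal's entropy-increment step is a black box that, filled in naively, proves only full dimension, not absolute continuity.
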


Recall that a function $f:\R\to\R$ is in $L\log L$ if
\[
\int |f|\log (|f|+2)dx<\infty.
\]
In this theorem and everywhere in the paper, $M_\l$ stands for the
Mahler measure of an algebraic number $\l$, which is defined as follows.
Let $a(x-z_1)\cdots(x-z_d)$ be the minimal polynomial of $\l$.
Then
\[
M_\l=a\prod_{j:|z_j|>1}|z_j|.
\]

We note that the constant $c$ in the theorem may be taken to be a continuous function of the parameters $p$ and $\e$.

There were only very special explicit examples of absolutely continuous Bernoulli convolutions
known prior to this paper. (See the next section.)
In particular, this theorem gives the first explicit examples of biased (i.e. with $p\neq1/2$)
absolutely continuous Bernoulli
convolutions. 

In the course of the proof of Theorem \ref{th:main}, we keep track of the values of the constants
in a certain special case, which allows us to obtain completely explicit examples.
At this point, we only remark that $\mu_{1-10^{-50},p}$ is absolutely continuous provided $1/4\le p\le3/4$
and defer further discussion about more examples to Section \ref{sc:explicit}.

\subsection{Background}\label{sc:history}

For thorough surveys on Bernoulli convolutions we refer to \cite{60y}
and \cite{Sol-survey}.
For a discussion of the most recent developments, see \cite{Var-ECM}.
We limit this discussion to the case of unbiased Bernoulli convolutions, i.e. we take $p=1/2$
and omit the index $p$ in our notation.

Bernoulli convolutions originate in a paper of Jessen and Wintner \cite{JW-Riemann}
and they have been studied by Erd\H os in \cites{erdos39,erdos}.
If $\l<1/2$, then  $\supp\mu_\l$ is easily seen to be a Cantor set, hence $\mu_\l$ is a singular measure.
If $\l=1/2$, then $\mu_\l$ is the normalized Lebesgue measure restricted to the interval $[-2,2]$.

It has been noticed by Erd\H os \cite{erdos39} that $\mu_\l$ may be singular even if $\l>1/2$.
In particular, he showed that $\mu_\l$ is singular whenever $\l^{-1}$ is a Pisot number --
a real number greater than 1, all of whose Galois conjugates have absolute value strictly
less than $1$.
Pisot numbers have the property that their powers are approximated by integers with exponentially
small error.
This was exploited by Erd\H os to show that the Fourier transform of $\mu_\l$ does
not vanish at infinity.

It is easily seen that $\mu_{\l}=\mu_{\l^k}*\nu$, where $\nu$ is a probability measure
(the law of the random variable $\sum_{n:k\nmid n}\xi_n\l^n$).
Hence $\mu_{2^{-1/k}}$ is a convolution of $\mu_{1/2}$ with another probability measure,
and it is absolutely continuous.
Further explicit examples of absolutely continuous Bernoulli convolutions were
given by Garsia \cite{Garsia-arithmetic}*{Theorem 1.8}, who showed that
$\mu_\l$ is absolutely continuous, whenever $\l^{-1}$ is a real algebraic integer with Mahler measure 2.

The typical behaviour is absolute continuity for parameters in $(1/2,1)$.
Indeed, Erd\H os \cite{erdos} showed that $\mu_\l$ is absolutely continuous for almost all $\l\in(c,1)$,
where $c<1$ is an absolute constant.
This was extended by Solomyak \cite{Sol-Bernoulli} to almost all $\l\in(1/2,1)$.

Beside absolute continuity, another interesting problem is to determine the dimension of $\mu_\l$.
It was proved by Feng and Hu \cite{feng-hu}*{Theorem 2.8} that self-similar measures, and hence
Bernoulli convolutions in particular, are exact dimensional, that is the limit
\[
\lim_{r\to 0}\frac{\log\mu_\l([x-r,x+r])}{\log r}
\]
exists and is constant for $\mu_\l$-almost every $x$.
We call this limit the dimension of $\mu_\l$ and denote it by $\dim \mu_\l$.

Very recently, Hochman \cite{hochman} made a breakthrough on this problem.
He proved that the set of exceptional parameters
\[
\{\l\in(1/2,1):\dim\mu_\l<1\}
\]
is of packing dimension $0$.
Recall that a set of packing dimension $0$ is also a set of Hausdorff dimension $0$.
See \cite{Fal-fractal-geometry}*{Chapter 3.5} for the definition and basic properties
of packing dimension.
Building on this result, Shmerkin \cite{shmerkin} proved that 
\[
\{\l\in(1/2,1):\mu_\l\text{ is singular}\}
\]
is of Hausdorff dimension $0$.

Our aim in this paper is to  obtain results about the absolute continuity of $\mu_\l$ for
specific values of $\l$, in particular when $\l$ is algebraic.
The work of Hochman \cite{hochman} yields a formula for the dimension of $\mu_\l$ when $\l$ is
an algebraic number.
Denote by $h_\l$ the entropy of the random walk on the semigroup generated by the
transformations $x\mapsto \l\cdot x+1$ and $x\mapsto \l\cdot x-1$.
More precisely, let
\[
h_\l=\lim\frac{1}{n}H\Big(\sum_{i=0}^{n-1}\xi_i\l^i\Big),
\]
where $H(\cdot)$ denotes the Shannon entropy of a discrete random variable.
With this notation Hochman's formula is
\[
\dim \mu_\l=\min(-h_\l/\log\l,1).
\]
(See \cite{BV-entropy}*{Section 3.4}, where the formula is derived in this form from Hochman's main result.)

The quantity $h_\l$ has been studied in the paper \cite{BV-entropy}.
It was proved there \cite{BV-entropy}*{Theorem 5} that there is an absolute constant $c_0>0$ such that
for any algebraic number, we have
\[
c_0\cdot \min(\log M_\l,1)\le h_\l\le \min(\log M_\l,1).
\]
The $\log$'s in this formula as well as those that appear in the definition of entropy are
base $2$.
Numerical calculations reported in that paper indicate that one can take $c_0=0.44$.
This result combined with Hochman's formula implies that
$\dim\mu_\l=1$ provided $\l$ is an algebraic number with $1>\l>\min(2,M_\l)^{-1/c_0}$.

\subsection{The strategy of the proof}\label{sc:strategy}

We aim to take a step further and show that $\mu_\l$ is absolutely continuous
provided $\l$ is an algebraic number that satisfies the conditions of Theorem \ref{th:main}.

Unfortunately, the methods of \cite{shmerkin} do not seem to apply for specific parameters,
hence we need a different method.
We follow a strategy similar to Hochman's \cite{hochman} relying on lower bounds for the
entropy of convolution of measures.

We will work with the following notion of entropy.
Let $X$ be a bounded random variable and let $r>0$ be a real number.
We define
\[
H(X;r):=\int_0^1 H(\lfloor X/r+t\rfloor)dt.
\] 
On the right hand side $H(\cdot)$ denotes the Shannon entropy of a discrete random variable.
In addition, we define the conditional entropy
\[
H(X;r_1|r_2):=H(X;r_1)-H(X;r_2).
\] 
We will study the basic properties of these quantities in Section \ref{sc:entropy}.
By abuse of notation, we write $H(\mu;r_1|r_2)=H(X;r_1|r_2)$ and similar expressions
if $\mu$ denotes the law of $X$.

These quantities differ from those used by Hochman in that they involve an averaging
over a random translation.
This averaging endows these quantities with some useful properties as we will see in Section
\ref{sc:entropy-at-scale}, which often come in handy.
The idea of this averaging procedure originates in Wang's paper \cite{Wan-quantitative}*{Section 4.1}.

We fix an algebraic number $\l$ until the end of the section.
For a set $I\subset[0,1]$, we write $\mu^I$
for the law of the random variable
\[
\sum_{n\in\Z_{\ge 0}:\l^n\in I}\xi_n\l^n.
\]

The starting point of the proof is the observation going back at least to
Garsia \cite{Garsia-arithmetic}*{Lemma 1.51}
that any two points in the support of $\mu^{(\l^\ell,1]}$ are at distance at least
$c_\l \ell^{-a} M_\l^{-\ell}$, where $a$ is the number of Galois conjugates of $\l$ on the unit circle.
Taking any $\a<M_\l^{-1}$, this implies that
\[
H(\mu^{(\l^\ell,1]};\a^\ell)= H(\mu^{(\l^\ell,1]})\ge h_\l \cdot \ell
\]
for $\ell$ sufficiently large.
Indeed, the choice of $\a$ guarantees that any two possible values attained by the random variable
\[
\a^{-\ell}\sum_{n=0}^{\ell-1}\xi_n\l^n
\] 
are at distance at least $1$.
Hence taking integer parts does not change its Shannon entropy.

We choose another suitable number $\b>0$ and note the trivial bound
\[
H(\mu^{(\l^\ell,1]};\b^\ell)\le\log\b^{-1}\cdot \ell+C, 
\]
where $C$ is a constant depending on the length of the interval on which $\mu_\l$
is supported.
We make sure that $\log\b^{-1}<h_\l$ and take the difference of 
these bounds. Writing $x=\l^\ell$, we obtain
$H(\mu^{(x,1]};x^{c_1}|x^{c_2})\ge c_3\cdot \log x$, where $c_1$, $c_2$ and $c_3$ are
constants depending on $\a$, $\b$ and $h_\l$.
After rescaling we obtain
\[
H(\mu^{(xy,y]};x^{c_1}y|x^{c_2}y)\ge c_3\cdot \log x.
\]

Then we aim to find a suitable disjoint collection of intervals of the form $I_j=(x_jy_j,x_j]\subset (0,1]$
such that the corresponding intervals $(x_j^{c_1}y_j,x_j^{c_2}y_j]$ ``overlap a lot''.
At this stage we observe that
\[
\mu_\l=\mu^{I_1}*\cdots*\mu^{I_n}*\mu^{(0,1]\backslash \bigcup I_j}
\]
and
invoke some results about the growth of entropy under
convolution, which we formulate now.

Recall that the $\log$ function that appears in the definition of entropy is base $2$.
With this normalization, $1$ is the supremum of the conditional entropy
between two scales $s$ and $2s$ over the set of all probability measures.
The number $1-H(\mu;s|2s)$ measures how uniform $\mu$ is at scale $s$.
Our first result quantifies the following statement:
If two measures are very uniform at a given scale (and also at nearby scales), then their convolution is
even more uniform.

\begin{thm}\label{th:high-entropy-convolution}
There is an absolute constant $C>0$ such that the following holds.
Let $\mu,\wt\mu$ be two compactly supported probability measures on $\R$ and let $0<\a<1/2$ and $r>0$ be real numbers.
Suppose that
\[
H(\mu;s|2s)\ge 1-\a\quad \text{and}\quad H(\wt\mu;s|2s)\ge 1-\a
\]
for all $s$ with $|\log r -\log s|<3\log \a^{-1}$.

Then
\[
H(\mu*\wt\mu;r|2r)\ge 1- C(\log \a^{-1})^3 \a^2.\quad\footnote{One may take $C=10^{8}$ here.}
\]
\end{thm}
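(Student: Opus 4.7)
The plan is to translate the entropy hypothesis into a Fourier decay statement for $\wh\mu$ and $\wh{\wt\mu}$, exploit the identity $\wh{\mu*\wt\mu}=\wh\mu\cdot\wh{\wt\mu}$ to obtain a quadratic gain, and translate back to entropy. Using the formula $H(\mu;s)=h(\mu*K_s)-\log s$, where $K_s$ is a uniform smoothing kernel of scale $s$ and $h$ is differential entropy (this identity follows from unpacking the definition of $H(\mu;s)$ via $P(X\in[u,u+s])$), the hypothesis $H(\mu;s|2s)\ge 1-\a$ is equivalent to $h(\mu*K_{2s})-h(\mu*K_s)\le \a$. A Pinsker/Talagrand-type inequality then yields the $L^2$-closeness $\|\mu*K_s-\mu*K_{2s}\|_2^2\lesssim \a/s$, and by Plancherel, together with the fact that $|\wh K_s-\wh K_{2s}|$ is concentrated on $|\xi|\asymp 1/s$ with size comparable to one there, this gives
\[
\int_{|\xi|\asymp 1/s}|\wh\mu(\xi)|^2\,d\xi \lesssim \a.
\]
Applying this at every dyadic scale $s=2^j r$ within the window $|j|\le 3\log\a^{-1}$ provides this bound on every dyadic frequency shell inside $|\xi|\in[\a^3/r,\a^{-3}/r]$, and similarly for $\wh{\wt\mu}$.

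Next, using $|\wh{\mu*\wt\mu}(\xi)|^2=|\wh\mu(\xi)|^2|\wh{\wt\mu}(\xi)|^2$, I would combine the two bounds on each dyadic shell. Since $|\wh\mu|\le 1$ pointwise, the shell bound $\int|\wh\mu|^2\lesssim\a$ upgrades via Markov to $|\wh\mu|\le \a^{1/2}(\log\a^{-1})^{O(1)}$ off an exceptional sub-shell of small measure; on that exceptional sub-shell, the $L^2$ bound for the companion factor is used after Cauchy--Schwarz. Either way one obtains, on every dyadic shell in the window, $\int_{|\xi|\asymp 1/s}|\wh{\mu*\wt\mu}(\xi)|^2\,d\xi\lesssim \a^2(\log\a^{-1})^{O(1)}$. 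Running the forward argument in reverse at the central scale $s=r$ translates this into $\|(\mu*\wt\mu)*K_r-(\mu*\wt\mu)*K_{2r}\|_2^2\lesssim \a^2(\log\a^{-1})^{O(1)}/r$, and an $L^2$-to-entropy estimate (using the $L^\infty$ bound $\|\mu*K_r\|_\infty\le 1/r$ for both factors) in the opposite direction yields $1-H(\mu*\wt\mu;r|2r)\lesssim \a^2(\log\a^{-1})^{O(1)}$.

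The main obstacle is the bookkeeping in the two entropy-to-$L^2$ conversions, especially the reverse one: recovering an entropy lower bound from $L^2$-smallness of a density perturbation is subtler than classical Pinsker, since Pinsker controls KL divergence from below by the square of the $L^1$ distance rather than of $L^2$. The gap is supplied by the uniform bound $\|\mu*K_r\|_\infty\le 1/r$, which makes Cauchy--Schwarz between $L^1$ and $L^2$ effective. The three logarithmic factors in the final bound $C(\log\a^{-1})^3\a^2$ then arise from the three distinct logarithmic losses in the argument: summing the Fourier $L^2$ bounds over the $\asymp\log\a^{-1}$ many dyadic scales inside the hypothesis window, upgrading each shell's $L^2$ bound to a near-pointwise bound via Markov, and the final $L^2$-to-entropy conversion at scale $r$. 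The exponent $2$ on $\a$ is the essential output of the strategy and comes precisely from the multiplicativity of Fourier transforms under convolution, which is what forces a Fourier-analytic approach rather than a purely entropy-theoretic one.
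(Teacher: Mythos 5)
Your proposal follows a Fourier--analytic route (entropy $\to$ $L^2$ shell bounds on $\wh\mu$, $\wh{\wt\mu}$, multiplicativity of $\wh{\mu*\wt\mu}=\wh\mu\wh{\wt\mu}$, then back to entropy), which is a genuinely different strategy from the paper; the paper works entirely in physical space, reducing to $\Z$-supported measures, decomposing each probability measure on $[1,N]\cap\Z$ as $\mu=f+g$ with $\|f-\chiN\|_2^2\lesssim(\log N-H(\mu))/N$ and $\|g\|_1\lesssim \log N-H(\mu)$ (Lemma~\ref{lm:decomp}), and then treating the four pieces of $(f+g)*(\wt f+\wt g)$ one by one (Lemmas~\ref{lm:L2-conv} and~\ref{lm:L2-L1-conv}), using superadditivity of conditional entropy between scales of integral ratio. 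There is, however, a concrete gap in the middle of your argument that I do not see how to close without importing an ingredient equivalent to that decomposition.

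The gap is in the shell bound $\int_{|\xi|\asymp 1/s}|\wh{\mu*\wt\mu}|^2\lesssim\a^2(\log\a^{-1})^{O(1)}/s$. Suppose you know $\int_{\text{shell}}|\wh\mu|^2\lesssim\a/s$, $\int_{\text{shell}}|\wh{\wt\mu}|^2\lesssim\a/s$, and $|\wh\mu|,|\wh{\wt\mu}|\le1$, and nothing else. Then the conclusion $\int_{\text{shell}}|\wh\mu\wh{\wt\mu}|^2\lesssim\a^2/s$ (even up to $\log$'s) is false as a functional inequality: take $|\wh\mu|=|\wh{\wt\mu}|=1$ on a sub-shell of measure $\a/s$ and $0$ on the rest of the shell; then both hypotheses hold, but $\int_{\text{shell}}|\wh\mu\wh{\wt\mu}|^2=\a/s$, with no square gain. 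Your Markov argument reproduces exactly this obstruction: choosing a threshold $T$, the exceptional set $E_\mu=\{|\wh\mu|^2>T\a\}$ has measure $\le 1/(sT)$, and off $E_\mu\cup E_{\wt\mu}$ the good bound $|\wh\mu\wh{\wt\mu}|^2\le T^2\a^2$ gives a contribution $\lesssim T^2\a^2/s$, but on $E_\mu$ one can only say $\int_{E_\mu}|\wh\mu\wh{\wt\mu}|^2\le\min(\a/s,\;1/(sT))$, and for $T$ of size $(\log\a^{-1})^{O(1)}$ this is $(\log\a^{-1})^{-O(1)}/s$, which for small $\a$ dwarfs the target $\a^2(\log\a^{-1})^{O(1)}/s$. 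Taking $T\asymp\a^{-1}$ shrinks the exceptional set to measure $\asymp\a/s$, but then the pointwise bound $|\wh\mu|^2\le T\a=1$ off $E_\mu$ is vacuous. Cauchy--Schwarz on the exceptional set gives $\int_{E_\mu}|\wh\mu|^2|\wh{\wt\mu}|^2\le(\int_{E_\mu}|\wh\mu|^4)^{1/2}(\int_{E_\mu}|\wh{\wt\mu}|^4)^{1/2}\le(\a/s)^{1/2}(\a/s)^{1/2}=\a/s$---still only $\a/s$. To break through, you would need pointwise control of $\wh\mu$ everywhere on the shell (as opposed to off a small exceptional set), which is precisely what the split $\mu=f+g$ provides in Fourier: $|\wh g|\le\|g\|_1\lesssim\a$ \emph{pointwise}, while $\wh f$ is close to the transform of a smooth bump. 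Your proposal has no such structural ingredient, and its absence is what breaks the argument.

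A secondary (repairable) issue is in the forward entropy$\to L^2$ conversion. Classical Pinsker gives total variation, i.e. an $L^1$ bound $\|p-q\|_1^2\lesssim\a$ from $h(q)-h(p)\le\a$; interpolating $\|p-q\|_2^2\le\|p-q\|_\infty\|p-q\|_1\lesssim\a^{1/2}/s$ only yields $\a^{1/2}/s$, not the $\a/s$ you claim. The correct route uses the bounded likelihood ratio: with $q=\tfrac12(p+p_s)$ one has $p/q\le2$, so the second-order (chi-squared) expansion of $D(p\|q)$ is legitimate and $D(p\|q)\gtrsim\chi^2(p\|q)\ge 2s\|p-q\|_2^2$. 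Combined with $h(q)-h(p)=\tfrac12D(p\|q)+\tfrac12D(p_s\|q)$ (Jensen--Shannon identity), this does give $\|p-q\|_2^2\lesssim\a/s$. I flag this only because it shows how delicate these conversions are: the exponent in the final theorem depends on getting the right power in each of them, and "Pinsker/Talagrand-type" is not quite the right tool here.
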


Our second result complements the first one.
We consider a probability measure that is uniform only at a small fraction of scales
and estimate how much its entropy grows when we convolve it with another
measure that has at least some small amount of entropy.

We write $\cN_1(X)$ for the maximal cardinality of a
collection of points in a set $X\subset \R$ that are of distance at least $1$ from each other.

\begin{thm}\label{th:low-entropy-convolution}
For every $0<\a<1/2$, there is a number $c>0$ such that the following holds.
Let $\mu,\nu$ be two compactly supported probability measures on $\R$.
Let  $\s_2<\s_1<0$ and $0< \b\le1/2$ be real numbers.
Suppose that 
\[
\cN_1\{ \s\in[\s_2,\s_1]:H(\mu;2^\s|2^{\s+1})>1-\a\}<c\b(\s_1-\s_2).
\quad\footnote{One may take $c=1/(1000\log\a^{-1})$ here.}
\]
Suppose further that
\[
H(\nu; 2^{\s_2}|2^{\s_1})>\b(\s_1-\s_2).
\]

Then
\[
H(\mu*\nu; 2^{\s_2}|2^{\s_1})> H(\mu;2^{\s_2}|2^{\s_1})+c\b(\log\b^{-1})^{-1}(\s_1-\s_2)-3.
\quad\footnote{One may take $c=\a/(10^7\log\a^{-1})$ here.}
\]
\end{thm}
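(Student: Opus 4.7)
The plan is to convert the two hypotheses into a scale-by-scale budget and exploit their asymmetry. At all but a small set of scales the hypothesis on $\mu$ provides at least $\a$ units of ``room'' below the maximum conditional entropy $1$, while the hypothesis on $\nu$ supplies a total of $\b(\s_1-\s_2)$ units of entropy across the same range. Each unit of $\nu$-entropy landing at a scale where $\mu$ has room should, after convolution, produce roughly $\a$ extra units of $H(\mu*\nu;\cdot|\cdot)$; summing over scales should give the claimed bound, with the two $\log$ losses arising from (i) the need to control $\mu$ on a window of nearby scales around each working scale and (ii) a dyadic pigeonhole on the size of $\nu$'s per-scale entropy.

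Concretely, I would first use the basic manipulations of $H(\cdot;r)$ from Section~\ref{sc:entropy} to rewrite
\[
H(\mu*\nu;2^{\s_2}|2^{\s_1})-H(\mu;2^{\s_2}|2^{\s_1})
\]
as an integral over $\s\in[\s_2,\s_1]$ of the per-scale gain $H(\mu*\nu;2^{\s}|2^{\s+1})-H(\mu;2^{\s}|2^{\s+1})$, absorbing boundary fluctuations into the additive $-3$. Next, let $B\subset[\s_2,\s_1]$ be the set of \emph{bad-for-$\mu$} scales where $H(\mu;2^\s|2^{\s+1})\le 1-\a$. By hypothesis $\cN_1([\s_2,\s_1]\setminus B)<c\b(\s_1-\s_2)$, so combining the trivial per-scale bound $H(\nu;\cdot|\cdot)\le 1$ with the lower bound on $H(\nu;2^{\s_2}|2^{\s_1})$ gives
\[
\int_B H(\nu;2^\s|2^{\s+1})\,d\s \;\ge\; (1-c)\b(\s_1-\s_2).
\]
A dyadic pigeonhole partitions $B=\bigsqcup_{j=0}^{J}B_j$ with $B_j=\{\s\in B:2^{-j-1}<H(\nu;2^\s|2^{\s+1})\le 2^{-j}\}$ and $J\asymp\log\b^{-1}$; some index $j^*$ then satisfies $\cN_1(B_{j^*})\cdot 2^{-j^*}\gtrsim \b(\s_1-\s_2)/\log\b^{-1}$.

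The crucial ingredient is a single-scale convolution-gain estimate — a low-entropy analogue of Theorem~\ref{th:high-entropy-convolution}, presumably established in the same section — of the rough shape: if $H(\mu;s|2s)\le 1-\a$ on a window of nearby scales of length $\asymp\log\a^{-1}$ and $H(\nu;s|2s)\ge\g$, then
\[
H(\mu*\nu;s|2s)\ge H(\mu;s|2s)+c_0\a\g.
\]
Applied with $\g\asymp 2^{-j^*}$ at the scales in $B_{j^*}$ and integrated, this produces a total gain of order $\a\b(\s_1-\s_2)/(\log\a^{-1}\log\b^{-1})$, matching the advertised constant after the bad/good classification is refined so that each working scale sits inside a window of bad scales (this is where the factor $1/(1000\log\a^{-1})$ in the hypothesis on $\mu$ is consumed).

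The main obstacle is this single-scale estimate. Keeping the dependence on $\a$ linear, rather than $\a^2$ or worse, is delicate and should rest essentially on the averaging built into $H(\cdot;r)$ — the mechanism Wang~\cite{Wan-quantitative} exploits in a related context — combined with a Berg/Pl\"unnecke-style inequality for sums of random variables. The remaining bookkeeping (the bad/good split, the dyadic pigeonhole, telescoping of conditional entropies, and the passage from a per-scale gain at a single scale to an integrated gain over $B_{j^*}$) is comparatively routine once the single-scale input is in hand.
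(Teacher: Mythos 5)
Your strategy is plausible at a high level, and one piece of your bookkeeping -- summing per-scale gains of roughly $\a$ times the amount of $\nu$-entropy allocated to that scale -- does match the paper's arithmetic. But the central ingredient you postulate, the ``single-scale convolution-gain estimate'' of the form
\[
H(\nu;s|2s)\ge\gamma \;\Longrightarrow\; H(\mu*\nu;s|2s)\ge H(\mu;s|2s)+c_0\a\gamma,
\]
is actually false, not merely hard to prove. The hypothesis $H(\nu;s|2s)\ge\gamma$ does not certify that $\nu$ has geometric structure at scale $\sim s$. For instance, take $\nu$ uniform on pairs of points $\{10ks,\,10ks+\epsilon s\}$, $k=0,\ldots,N-1$, with $\epsilon$ tiny. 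A direct computation gives $H(\nu;s|2s)\approx\epsilon/2>0$, yet all of that entropy originates from the microscopic pairs at distance $\epsilon s$, not from anything at scale $s$. Convolving $\mu$ with this $\nu$ amounts to $\tfrac12(\mu*\nu_1+\mu*\nu_1*\d_{\epsilon s})$ where $\nu_1$ is uniform on $10s\Z$, and since the separation $10s$ is too coarse and $\epsilon s$ is too fine, neither contributes entropy on the range $[s,2s]$. In particular, if $\epsilon\ll\a^3$, then the Bernoulli at distance $\epsilon s$ lies entirely outside the window controllable by Proposition \ref{pr:conv-by-Bernoulli} (whose $r_1/r_2\gtrsim\a^{-3}$ bound is tight), so even with $H(\mu;t|2t)\le1-\a$ on a window of length $O(\log\a^{-1})$ around $s$, you get no gain at scale $[s,2s]$.

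The paper's mechanism is correspondingly different: the relevant structural condition on $\nu$ at a scale $r$ is not $H(\nu;r|2r)\ge\gamma$ alone but the ratio inequality $H(\nu;r/2|r)\le 1.5\,H(\nu;r|2r)$. By Lemma \ref{lm:bernoulli-decompose} (``no pair at distance $\sim r$'' forces $H(\nu;r/2|r)=2H(\nu;r|2r)$), the ratio condition certifies that the support of $\nu$ really does contain pairs at distance in $[r/2,2r]$, which can then be peeled off as Bernoulli factors via Proposition \ref{pr:bernoulli-decompose}, and it is those Bernoulli factors that Proposition \ref{pr:conv-by-Bernoulli} converts into a convolution gain. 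Your dyadic pigeonhole on the size of $H(\nu;2^\s|2^{\s+1})$ (into $B_j$) should therefore be replaced by the geometric-series argument the paper actually uses: between consecutive scales where the ratio condition holds, $H(\nu;2^n|2^{n+1})$ decays by factors of $1.5$, so almost all of $\nu$'s total conditional entropy accrues at the ratio-good scales, from which a $2K$-separated, thresholded subfamily is then extracted (Lemma \ref{lm:low-entropy-proof}). Without that replacement, your argument cannot be made to close.
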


The aim of this procedure is to show that if $\l$ satisfies the hypothesis in Theorem \ref{th:main}, then
\begin{equation}\label{eq:decay0}
H(\mu_\l;2^{-n}|2^{-n+1})\ge 1-C n^{-2}
\end{equation}
for some constant $C$ depending only on $\l$.
Summing these inequalities, we find that $H(\mu_{\l};2^{-n})\ge n-C$
(for some other constant $C$), which will be enough to conclude that $\mu_\l$ is absolutely
continuous.

This strategy is very similar to the one pursued by Hochman in \cite{hochman}, however,
there are very crucial differences in the details.
In comparison, Hochman's argument prove (under milder conditions)
\[
H(\mu_\l;2^{-n}|2^{-n+1})\to 1,
\]
which is enough to conclude that $\dim\mu_\l=1$.
One of the new contributions of this paper enabling the estimate on the speed of convergence
in \eqref{eq:decay0}
are the new quantitative estimates in Theorems \ref{th:high-entropy-convolution}
and \ref{th:low-entropy-convolution} for the growth of entropy under convolution.
Theorem \ref{th:low-entropy-convolution} is closely related to \cite{hochman}*{Theorem 2.7},
but the crucial difference is that the entropy growth is quantified in terms of the parameters in
Theorem \ref{th:low-entropy-convolution}.
In addition, the proofs of Theorems  \ref{th:high-entropy-convolution}
and \ref{th:low-entropy-convolution} follow a different strategy.
One of the new features of our proofs is that we estimate the entropy of the
convolution of two measures directly, which brings significant quantitative improvements.
This is in contrast with \cite{hochman}
(and \cite{Bou-discretized2}, where the size of sum-sets is estimated instead of the entropy of convolutions),
where the entropy of the convolution product of a large number of measures is estimated first
and then Pl\"unnecke-type inequalities are used.

The way the measure $\mu_\l$ is decomposed as a convolution of measures is also
new and has been optimized to achieve fast speed of convergence.

Theorem \ref{th:high-entropy-convolution} says that the missing entropy of a convolution
is the square (up to a logarithmic loss) of the missing entropy of the factors.
The exponent $2$ here is optimal, and crucial for the success of our strategy, because
it yields a polynomial decay in \eqref{eq:decay0}.

Indeed, the following simple example shows that both Theorems \ref{th:high-entropy-convolution}
and \ref{th:low-entropy-convolution} are optimal up to logarithmic factors.
Consider the measures $\mu=(1-\a)\chi_{[0,1]}+\a\d_0$ and $\nu=(1-\b)\chi_{[0,1]}+\b\d_0$,
where $\chi_{[0,1]}$ is the Lebesgue measure restricted to the interval $[0,1]$ and $\d_0$ is
the unit mass supported at $0$.
We leave it to the reader to verify that
\begin{align*}
\lim_{r\to0}H(\mu;r|2r)=&1-\a,\quad\lim_{r\to0}H(\nu;r|2r)=1-\b,\\
\lim_{r\to0}H(\mu*\nu;r|2r)=&1-\a\b.
\end{align*}

However, Theorem \ref{th:high-entropy-convolution} is useful only when the missing entropy is
smaller than a very small absolute constant.
We need to use Theorem \ref{th:low-entropy-convolution} when the measures have large missing
entropy on most levels.
The quantitative aspects of this theorem is responsible for the constraints we need to impose in
Theorem \ref{th:main} on $\l$ to obtain the exponent $2$ on the right hand side of \eqref{eq:decay0}.

Theorem \ref{th:low-entropy-convolution} is also well adapted for studying the set
of parameters $\l$ such that $\dim\mu_\l=1$.
If $\l$ is such that $\dim\mu_\l<1$, then $H(\mu^I;r|2r)<1-\a$ for all $I\subset \R_{>0}$
and $r\in\R_{>0}$, where $\a>0$ is a number depending on $\l$ but which is independent of
$I$ and $r$.
This means that the hypotheses of Theorem \ref{th:low-entropy-convolution} hold
for these measures.
This fact is proved in the
forthcoming paper \cite{BV-transcendent} of Breuillard and the author,
which aims at studying the set of $\l$'s where $\mu_\lambda$ has full dimension.

\subsection{Examples}\label{sc:explicit}

The constant $c$ in Theorem \ref{th:main} is effective, that is, it can be computed
explicitly by first obtaining an explicit value for the constant in \cite{BV-entropy}*{Theorem 5}
and then by following the steps of the proof
and substituting the values of the constants with explicit values throughout the calculations
in this paper.
The calculations in this paper pose no difficulty in this regard, and an explicit bound for the constant
in  \cite{BV-entropy}*{Theorem 5} only requires a lower bound for the integral
\begin{equation}\label{eq:numeric}
\int_\R g_1(x) \log g_1(x) dx - \int_\R g_{\sqrt 2}(x) \log g_{\sqrt 2}(x) dx, 
\end{equation}
where
\[
g_j(x)= p g(x+j)+(1-p) g(x-j)
\]
and $g(x)$ is the density function of the standard Gaussian random variable.
\footnote{Indeed, this integral gives a lower bound for $\Phi_{\nu_0}(\sqrt 2)$
as defined in \cite{BV-entropy}*{Section 3.2}, which in turn gives a lower bound
for $\Phi_{\nu_0}(a)$ for $a\ge\sqrt 2$, because of the monotonicity of $\Phi_{\nu_0}$
proved in \cite{BV-entropy}*{Lemma 14}.
The constant in \cite{BV-entropy}*{Theorem 5} is $\min_{\sqrt 2\le a\le 2}\Phi_{\nu_0}(a)/\log (a)$
due to \cite{BV-entropy}*{Corollary 15}.}

The integral \eqref{eq:numeric} can be estimated numerically using a computer, at least for a fixed value of $p$.
We do not pursue this, but in order to obtain explicit examples of absolutely continuous Bernoulli
convolutions, we keep track of the constants in the special case when $\l$ is not a root of any polynomial
with coefficients $-1$, $0$ and $1$.
In this special case, \cite{BV-entropy}*{Theorem 5} is not required for the proof of Theorem \ref{th:main}.
This assumption restricts generality, but it holds in many cases, e.g. when $\l$ is not a unit
(i.e. the leading or the constant coefficient of its minimal polynomial is other than $\pm1$),
or when $\l$ has a Galois conjugate of modulus $>2$ or $<1/2$.

If $\l$ is not a root of a polynomial with  coefficients $-1$, $0$ and $1$ and $1/4\le p\le 3/4$, then the proof of Theorem
\ref{th:main} yields that $\mu_{\l,p}$ is absolutely continuous
provided
\begin{equation}\label{eq:explicit-condition}
\l>1-10^{-37}(\log(M_\l+1))^{-1}(\log\log( M_\l+2))^{-3}.
\end{equation}
We give the proof of Theorem \ref{th:main} with inexplicit constants, but we keep track of the values
of the various constants and parameters in footnotes making no efforts at optimization.
A reader not interested in the explicit value of the constant $c$ in Theorem \ref{th:main} may
ignore these footnotes.

The problem of determining the relationship between the Mahler measure
and how close to $1$ an algebraic number can be has a rich literature, see Section 4.14 in \cite{Smy-Mahler-survey}.
We now discuss a couple of simple constructions that allow us to find examples when
Theorem \ref{th:main} applies.

\subsubsection{Rational numbers}

If $\l=1-a/b$ is a rational number for some integers $a,b\in\Z_{>0}$, then $M_\l\le b$.
(If $a$ and $b$ are coprime, then $M_\l=b$.)
This means that $\mu_{1-a/b,p}$ is absolutely continuous provided $1/4\le p\le 3/4$ and
\[
0<a<10^{-37}\frac{b}{\log (b+1)(\log\log (b+2))^3}.
\]

\subsubsection{High degree roots of algebraic numbers}

Let $\l\in(0,1)$ be an algebraic number and let $k\in\Z_{>0}$.
Then
\begin{equation}\label{eq:kthroot}
M_{\l^{1/k}}= M_\l^{\deg(\l^{1/k})/k\deg(\l)},
\end{equation}
where $\deg(x)$ denotes the degree of the number field $\Q(x)$.
Indeed, $\ln M_x=\deg(x) h(x)$ for any algebraic number $x$, where
$h(x)$ denotes the absolute logarithmic height of $x$, see \cite{BG-heights}*{Proposition 1.6.6},
and $h(x^a)=|a|h(x)$ for any $a\in\Q$, see \cite{BG-heights}*{Lemma 1.5.18}.

Clearly $\deg(\l^{1/k})\le k\deg(\l)$, hence $M_{\l^{1/k}}\le M_\l$ always.

We can also get a lower bound.
Let $\l_1=\l^{1/k},\l_2,\ldots,\l_d$ be the roots of the minimal polynomial of $\l^{1/k}$ over the field $\Q(\l)$.
Then $d=\deg(\l^{1/k})/\deg(\l)$.
We clearly have $\l_j^k=\l$ for any $j=1,\ldots, d$, hence $\l_j=\l^{1/k}\zeta_j$, where $\zeta_j$ is a root of unity
and
\[
\l_1\cdots\l_d=\l^{d/k}\cdot \zeta_2\cdots\zeta_d\in \Q(\l).
\]
Now $ \zeta_2\cdots\zeta_d$ is a root of unity, and it is real, since both $\l^{d/k}$ and $\Q(\l)$ are real.
Then $ \zeta_2\cdots\zeta_d=\pm1$, hence $\l^{d/k}\in\Q(\l)$.
Applying \eqref{eq:kthroot} with $\l^{d/k}$ in place of $\l$  and $d$ in place of $k$, we get
\begin{align*}
M_{\l^{1/k}}=&M_{\l^{d/k}}^{\deg(\l^{1/k})/d\deg(\l^{d/k})}
=M_{\l^{d/k}}^{\deg(\l)/\deg(\l^{d/k})}\\
\ge& M_{\l^{d/k}}\ge \min_{x\in\Q(\l): M_x>1} M_x>1.
\end{align*}
The existence of the minimum follows from Northcott's theorem \cite{BG-heights}*{Theorem 1.6.8}.
An effective bound for the minimum in terms of $\deg(\l)$ can be obtained from  
\cite{dobrowolski}*{Theorem 1}.

This shows that $M_{\l^{1/k}}$ stays bounded away from both $1$ and $\infty$, however,
$\l^{1/k}\to 1$ as $k$ grows.
By Theorem \ref{th:main}, we see that $\mu_{\l^{1/k}, p}$ is absolutely continuous
for any fixed algebraic $\l\in(0,1)$ and for any $p\in(0,1)$ provided $k$ is sufficiently large
depending only on $\l$ and $p$.

If $\l$ is not a unit, then the explicit bound \eqref{eq:explicit-condition} can be applied.
In particular, we have that $\mu_{n^{-1/k},p}$ is absolutely continuous for any integers $n,k\in\Z_{>0}$
provided $p\in[1/4,3/4]$ and
\[
n^{-1/k}>1-10^{-37}(\log (n+1))^{-1}(\log\log (n+2))^{-3}.
\]
Using $n^{-1/k}>1-\ln(n)/k$ we can rewrite the above condition as
\[
k>10^{37}\ln(n)\log(n+1)(\log\log (n+2))^{3}.
\]

\subsubsection{Polynomials with few non-zero coefficients}

There are good bounds on the Mahler measure in terms of the coefficients of the minimal polynomial.
Let $\l$ be an algebraic number with minimal polynomial $P(x)=a_dx^d+\ldots+a_0\in\Z[x]$.
For $q\ge 1$, we put $\ell_q(P)=(|a_d|^q+\ldots+|a_0|^q)^{1/q}$ and $\ell_\infty=\max(|a_d|,\ldots,|a_0|)$.
Then we have the bounds \cite{BG-heights}*{Lemma 1.6.7}.
\begin{equation}\label{eq:Mahler-upper}
M_\l\le \ell_2(P)\le \min(\ell_1(P),(d+1)^{1/2}\ell_{\infty}(P))
\end{equation}

This means that it is easy to construct families of polynomials whose Mahler measure stays bounded;
indeed, this holds if the polynomial has a bounded number of non-zero coefficients
that are also bounded.

We describe one possible method to construct such polynomials with roots near $1$.
Fix a polynomial $Q$ of degree $d$ such that its coefficients satisfy $2|a_j$ for all $j$ but $4\nmid a_0$ and $Q(1)<0$.
Let $n$ be a large integer and let $x_0>1$ be a root of the polynomial $x^n+Q(x)$. 
By the Eisenstein criterion, this is an irreducible polynomial, and by $|a_0|>1$, $x_0$
is not a unit, hence \eqref{eq:explicit-condition} can be applied for $\l=x_0^{-1}$.

We estimate $1-\l^{-1}<x_0-1$.
To this end, we write
\[
0=x_0^n + Q(x_0)\ge x_0^n-x_0^d\ell_1(Q),
\]
hence $x_0\le \ell_1(Q)^{1/(n-d)}$.
In light of \eqref{eq:explicit-condition}
$\mu_{\l,p}$ is absolutely continuous, provided $p\in[1/4,3/4]$ and $n$
is sufficiently large so that
\[
\ell_1(Q)^{1/(n-d)}<1+10^{-37}(\log(\ell_1(Q)+1)\log\log(\ell_1(Q)+2))^{-3}.
\]

The restrictive hypothesis on the coefficients of $Q$ is not necessary, it is only required
to ensure that the polynomial is irreducible and \eqref{eq:explicit-condition} can be applied.
However, the upper bound \eqref{eq:Mahler-upper} is still valid for any root $\l$ of $P$ even
if $P$ is not irreducible.
A general lower bound
\[
M_\l>1+\frac{1}{1200}\Big(\frac{\log\log n}{\log n}\Big)^3
\]
due to Dobrowolski \cite{dobrowolski}*{Theorem 1} is available for any algebraic number of degree at most $n$ that is not a root of unity.
Since $\frac{1}{1200}\Big(\frac{\log\log n}{\log n}\Big)^3$ decreases much slower than $\ell_1(Q)^{1/(n-d)}-1$ as $n$ grows,
Theorem \ref{th:main} implies that $\mu_{\l,p}$ is absolutely continuous if $n$ is sufficiently large for any fixed polynomial $Q$
and fixed $p\in(0,1)$
assuming only that $Q(1)<0$.

\subsubsection{More general considerations}

Since the right hand side of \eqref{eq:explicit-condition} approaches $1$ very slowly as $M_\l$ grows,
it is not necessary to restrict our attention to examples with bounded Mahler measure.
Indeed, for the Mahler measure of a polynomial $P$, we have
\begin{equation}\label{eq:lge1}
M_P\ge \prod_{\l:P(\l)=0,|\l|>1} |\l|.
\end{equation}
(We have equality if $P$ is monic.)
Hence a ``typical'' root $\l$ of $P$ is expected to satisfy $|\l|<M_P^{c/\deg(P)}$,
which approaches $1$ much more rapidly than the right hand side of \eqref{eq:explicit-condition}.
Of course, a ``typical'' root is not expected to be real, but the following construction can be used
to find more examples, for which the theorem applies.

Let $P\in\Z[x]$ be an irreducible polynomial that is not reciprocal, i.e. $a_{d-i}\neq a_i$ for at least one $i$,
where $d=\deg(P)$ and $a_i$ are the coefficients of $P$.
Such polynomials can be found in abundance using Eisenstein's criterion.
These assumptions imply that $P$  has no roots on the unit circle.
For simplicity, assume that $P$ has at least $d/2$ roots outside the unit circle, otherwise simply
replace $P$ by $x^dP(x^{-1})$.

By \eqref{eq:lge1}, there is a root $x_0$ of $P$ such that
$1<|x_0|<M_P^{2/d}$.
We take $\l=|x_0|^{-2}$ and observe that
\[
\l>1-4\ln(M_P)/d.
\]
It is easy to see from the definition that $M_\l\le M_P^2$, hence $\mu_{\l,p}$ is absolutely continuous
provided $1/4\le p \le3/4$ and
\[
4\ln(M_P)/d<10^{-37}(\log(M_P^2+1))^{-1}(\log\log(M_P^2+2))^{-3}.
\]
It is straightforward to find such polynomials using \eqref{eq:Mahler-upper}.

\subsection{Notation}

We denote by the letters $c$, $C$ and their indexed variants various constants
that could in principle be computed explicitly following the proof step by step.
The value of these constants denoted by the same symbol may change between occurrences.
We keep the convention that we denote by lower case letters the constants that are
best thought of as ``small'' and by capital letters the ones that are ``large''.

We keep track of the values of these constants in footnotes.
A reader not interested in the explicit values of these constants may choose to ignore these footnotes.
A footnote of the form $C_j=x$ means that the constant $C$ in the line where the footnote points
should be substituted by $C_j$ and its value can be taken $x$.
This is usually followed by an explanation or detailed calculation.
The constants are indexed in a manner that ensures that the value of the constant with index $j$ may
depend only on the constants with index less than $j$.
In Section \ref{sc:LER} the values given for the constants are valid under the additional hypothesis
that $\l$ is not the root of a polynomial with coefficients $-1$, $0$ and $1$ and $1/4\le p\le 3/4$.
In other parts of the paper, no hypothesis is required beyond those stated in the main body of the text.

We denote by $\log$ the base $2$ logarithm and write $\ln$ for the  logarithm in base $e$.

\subsection{The organization of this paper}

We begin by discussing some basic properties of entropy in Section \ref{sc:entropy}, which
we will rely on throughout the paper.
Sections \ref{sc:proof-high-ent-conv} and \ref{sc:low-entropy-convolution} are devoted to the proofs
of Theorems \ref{th:high-entropy-convolution} and \ref{th:low-entropy-convolution}
respectively.
We conclude the paper in Section \ref{sc:proof}
by explaining the details of the argument outlined above to prove
Theorem \ref{th:main}.

\subsection*{Acknowledgment}

I am indebted to Elon Lindenstrauss with whom we discussed entropy increases under convolutions
\cite{LV-entropy-sum-product} for several years in connection with Bourgain's discretized sum product theorem.
I am also indebted to Emmanuel Breuillard with whom we studied the quantity $h_\l$ in
\cite{BV-entropy}.
These works have been a rich source of inspiration for this project.
I am grateful to Mike Hochman for useful discussions, and in particular, for suggesting
to consider biased Bernoulli convolutions.

I am grateful to the anonymous referees for carefully reading my paper and for numerous
helpful remarks and suggestions that greatly improved the presentation.

\section{Basic properties of entropy}\label{sc:entropy}

The purpose of this section is to provide some background material on entropy.

\subsection{Shannon and differential entropies}
If $X$ is a discrete random variable, we write $H(X)$ for its Shannon entropy, that is
\[
H(X)=\sum_{x\in\cX} -\P(X=x)\log\P(X=x),
\]
where $\cX$ denotes the set of values $X$ takes.
We recall that the base of $\log$ is $2$ throughout the paper.
If $X$ is an absolutely continuous random variable with density $f:\R\to\R_{\ge0}$, we write
$H(X)$ for its differential entropy, that is
\[
H(X)=\int -f(x)\log f(x) dx.
\]
This dual use for $H(\cdot)$ should cause no confusion, as it will be always clear from the context,
what the type of the random variable is.
If $\mu$ is a probability measure, we write $H(\mu)=H(X)$, where $X$ is a random variable with law
$\mu$.

Shannon entropy is always non-negative.
Differential entropy on the other hand can take negative values.
For example, if $a \in\R_{>0}$, and $X$ is a random variable
with finite differential entropy $H(X)$, then it follows from the change of variables formula that
\begin{equation}\label{change}
H(aX) = H(X) + \log a,
\end{equation}
which can take negative values when $a$ varies.
On the other hand, if $X$ takes countably many values, the Shannon entropy of $aX$ is the same as that of $X$.
Note that both kinds of entropy are invariant under translation by a constant in $\R$,
that is $H(X)=H(X+a)$.

We define $F(x):=-x\log (x)$ for $x>0$ and recall that
$F$ is concave,
and it is sub-additive, i.e. $F(x+y) \leq F(x)+F(y)$, and it also
satisfies the identity $F(xy)=xF(y)+yF(x)$.

{}From the concavity of $F$ and Jensen's inequality,
we see that for any discrete random variable $X$ taking at most $N$ possible different values,
\begin{equation}\label{support}
H(X) \leq \log N.
\end{equation}

Let now $X$ and $Y$ be two independent random variables in $\R$.
If both are discrete, it follows immediately from the identity $F(xy)=xF(y)+yF(x)$
and the sub-additivity of $F(x)$
that $H(X+Y) \leq H(X) + H(Y)$ for  Shannon entropy.
This is no longer true for differential entropy since the formula is not invariant under a linear change of variable.
However if $X$ is atomic and bounded, while $Y$ is assumed absolutely continuous, then
\begin{equation}\label{subadd}
H(X+Y) \leq H(X) + H(Y),
\end{equation}
where $H(X)$ is Shannon's entropy and the other two are differential entropies.
To see this,  note that if $f(y)$ is the density of $Y$,
then the density of $X+Y$ is $\E(f(y-X))= \sum_i p_if(y-x_i)$, hence:
\begin{align*}
H(X+Y)&= \int F\Big(\sum_i p_if(y-x_i)\Big) dy\\
&\leq \int \sum_i F(p_i f(y-x_i)) dy\\
&= \int \sum_i F(p_i) f(y-x_i) dy + \int \sum_i p_i F(f(y-x_i)) dy \\
&= \sum_i F(p_i) + \int  F(f(y)) dy = H(X) + H(Y)
\end{align*}
and $(\ref{subadd})$ follows.

In the other direction, we always have the lower bound
\begin{equation}\label{lowerbound}
H(X+Y) \geq \max(H(X),H(Y))
\end{equation}
if all three entropies are of the same type (i.e. either Shannon or differential), as follows easily from the concavity of $F$.

Let $X$ and $Y$ be two discrete random variables.
We define the conditional entropy of $X$ relative to $Y$ as
\begin{align*}
H(X|Y)=&\sum_{y\in\cY} \P(Y=y) H(X|Y=y)\\
=&\sum_{y\in\cY} \P(Y=y) \sum_{x\in\cX} -\frac{\P(X=x,Y=y)}{\P(Y=y)}\log\frac{\P(X=x,Y=y)}{\P(Y=y)}.
\end{align*}
We recall some well-known properties.
We always have $0\le H(X|Y)\le H(X)$, and $H(X|Y)=H(X)$ if and only if the two random variables are
independent (see \cite{cover-thomas}*{Theorem 2.6.5}).
The entropy of the joint law can be expressed as
\[
H(X,Y)=H(X|Y)+H(Y)\le H(X)+H(Y).
\]
If $f$ is any function defined on $\cX$, we have $H(f(X)|X)=0$ as seen from the definition.
This implies $H(f(X))\le H(X)$.
In particular, taking $f(x,y)=x+y$ and applying the above inequality for the joint distribution
of the random variables $X$ and $Y-X$, we obtain
\[
H(Y)\le H(X,Y-X)\le H(X)+H(Y-X).
\]
By reversing the roles of $X$ and $Y$ we get
\begin{equation}\label{eq:entropy-difference}
|H(X)-H(Y)|\le H(Y-X).
\end{equation}

We also note the identity
\[
H(X|f(X))=H(X)-H(f(X)).
\]
We will use this repeatedly in what follows.

We recall the following result from \cite{madiman}*{Theorem I}.

\begin{prp}[Submodularity inequality]
\label{ruzsa}
Assume that $X,Y,Z$ are three independent $\R$-valued random variables
such that the distributions of $Y$, $X+Y$, $Y+Z$ and $X+Y+Z$ are absolutely continuous with respect
to Lebesgue measure and have finite differential entropy.
Then
\begin{equation}\label{ruzsaineq}
H(X+Y+Z) + H(Y) \leq H(X+Y) + H(Y+Z).
\end{equation}
\end{prp}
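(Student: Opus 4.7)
The plan is to prove the inequality by introducing the auxiliary random variables $U := X+Y$, $V := Y+Z$, $W := X+Y+Z$, and combining two change-of-variable identities with the standard submodularity of entropy (i.e.\ the inequality that conditioning reduces entropy).

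First I would record the identity $h(U, W) = h(U) + h(Z)$: since $W = U + Z$ with $U$ and $Z$ independent, the shear $(U, Z) \mapsto (U, U+Z)$ is a unimodular linear bijection of $\R^2$, so it preserves differential entropy, and independence then gives the factorization. Symmetrically, $h(V, W) = h(V) + h(X)$. The analogous change of variables applied to the unimodular linear map $(X, Y, Z) \mapsto (U, V, W)$ gives $h(U, V, W) = h(X) + h(Y) + h(Z)$.

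Next I would invoke the standard submodularity inequality
\[
h(A, B, C) + h(B) \leq h(A, B) + h(B, C),
\]
which is just the inequality $h(A \mid B, C) \leq h(A \mid B)$ rearranged via the chain rule. Specializing to $(A, B, C) = (U, W, V)$ and substituting the three identities above, the terms $h(X)$ and $h(Z)$ cancel on both sides, leaving exactly
\[
h(Y) + h(W) \leq h(U) + h(V),
\]
which is \eqref{ruzsaineq}.

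The one subtlety is finiteness: the hypothesis provides absolute continuity and finite differential entropy only for $Y$, $U$, $V$ and $W$, not for $X$ or $Z$ individually, so the intermediate entropies $h(X)$, $h(Z)$, $h(U, W)$, $h(V, W)$, $h(U, V, W)$ may a priori fail to be well-defined differential entropies on $\R^2$ or $\R^3$. I would handle this either by an approximation argument, replacing $X$ and $Z$ by $X + \e N$ and $Z + \e N'$ with independent standard Gaussians and letting $\e \to 0$ using lower semicontinuity of the relevant entropies, or by recasting the entire derivation in terms of conditional entropies given $Y$, exploiting the conditional independence $U \perp V \mid Y$ that is the probabilistic heart of the proof. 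I expect the only real obstacle to be this technical bookkeeping; the algebraic content is the three change-of-variable identities together with submodularity.
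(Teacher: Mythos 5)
Your algebraic skeleton — the three unimodular change-of-variable identities combined with submodularity of joint entropy — is precisely the standard argument, and it matches what the paper points to: the paper itself gives no proof, it cites \cite{BV-entropy}*{Theorem 10}, which in turn follows Madiman's treatment. The subtlety you flag at the end is real, and it is exactly where the work lies: the hypotheses grant absolute continuity and finite differential entropy only for $Y$, $X+Y$, $Y+Z$ and $X+Y+Z$, not for $X$, $Z$, or any joint law, so every intermediate quantity in your chain ($H(X)$, $H(Z)$, $H(U,W)$, $H(V,W)$, $H(U,V,W)$) may fail to be a real number and the ``cancellations'' are potentially of the form $\infty-\infty$.

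However, your proposed repair via conditioning on $Y$ does not actually close this gap. Given $Y=y$ one has $U=X+y$, so $H(U\mid Y)=H(X)$ — precisely one of the quantities you were trying to avoid. The conditioning that works is on $X$ (or symmetrically on $Z$). Concretely, $X\to X+Y\to X+Y+Z$ is a Markov chain, since $Z$ is independent of $(X,X+Y)$, so the data-processing inequality gives
\[
I(X;X+Y+Z)\le I(X;X+Y),
\]
an inequality between mutual informations, which are always well-defined as nonnegative (possibly infinite) quantities. Now $H(X+Y\mid X)=H(Y)$ and $H(X+Y+Z\mid X)=H(Y+Z)$, because conditionally on $X=x$ these are translates of $Y$ and of $Y+Z$; both are finite by hypothesis. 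Hence
\[
I(X;X+Y)=H(X+Y)-H(Y),\qquad I(X;X+Y+Z)=H(X+Y+Z)-H(Y+Z)
\]
are honest finite real numbers, and substituting into the data-processing inequality yields \eqref{ruzsaineq} with no appearance of $H(X)$ or $H(Z)$. As for your Gaussian-smoothing alternative: it can be made to work, but ``lower semicontinuity'' is not the right invocation (differential entropy does not behave lower semicontinuously under weak convergence in general, and the convergence $H(W+\e N)\to H(W)$ as $\e\to 0$ requires an argument, e.g.\ via de Bruijn/Fisher information, under the standing finiteness hypotheses). The conditioning-on-$X$ route is cleaner, requires no approximation, and is what the cited proof does.
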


This result goes back in some form at least to a
paper by Kaimanovich and Vershik \cite{kaimanovich-vershik}*{Proposition 1.3}.
The version in that paper assumes that the laws of $X$, $Y$ and $Z$ are identical.
The inequality was rediscovered by Madiman \cite{madiman}*{Theorem I} in the
greater generality stated above.
Then it was recast in the context of entropy analogues of sumset
estimates from additive combinatorics by  Tao \cite{tao}
and Kontoyannis and Madiman \cite{kontoyannis-madiman}.
And indeed Theorem \ref{ruzsa} can be seen as an entropy analogue
of the Pl\"unnecke--Ruzsa inequality in additive combinatorics.
For the proof of this exact formulation see \cite{BV-entropy}*{Theorem 10}.

\subsection{Entropy at a given scale}\label{sc:entropy-at-scale}

We recall the notation
\[
H(X;r)=\int_{0}^1 H(\lfloor X/r+t\rfloor) dt
\]
and
\[
H(X;r_1|r_2)=H(X;r_1)-H(X;r_2).
\]

These quantities originate in the work of Wang \cite{Wan-quantitative}, and they also
play an important role in the paper \cite{LV-entropy-sum-product}, where a
quantitative version of Bourgain's sum-product
theorem is proved.

We continue by recording some useful facts about these notions.
We note that entropy at scales have the following scaling property
\begin{equation}\label{eq:scale-scaling}
H(X;r)=H(sX;sr)
\end{equation}
for any numbers $s,r>0$, which follows immediately from the definition.

If $N$ is an integer then we have the following interpretation, which follows easily from the definition
\begin{equation}\label{equation:interpret2}
H(X;N^{-1}r|r)=\int_0^1 H\big(\lfloor N(r^{-1}X+t)\rfloor\big|\lfloor r^{-1} X+t\rfloor\big)dt.
\end{equation}
Indeed, $\lfloor r^{-1} X+t\rfloor$ is a function of $\lfloor N(r^{-1}X+t)\rfloor$, hence
\[
H\big(\lfloor N(r^{-1}X+t)\rfloor\big|\lfloor r^{-1} X+t\rfloor\big)
=H(\lfloor N(r^{-1}X+t)\rfloor)-H(\lfloor r^{-1} X+t\rfloor).
\]

Combining \eqref{equation:interpret2} and \eqref{support} we see that the entropy between
scales of ratio $N\in \Z$ is at most $\log N$.
It is not difficult to see that this upper bound is sharp
(consider uniform measures on very long intervals), though equality is never attained.

The next lemma gives an alternative definition for entropy at a given scale.
\begin{lem}\label{lm:second-def}
Let $X$ be a bounded random variable in $\R$.
Then
\[
H(X;r)=H(X+I_r)-H(I_r)=H(X+I_r)-\log(r).
\]
where $I_r$ is a uniform random variable in $[0,r]$ independent of $X$.
\end{lem}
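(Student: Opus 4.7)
The plan is to verify both equalities by direct computation. The second equality is immediate: since $I_r$ has density $\tfrac{1}{r}\mathbbm{1}_{[0,r]}$, its differential entropy is $H(I_r)=-\int_0^r \tfrac{1}{r}\log\tfrac{1}{r}\,dy=\log r$. So everything reduces to showing $H(X+I_r) = H(X;r) + \log r$.

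For the main identity, I will first write down an explicit formula for the density of $Y := X + I_r$, and then partition $\R$ into the unit intervals of length $r$ (shifted by a parameter $u$) to extract a discrete entropy on each scale. By independence, the density of $Y$ is
\[
g(y) = \frac{1}{r}\P(X \in [y-r, y]).
\]
Using the change of variable $y = r(m+u)$ with $m \in \Z$ and $u\in[0,1]$, one obtains
\[
H(Y) = -\int_\R g(y)\log g(y)\,dy = \sum_{m\in\Z}\int_0^1 r\,F\!\left(\frac{p_m(u)}{r}\right)du,
\]
where $F(x)=-x\log x$ and $p_m(u) := \P(X\in[r(m+u-1),r(m+u)])$. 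Applying the identity $F(x/r)=\tfrac{1}{r}F(x)+\tfrac{1}{r}x\log r$ yields $r\,F(p_m(u)/r) = F(p_m(u))+p_m(u)\log r$.

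Summing the two pieces separately, the $\log r$ term contributes $\log r \cdot \int_0^1 \sum_m p_m(u)\,du = \log r$, because the intervals $[r(m+u-1),r(m+u)]$ tile $\R$ for each fixed $u$, so the probabilities sum to $1$. After the harmless re-indexing $m'=m-1$, the remaining piece is
\[
\sum_{m'\in\Z}\int_0^1 F\!\left(\P(X\in[r(m'+u),r(m'+1+u)])\right)du,
\]
which matches the definition of $H(X;r)$ after substituting $t=1-u$ (or equivalently, recognizing that $\P(X\in[r(n-t),r(n+1-t)))=\P(\lfloor X/r+t\rfloor=n)$). This gives $H(Y)=H(X;r)+\log r$, as required.

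I do not expect any serious obstacle here — the statement is essentially a bookkeeping identity between a differential entropy of a convolution with the uniform density and an averaged Shannon entropy of the corresponding digital approximation. The only points requiring care are the justification that all the integrals and sums are absolutely convergent (which follows because $X$ is bounded, so $g$ is bounded with compact support and only finitely many $m$ contribute a nonzero $p_m(u)$), and making sure the tiling argument and the re-indexing are applied correctly so that the discrete entropies align with the definition of $H(X;r)$.
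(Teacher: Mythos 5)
Your proof is correct and follows essentially the same route as the paper: compute the density of $X+I_r$ as $\tfrac1r\P(X\in[y-r,y])$, tile $\R$ by length-$r$ intervals indexed by an integer and a shift parameter, and match the resulting integral of $F$ to the definition of $H(X;r)$. The only (cosmetic) difference is that the paper first reduces to $r=1$ via the scaling invariances \eqref{change} and \eqref{eq:scale-scaling}, whereas you carry $r$ through explicitly via the identity $F(x/r)=\tfrac1r F(x)+\tfrac{x}{r}\log r$, which is an equivalent piece of bookkeeping.
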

\begin{proof}
By \eqref{change} and \eqref{eq:scale-scaling}, both sides of the identity
are scaling invariant,
hence it is enough to prove the lemma for $r=1$.
Let $\mu$ be the distribution of $X$.
Then the density of $X+I_1$ is $\mu[x-1,x)$, hence
\begin{align*}
H(X+I_1)=&\int_{-\infty}^{\infty} F(\mu([x,x+1)))dx
=\int_{0}^1\sum_{n\in\Z}F(\mu([n-t,n-t+1)))dt\\
=&\int_{0}^1\sum_{n\in\Z}F(\P(\lfloor X+t\rfloor=n))dt
=\int_0^1 H(\lfloor X+t\rfloor)dt=H(X;1).
\end{align*}
\end{proof}

It follows from the definition that being an average of Shannon entropies
$H(X;r)$, is always non-negative.
Similarly, we see from \eqref{equation:interpret2} that $H(X;r_1|r_2)$ is also non-negative
if $r_2/r_1$ is an integer.
We will see below that this holds also for any $r_2\ge r_1$.

We show that conditional entropy between scales of integral ratio
cannot decrease by taking convolution of measures.
\begin{lem}\label{lm:entropy-conv-nondecrease}
Let $X$ and $Y$ be two bounded independent random variables in $\R$.
Let $r_2>r_1>0$ be two numbers such that $r_2/r_1\in\Z$.
Then
\[
H(X+Y;r_1|r_2)\ge H(X;r_1|r_2).
\]
\end{lem}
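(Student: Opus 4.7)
The plan is to reduce the claim to the submodularity inequality (Proposition \ref{ruzsa}), using Lemma \ref{lm:second-def} to pass from entropy at scales to differential entropies of smoothed convolutions.

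First, I would apply Lemma \ref{lm:second-def} to rewrite
\[
H(X;r_1|r_2)=H(X+I_{r_1})-H(X+I_{r_2})
\]
(the $\log$ terms cancel), and analogously with $X$ replaced by $X+Y$. Here $I_r$ denotes a uniform random variable on $[0,r]$ independent of everything else. Thus the desired inequality $H(X+Y;r_1|r_2)\geq H(X;r_1|r_2)$ is equivalent to
\[
H(X+Y+I_{r_1})+H(X+I_{r_2})\geq H(X+Y+I_{r_2})+H(X+I_{r_1}).
\]

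Second, I would exploit the hypothesis $N:=r_2/r_1\in\Z$ through the elementary distributional identity $I_{r_2}\stackrel{d}{=}I_{r_1}+J$, where $J$ is uniform on the lattice $\{0,r_1,2r_1,\ldots,(N-1)r_1\}$ and independent of $I_{r_1}$. (The point is that a uniform on $[0,r_2]$ can be built by first choosing a subinterval $[kr_1,(k+1)r_1]$ uniformly, then a uniform point inside it.) Using this coupling, and keeping the same $I_{r_1}$ throughout, the inequality we need becomes
\[
H(X+Y+I_{r_1})+H(X+I_{r_1}+J)\geq H(X+Y+I_{r_1}+J)+H(X+I_{r_1}).
\]

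Third, this is exactly the submodularity inequality \eqref{ruzsaineq} of Proposition \ref{ruzsa} applied to the three independent variables $X'=Y$, $Y'=X+I_{r_1}$, $Z'=J$. Independence follows from the joint independence of $X,Y,I_{r_1},J$. The absolute continuity and finite differential entropy of $Y'$, $X'+Y'$, $Y'+Z'$, $X'+Y'+Z'$ follow because $X$ and $Y$ are bounded and at least one continuous uniform summand ($I_{r_1}$) is present in each of these variables; finiteness of differential entropy then comes from the fact that these densities are bounded (convolution with $I_{r_1}$ produces a density bounded by $1/r_1$) and compactly supported.

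I do not expect a serious obstacle: the only nontrivial ingredient is recognizing that the two scales with integer ratio allow the convolutional decomposition $I_{r_2}\stackrel{d}{=}I_{r_1}+J$, which turns the monotonicity statement into a direct application of submodularity. The main thing to be careful about is making sure the same coupling of $I_{r_1}$ is used throughout when rewriting both $H(X+I_{r_2})$ and $H(X+Y+I_{r_2})$, so that Proposition \ref{ruzsa} applies verbatim with $X',Y',Z'$ jointly independent.
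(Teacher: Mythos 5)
Your proposal is correct and is essentially the same argument as the paper's: the paper likewise writes $I_{r_2}=I_{r_1}+Z$ with $Z$ uniform on $\{0,r_1,\ldots,r_2-r_1\}$, rewrites the scale entropies via Lemma~\ref{lm:second-def}, and applies Proposition~\ref{ruzsa} with exactly the same choice of three independent variables. One tiny slip: your displayed identity $H(X;r_1|r_2)=H(X+I_{r_1})-H(X+I_{r_2})$ should read $H(X;r_1|r_2)=H(X+I_{r_1})-H(X+I_{r_2})+\log(r_2/r_1)$ (the $\log$ terms do not cancel within one side), but this constant is the same for $X$ and $X+Y$, so the equivalent inequality you derive is unaffected and the rest of the argument goes through verbatim.
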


\begin{proof}
Write $I_{r_2}=I_{r_1}+Z$, where
$I_{r_i}$ are uniform random variables on $[0,r_i]$ for $i=1,2$  and $Z$
is uniformly distributed on the arithmetic progression $\{0,r_1,2r_1,\ldots,r_2-r_1\}$ and is independent
of $I_{r_1}$.
Now using submodularity (Proposition \ref{ruzsa}), we can write
\begin{align*}
H(X+Y;r_1|r_2)=H(X+Y+I_{r_1})-H(X+Y+I_{r_1}+Z)+\log(r_2/r_1)\\
\ge H(X+I_{r_1})-H(X+I_{r_1}+Z)+\log(r_2/r_1)
=H(X;r_1|r_2).
\end{align*}
\end{proof}

It is reasonable to expect that perturbation on a small scale does not affect
entropy at a much larger scale.
A particular instance of this is proved in the next lemma.

\begin{lem}\label{lm:perturb}
Let $r_1,r_2$ be two positive real numbers such that $2r_1\le r_2$.
Let $X$ and $Y$ be two random
variables such that $X\le Y\le X+r_1$ almost surely.
Then
\[
|H(X;r_2)-H(Y;r_2)|\le 2 \frac{r_1}{r_2}\log(r_2/r_1).
\]
\end{lem}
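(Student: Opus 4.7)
The plan is to reduce the claim to controlling the binary entropy of the scale-$r_2$ discretization disagreement. Write $\epsilon=r_1/r_2\in(0,1/2]$, and for each $t\in[0,1]$ set $A_t=\lfloor X/r_2+t\rfloor$ and $B_t=\lfloor Y/r_2+t\rfloor$. Since $0\le Y-X\le r_1\le r_2/2$ almost surely, the integer-valued difference $D_t:=B_t-A_t$ takes only the values $0$ and $1$. Applying the general inequality \eqref{eq:entropy-difference} to the discrete random variables $A_t$ and $B_t$ for each $t$ yields $|H(A_t)-H(B_t)|\le H(D_t)$. Integrating and using the definition of $H(\cdot;r_2)$ gives
\[
|H(X;r_2)-H(Y;r_2)|\le\int_0^1 H(D_t)\,dt.
\]

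The next step is to estimate $p_t:=\P(D_t=1)$. Conditionally on $(X,Y)$, the fractional part of $X/r_2+t$ is uniform on $[0,1)$, so
\[
\P(D_t=1\mid X,Y)=\frac{Y-X}{r_2}\le\frac{r_1}{r_2}=\epsilon,
\]
and hence $\bar p:=\int_0^1 p_t\,dt=\E[(Y-X)/r_2]\le\epsilon\le1/2$. Writing $H(D_t)=h(p_t)$ for the binary entropy function $h$, which is concave and increasing on $[0,1/2]$, Jensen's inequality gives
\[
\int_0^1 h(p_t)\,dt\le h(\bar p)\le h(\epsilon).
\]

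It remains to observe that $h(\epsilon)\le 2\epsilon\log(1/\epsilon)$ for all $\epsilon\in(0,1/2]$. Setting $g(\epsilon):=h(\epsilon)-2\epsilon\log(1/\epsilon)=\epsilon\log\epsilon-(1-\epsilon)\log(1-\epsilon)$, one computes $g'(\epsilon)=\log[\epsilon(1-\epsilon)]+2/\ln 2$, which is negative on $(0,1/2)$ only when $\epsilon(1-\epsilon)<2^{-2/\ln 2}$; more simply, $\log[\epsilon(1-\epsilon)]\le-2$ on that interval, so $g'\ge 0$ fails the wrong way—redoing it: for $\epsilon\in(0,1/2]$, $\log[\epsilon(1-\epsilon)]\le\log(1/4)=-2$, so $g'(\epsilon)\le -2+2/\ln 2=2(1-\ln 2)/\ln 2>0$, meaning $g$ is increasing on $(0,1/2]$, and since $g(1/2)=0$, we conclude $g\le 0$ on this range. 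Chaining everything yields the desired bound $2(r_1/r_2)\log(r_2/r_1)$.

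The proof involves no significant obstacle; the only delicate point is ensuring that $D_t\in\{0,1\}$, which is exactly why the hypothesis $2r_1\le r_2$ is required, and the slightly nontrivial elementary verification that $h(\epsilon)\le 2\epsilon\log(1/\epsilon)$ throughout the range $(0,1/2]$, which is sharp at $\epsilon=1/2$.
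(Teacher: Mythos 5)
Your argument follows the paper's proof essentially line for line: reduce via $|H(A)-H(B)|\le H(B-A)$ to the entropy of the $\{0,1\}$-valued difference $D_t$, identify the $t$-averaged probability $\bar p=\E[(Y-X)/r_2]\le r_1/r_2$, apply Jensen's inequality for the concave binary entropy $h$, and finish with $h(\epsilon)\le 2\epsilon\log\epsilon^{-1}$ for $\epsilon\in(0,1/2]$. However, your verification of this last inequality contains a genuine error. From $g'(\epsilon)=\log[\epsilon(1-\epsilon)]+2/\ln 2\le -2+2/\ln 2$, which is a \emph{positive} number, you conclude that $g$ is ``increasing on $(0,1/2]$.'' A positive upper bound on a derivative gives no sign information; in fact $g'(\epsilon)\to-\infty$ as $\epsilon\to 0^+$, so $g$ is not monotone on $(0,1/2]$, and the chain of reasoning you wrote does not establish $g\le 0$. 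The inequality is nonetheless true, and a correct finish one step past where you stopped is to compute the second derivative: $g''(\epsilon)=\frac{1-2\epsilon}{\epsilon(1-\epsilon)\ln 2}>0$ on $(0,1/2)$, so $g$ is convex there, and since $g(0^+)=0$ and $g(1/2)=0$, convexity forces $g\le 0$ throughout $(0,1/2]$. One further small slip: for a fixed $t$ the conditional probability $\P(D_t=1\mid X,Y)$ is a $\{0,1\}$-valued indicator, not $(Y-X)/r_2$; it is the $t$-average $\int_0^1\P(D_t=1\mid X,Y)\,dt$ that equals $(Y-X)/r_2$, but since you integrate in $t$ immediately this does not affect the argument.
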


\begin{proof}
We have
\[
H(X;r_2)=\int_0^1H(\lfloor r_2^{-1}X+t\rfloor ) dt,\quad
H(Y;r_2)=\int_0^1H(\lfloor r_2^{-1}Y+t\rfloor ) dt.
\]
Hence using \eqref{eq:entropy-difference} we can write
\begin{align*}
|H(X;r_2)-H(X;r_2)|
\le&\int_0^1|H(\lfloor r_2^{-1}X+t\rfloor )-H(\lfloor r_2^{-1}Y+t\rfloor )|dt\\
\le&\int_0^1H(\lfloor r_2^{-1}Y+t\rfloor -\lfloor r_2^{-1}X+t\rfloor )dt.
\end{align*}

We note that
$\lfloor r_2^{-1}Y+t\rfloor -\lfloor r_2^{-1}X+t\rfloor$
is equal to $0$ or $1$ almost surely,
since $X\le Y\le X+r_1$ and $r_1\le r_2$.

For all $x,y\in \R$ with $x\le y\le x+r_1$ we have
\[
\int_0^1 \lfloor r_2^{-1}y+t\rfloor -\lfloor r_2^{-1}x+t\rfloor dt\le \frac{r_1}{r_2}.
\]
Thus
\begin{align*}
\int_0^1\P(\lfloor r_2^{-1}Y+t\rfloor -&\lfloor r_2^{-1}X+t\rfloor=1)dt\\
=&\E\Big[\int_0^1\lfloor r_2^{-1}Y+t\rfloor -\lfloor r_2^{-1}X+t\rfloor dt \Big]
\le \frac{r_1}{r_2}.
\end{align*}

For each $0\le t\le 1$ we have
\[
H(\lfloor r_2^{-1}Y+t\rfloor -\lfloor r_2^{-1}X+t\rfloor)
= h(\P(\lfloor r_2^{-1}Y+t\rfloor -\lfloor r_2^{-1}X+t\rfloor=1)),
\]
where $h(x)=-x\log x-(1-x)\log(1-x)$.

By Jensen's inequality we then have
\[
\int_0^1H(\lfloor r_2^{-1}Y+t\rfloor -\lfloor r_2^{-1}X+t\rfloor )dt\le h(r_1/r_2).
\]
This proves the claim, since $h(x)\le- 2x\log x$ for $x\le1/2$.
\end{proof}

In the next lemma we show that $H(X;r)$ is a monotone increasing and Lipschitz
function of $-\log r$; in particular $H(X;r_1|r_2)$ is nonnegative for all $r_1\le r_2$.
The lemma is taken from \cite{LV-entropy-sum-product}, but we include the proof
for the reader's convenience.

\begin{lem}\label{lemma:Lipschitz}
Let $X$ be a bounded random variable in $\R$.
Then for any $r_1\ge r_2>0$ we have
\[
0\le H(X;r_2)-H(X;r_1)\le 2(\log r_1-\log r_2).
\]
\end{lem}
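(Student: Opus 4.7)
The plan is to reduce the claim to a two-sided Lipschitz estimate on the differential entropy $g(r) := H(X+I_r)$ using Lemma~\ref{lm:second-def}. Since $H(X;r) = g(r) - \log r$, we have
\[
H(X;r_2)-H(X;r_1) = [g(r_2)-g(r_1)] + (\log r_1 - \log r_2),
\]
so the two stated inequalities are equivalent to the bound $|g(r_1)-g(r_2)| \leq \log r_1 - \log r_2$; the factor $2$ in the Lipschitz constant exactly accommodates the additive $\log r_1 - \log r_2$ coming from the $-\log r$ normalization.

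First I would treat the case of integer ratio $r_1/r_2 = N \in \Z$. Here the identity \eqref{equation:interpret2} expresses $H(X;r_2\mid r_1)$ as the integral over $t\in[0,1]$ of the conditional Shannon entropies $H(\lfloor N(r_1^{-1}X+t)\rfloor \mid \lfloor r_1^{-1}X+t\rfloor)$. Non-negativity of conditional Shannon entropy gives the lower bound $0$, and since $\lfloor N(r_1^{-1}X+t)\rfloor$ takes at most $N$ values once $\lfloor r_1^{-1}X+t\rfloor$ is fixed, the support bound \eqref{support} gives the upper bound $\log N = \log(r_1/r_2)$. This handles both inequalities simultaneously in the integer case.

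For general $r_1 \geq r_2$, my plan is to exploit a mixture decomposition of $I_{r_1}$. Writing $r_1 = nr_2 + s$ with $n = \lfloor r_1/r_2 \rfloor$ and $s \in [0, r_2)$, the law of $I_{r_1}$ is a convex combination of $n$ translates of $I_{r_2}$ (each with weight $r_2/r_1$) and one translate of $I_s$ (with weight $s/r_1$). Concavity of the differential entropy under mixtures then yields a lower bound on $g(r_1)$ in terms of $g(r_2)$ and $g(s)$, while the mixture subadditivity inequality (as in the computation preceding \eqref{subadd}) yields a matching upper bound, losing only the entropy of the mixing weights. The residual term $g(s)$ with $s<r_2$ can be controlled by introducing a common refining scale and applying the integer-ratio case established above.

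The main obstacle will be managing the residual piece cleanly: unlike the integer case where the mixture is exact, the fractional leftover $I_s$ contributes an extra term that must be bounded in terms of $g(r_2)$ without circular reasoning. The factor of $2$ in the stated Lipschitz bound provides exactly the slack needed to absorb both the mixing entropy and the residual contribution, once the integer-ratio estimate is in hand.
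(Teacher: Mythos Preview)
Your reduction to $|g(r_1)-g(r_2)|\le \log(r_1/r_2)$ with $g(r)=H(X+I_r)$ is correct, and the integer-ratio case via \eqref{equation:interpret2} is fine. The gap is in the passage to arbitrary ratios. First, a ``common refining scale'' $r_0$ with $r_2/r_0,\,s/r_0\in\Z$ exists only when $r_1/r_2$ is rational; even then, the integer case only gives $g(r_0)\le g(s)\le g(r_0)+\log(s/r_0)$ and $g(r_0)\le g(r_2)\le g(r_0)+\log(r_2/r_0)$, so the implied bound on $g(s)-g(r_2)$ depends on $r_0$ and blows up as $r_0\to 0$. Second, your remark about the factor $2$ supplying slack is mistaken: you already consumed it in the reduction, so the target really is $|g(r_1)-g(r_2)|\le\log(r_1/r_2)$ with no room to spare. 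A concrete test case shows the mixture bound is too weak: take $r_1=3$, $r_2=2$, so $n=1$, $s=1$. The mixture upper bound gives
\[
g(3)-g(2)\le \tfrac{1}{3}\bigl(g(1)-g(2)\bigr)+h(2/3)\le h(2/3)\approx 0.918,
\]
whereas you need $g(3)-g(2)\le\log(3/2)\approx 0.585$. So the monotonicity direction $H(X;r_2)\ge H(X;r_1)$ does not follow from your scheme.

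The paper proceeds differently. It rewrites
\[
H(X;r)=\int_\R\int_0^1 -\log\P\bigl(X\in[y+(t-1)r,\,y+tr]\bigr)\,dt\,d\mu(y),
\]
and observes that for fixed $y,t$ the interval $[y+(t-1)r,y+tr]$ has length $r$ and shrinks monotonically as $r$ decreases; hence the integrand, and so $H(X;r)$, is monotone in $-\log r$, giving the lower bound directly for \emph{all} $r_1\ge r_2$. For the upper bound it differentiates this expression (after reducing to absolutely continuous $X$) and shows $\left.\frac{d}{d\rho}H(X;2^{-\rho})\right|_{\rho=0}\le 2$, then rescales. If you want to avoid differentiation, you would still need an argument that works uniformly for ratios near $1$; the mixture decomposition alone does not supply one.
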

\begin{proof}
We observe that the density of $X+rI_1$ is equal to $r^{-1}\P(X\in[x-r,x])$.
Then
\begin{align*}
H(X&;r)=H(X+rI_1)-\log r\\
=&-\int_{\R}r^{-1}\P(X\in[x-r,x])\log(r^{-1}\P(X\in[x-r,x]))dx-\log r\\
=&\int_\R\int_0^1-\log(\P(X\in[y+rt-r,y+rt]))dtd\mu(y),
\end{align*}
where we substituted $x=y+rt$ and used again the fact that $r^{-1}\P(X\in[x-r,x])$
is the density of $X+rI_1$.
We note that the function $-\log(\P(X\in[y+(t-1)r,y+tr]))$ is an increasing function of $-\log r$
for any fixed $y$ and $t\in[0,1]$,
hence the lower bound follows.

For the upper bound, we assume without loss of generality that $X$ is absolutely continuous
(use e.g. Lemma \ref{lm:perturb}) and write $f$ for its density.
In this case
\[
\frac{d}{dr}\P(X\in[y+r(t-1),y+rt])=(1-t)f(y+r(t-1))+tf(y+rt).
\]
Hence
\begin{align*}
\left.\frac{dH(X;r)}{dr}\right|_{r=1}
=&-\int_{\R}\int_{0}^1\frac{(1-t)f(y+(t-1))+tf(y+t)}{(\ln2)\P(X\in[y+(t-1),y+t])}dtf(y)dy\\
\ge&-\int_{\R}\int_{y}^{y+1}\frac{(f(z-1)+f(z))f(y)}{(\ln2)\P(X\in[z-1,z])}dzdy\\
=&-\int_{\R}\int_{z-1}^z\frac{(f(z-1)+f(z))f(y)}{(\ln2)\P(X\in[z-1,z])}dydz\\
=&-2(\ln2)^{-1}.
\end{align*}
To derive the second line, we used the estimates $t\le1$, $1-t\le1$
and the substitution $z=y+t$.
{}From this we conclude
\[
\left.\frac{dH(X;2^{-\rho})}{d\rho}\right|_{\rho=0}\le 2.
\]
Dilating $X$, we derive the same inequality for all $\rho$.
This implies the upper bound in the lemma.
\end{proof}

We have seen above that convolution can only increase entropy between scales of integral
ratio.
Unfortunately, this does not hold for general scales, but it does hold with small error,
provided the ratio of the scales is large.
This is the content of the next lemma.
Recall that we write $H(\mu)=H(X)$, when the measure $\mu$ is the law of the random variable $X$.

\begin{lem}\label{lm:fractional-scale}
Let $\mu$ and $\nu$ be two compactly supported probability measures on $\R$
and be $0<r_2<r_1$ numbers.
Then
\[
H(\mu*\nu;r_2|r_1)\ge H(\mu;r_2|r_1)-\frac{2}{(\ln 2)(r_1/r_2-1)}.
\]
\end{lem}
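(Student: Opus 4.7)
The plan is to reduce to the integer-ratio case that we have already proved in Lemma \ref{lm:entropy-conv-nondecrease}, and then absorb the mismatch between $r_1$ and the nearest integer multiple of $r_2$ using the Lipschitz-type control of Lemma \ref{lemma:Lipschitz}.

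Concretely, I would set $N=\lfloor r_1/r_2\rfloor$, so $N\in\Z_{\ge 1}$ and $Nr_2\le r_1<(N+1)r_2$. Lemma \ref{lm:entropy-conv-nondecrease} (applicable because $Nr_2/r_2=N$ is an integer) gives
\[
H(\mu*\nu;r_2|Nr_2)\ge H(\mu;r_2|Nr_2).
\]
To pass from the scale $Nr_2$ back to $r_1$, I would rewrite
\[
H(\mu*\nu;r_2|r_1)-H(\mu*\nu;r_2|Nr_2)=H(\mu*\nu;Nr_2)-H(\mu*\nu;r_1)\ge 0
\]
by the monotonicity part of Lemma \ref{lemma:Lipschitz} (since $Nr_2\le r_1$). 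Thus $H(\mu*\nu;r_2|r_1)\ge H(\mu;r_2|Nr_2)$. On the $\mu$ side, the Lipschitz upper bound of the same lemma yields
\[
H(\mu;r_2|Nr_2)-H(\mu;r_2|r_1)=H(\mu;r_1)-H(\mu;Nr_2)\ge -2\log\bigl(r_1/(Nr_2)\bigr).
\]
Combining the two inequalities gives
\[
H(\mu*\nu;r_2|r_1)\ge H(\mu;r_2|r_1)-2\log\bigl(r_1/(Nr_2)\bigr).
\]

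The only remaining task is to bound the loss $2\log(r_1/(Nr_2))$ by the quantity in the statement. Using $r_1<(N+1)r_2$ I would estimate $r_1/(Nr_2)<1+1/N$, and then the bound $N>r_1/r_2-1$ gives $r_1/(Nr_2)-1<1/(r_1/r_2-1)$. Finally $\log(1+x)\le x/\ln 2$ for $x\ge 0$ converts this into $2\log(r_1/(Nr_2))\le 2/\bigl((\ln 2)(r_1/r_2-1)\bigr)$, which is exactly the claimed error term.

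I do not expect any real obstacle here; the only subtle point is making sure the direction of the monotonicity in Lemma \ref{lemma:Lipschitz} is used correctly on each side (one wants the cheap, sign-free direction for $\mu*\nu$ and the quantitative direction for $\mu$), and that the case $N=1$, i.e. $r_1/r_2\in[1,2)$, is handled by the same estimate — which it is, since in that range the stated bound $2/((\ln 2)(r_1/r_2-1))$ is large enough to dominate.
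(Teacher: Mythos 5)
Your argument is correct and is essentially identical to the paper's own proof: both set $N=\lfloor r_1/r_2\rfloor$, invoke Lemma~\ref{lm:entropy-conv-nondecrease} for the integer-ratio step, use the two directions of Lemma~\ref{lemma:Lipschitz} to handle the gap between $Nr_2$ and $r_1$ on the two measures, and close with $\log(1+x)\le x/\ln 2$. There is no meaningful difference in approach.
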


\begin{proof}
Write $N=\lfloor r_1/r_2\rfloor$.
Then 
\[
H(\mu*\nu;r_2|Nr_2)\ge H(\mu;r_2|N r_2)
\]
by our previous discussion.

By Lemma \ref{lemma:Lipschitz}, we have
\begin{align*}
H(\mu*\nu;Nr_2|r_1)\ge& 0,\\
H(\mu;Nr_2|r_1)\le& 2\log \frac{r_1}{Nr_2}.
\end{align*}

Combining our estimates, we find
\[
H(\mu*\nu;r_2|r_1)\ge H(\mu;r_2|r_1)-2\log \frac{r_1}{Nr_2}.
\]
We note $Nr_2\le r_1< (N+1)r_2$, hence
\[
 1\le \frac{r_1}{Nr_2}<1+\frac{1}{N}<1+\frac{1}{r_1/r_2-1}.
\]
Now the claim follows from $\log (1+x)\le(\ln 2)^{-1} x$.
\end{proof}

\subsection{Entropy of non-probability measures} \label{sc:non-prob}

It is convenient to use the notation $H(\mu)$, $H(\mu;r)$ and $H(\mu;r_1|r_2)$
for Shannon and differential entropies and for entropies at a scale
also for positive measures $\mu$ that have total mass different from $1$.
Let $\mu$ be such a measure and write $\|\mu\|$ for its total mass.
In this paper, we use the conventions
\begin{equation}\label{eq:non-probability}
H(\mu)=p H(p^{-1}\mu),\quad 
H(\mu;r)=p H(p^{-1}\mu;r),
\end{equation}
where $p=\|\mu\|$.

With this convention, entropy has the following superadditive property.
Let $\mu,\ldots,\mu_n$ and $\mu$ be positive measures of the same type and $a_1,\ldots, a_n$
positive real numbers such that
$\mu=a_1\mu_1+\ldots+a_n\mu_d$.
Then
\begin{equation}\label{eq:superadditive}
H(\mu)\ge a_1 H(\mu_1)+\ldots+a_n H(\mu_d)
\end{equation}
holds for both Shannon and differential entropies.
If all the measures are probabilities, then this is an immediate consequence
of Jensen's inequality applied to the concave function $F(x)=-x\log x$.
The general case follows from this special case and the convention \eqref{eq:non-probability}.

Entropies at scales are also superadditive, since they are defined as
averages of Shannon entropies.
Moreover, this property also holds for conditional entropies between scales
of integral ratio.

\begin{lem}\label{lm:conditional-super-additive}
Let $\mu_1,\ldots,\mu_k$ be non-negative compactly supported measures on $\R$,
$r>0$ and $N\in\Z_{>0}$.
Then
\[
H(\mu_1+\ldots+\mu_k;N^{-1}r|r)\ge H(\mu_1;N^{-1}r|r)+\ldots + H(\mu_k;N^{-1}r|r).
\]
\end{lem}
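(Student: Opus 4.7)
The plan is to reduce the statement to a Shannon-entropy inequality for mixtures of probability measures and then apply the standard fact that conditioning reduces entropy. The superadditivity \eqref{eq:superadditive} already handles the individual $H(\cdot;r)$ and $H(\cdot;N^{-1}r)$ terms, but not their difference, so we need a different route for conditional entropy.

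First, I would normalize. Set $a_i=\|\mu_i\|$, $a=a_1+\ldots+a_k$, $\nu_i=\mu_i/a_i$, and $\nu=(1/a)\sum a_i\nu_i$. Unfolding the convention \eqref{eq:non-probability} and the definition $H(\mu;r_1|r_2)=H(\mu;r_1)-H(\mu;r_2)$, the claim reduces to the concavity statement
\[
H(\nu;N^{-1}r|r)\ge\sum_{i=1}^k\frac{a_i}{a}H(\nu_i;N^{-1}r|r)
\]
for the probability measures $\nu,\nu_1,\ldots,\nu_k$.

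Next, I would invoke the formula \eqref{equation:interpret2}, which expresses $H(\nu;N^{-1}r|r)$ as an integral over $t\in[0,1]$ of the Shannon conditional entropy $H(\lfloor N(r^{-1}X+t)\rfloor\,|\,\lfloor r^{-1}X+t\rfloor)$, where $X$ has law $\nu$. Since $\nu$ is a mixture, I can realize $X$ as $X_I$, where $I$ is an independent selector with $\P(I=i)=a_i/a$ and, conditionally on $I=i$, $X$ is drawn from $\nu_i$. Writing $Y=\lfloor N(r^{-1}X_I+t)\rfloor$ and $Z=\lfloor r^{-1}X_I+t\rfloor$, the standard inequality "conditioning reduces entropy" (used in the paper already, e.g. before \eqref{eq:entropy-difference}) gives $H(Y|Z)\ge H(Y|Z,I)$, and the right-hand side equals $\sum(a_i/a)H(Y_i|Z_i)$, where $Y_i,Z_i$ are the analogous random variables built from an independent draw $X_i\sim\nu_i$.

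Finally, I would integrate this pointwise inequality over $t\in[0,1]$, recognize each resulting integral as $H(\nu_i;N^{-1}r|r)$ via \eqref{equation:interpret2} again, and unwind the normalization convention \eqref{eq:non-probability} to recover the stated inequality for the non-probability measures $\mu_i$. There is no real obstacle here; the only thing to be careful about is the bookkeeping between $H(\mu_i;\cdot)$ and $a_iH(\nu_i;\cdot)$, which is a straightforward application of \eqref{eq:non-probability}.
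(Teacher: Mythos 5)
Your proof is correct and is essentially the same as the paper's: both start from the formula \eqref{equation:interpret2} expressing $H(\cdot;N^{-1}r|r)$ as an integral over $t$ of a Shannon conditional entropy, and both reduce the claim to concavity of Shannon entropy. The only difference is packaging: the paper decomposes the conditional entropy as $\sum_a H(\rho_{t,a})$ in terms of restriction measures and then invokes the superadditivity inequality \eqref{eq:superadditive} for (unnormalized) Shannon entropy, whereas you normalize first, realize the convex combination via an independent selector $I$, and cite ``conditioning reduces entropy'' in the form $H(Y\mid Z)\ge H(Y\mid Z,I)$ before unwinding \eqref{eq:non-probability}.
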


\begin{proof}
By \eqref{eq:scale-scaling},
we may assume without loss of generality that $r=N$.

For a random variable $X$ with law $\mu$, we have the formula
\eqref{equation:interpret2}:
\[
H(\mu;1|N)=H(X;1|N)=\int_0^1 H\big(\lfloor X+Nt\rfloor\big|\lfloor N^{-1} X+t\rfloor\big)dt.
\]

For each $t\in[0,1]$ and $a\in\Z$, we define the non-negative measure $\rho_{t,a}$ on $[0,N-1]\cap\Z$
by
\[
\rho_{t,a}(j)=\P(\lfloor X+Nt\rfloor=aN+j)=\mu([aN-tN+j,aN-tN+j+1)).
\]

Using these measures, the definition of conditional entropy reads
\[
H\big(\lfloor X+Nt\rfloor\big|\lfloor N^{-1} X+t\rfloor\big)
=\sum_{a\in\Z} \|\rho_{t,a}\| H(\|\rho_{t,a}\|^{-1}\rho_{t,a})
=\sum_{a\in\Z} H(\rho_{t,a}).
\]

We plug this in our first formula and obtain:
\[
H(X;1|N)=\int_0^1 \sum_{a\in\Z} H(\rho_{t,a}) dt.
\]

Therefore, we can express conditional entropy between scales of integral ratio
as an integral of a sum of Shannon entropies.
Hence the lemma reduces to superadditivity of Shannon entropies. 
\end{proof}

\subsection{\texorpdfstring{Measures supported on $\Z$}{Measures supported on Z}}

We consider measures supported on $\Z$ in this section, and develop some formulae
for their entropies.
Let $X$ be an integer valued random variable, and let $M\in\Z_{>1}$.

Using the formula \eqref{equation:interpret2}, we can write
\[
H(X;1|M)=\int_0^1 H\big(\lfloor X+Mt\rfloor\big|\lfloor M^{-1}X+t\rfloor\big)dt.
\]
We observe that the integrand is constant on the interval $[aM^{-1},(a+1)M^{-1})$
and is equal to $H\big( X+a\big|\lfloor M^{-1}(X+a)\rfloor\big)$, for each $a=0,1,\ldots, M-1$.
Hence we can write
\begin{equation}\label{eq:ent-on-Z1}
H(X;1|M)=\frac{1}{M}\sum_{a=0}^{M-1}H\big(X+a\big|\lfloor M^{-1}(X+a)\rfloor\big).
\end{equation}

For each $a\in\Z$, write $\rho_a$ for the restriction of the law of $X$ to the
interval $[a,a+M-1]$ without normalization, that is
$\rho_a(n)=\P(X=n)$ for $n\in \{a,\ldots,a+M-1\}$ and $\rho_a(n)=0$ otherwise.
We note that
\begin{align*}
H\big(X+a&\big|\lfloor M^{-1}(X+a)\rfloor\big)\\
=&\sum_{b\in\Z}\P(\lfloor M^{-1}(X+a)\rfloor=b)H\big(X+a\big|\lfloor M^{-1}(X+a)\rfloor=b\big)\\
=&\sum_{b\in\Z}\|\rho_{Mb-a}\|\cdot H(\|\rho_{Mb-a}\|^{-1}\cdot\rho_{Mb-a})\\
=&\sum_{b\in\Z}H(\rho_{Mb-a})
\end{align*}
using the convention for entropies of non-probability measures we made in the previous section.
We combine this with \eqref{eq:ent-on-Z1} and get
\begin{equation}\label{eq:ent-on-Z2}
H(X;1|M)=\frac{1}{M}\sum_{a\in\Z}H(\rho_a).
\end{equation}

\section{Entropy of convolutions in the high entropy regime}
\label{sc:proof-high-ent-conv}

This section is devoted to the proof of Theorem  \ref{th:high-entropy-convolution},
which we restate.
\begin{thm*}
There is an absolute constant $C>0$ such that the following holds.
Let $\mu,\wt\mu$ be two compactly supported probability measures on $\R$ and let $0<\a<1/2$ and $r>0$ be real numbers.
Suppose that
\[
H(\mu;s|2s)\ge 1-\a\quad \text{and}\quad H(\wt\mu;s|2s)\ge 1-\a
\]
for all $s$ with $|\log r -\log s|<3\log \a^{-1}$.

Then
\[
H(\mu*\wt\mu;r|2r)\ge 1- C(\log \a^{-1})^3 \a^2.
\]
\end{thm*}

We begin with a discussion motivating the argument.
As we already noted, the entropy of a measure between the scales $1$ and $M$
for some $M\in\Z_{>0}$, is always bounded above by $\log M$.
Motivated by this, we refer to the quantity
$\log M-H(\mu;1|M)$ as the missing entropy between these scales.
If $a$, $b$ are positive integers such that $a|b$ and $b|M$, then
\[
\log(b/a)- H(\mu;a|b)\le \log M - H(\mu;1|M),
\]
that is, the missing entropy between the scales
$a$ and $b$ is at most as much as between the scales $1$ and $M$.
This follows easily from $H(\mu;1|a)\le \log a$ and $H(\mu;b|M)\le \log (M/b)$.

In this language, Theorem  \ref{th:high-entropy-convolution} can be stated informally as follows.
If we take two measures whose missing entropies are small, then the missing entropy of their
convolution may be only a little larger than the product of the missing entropies of the
factors.
The intuition behind this result is that we can decompose the probability measures $\mu$
and $\wt\mu$ as a combination of the uniform distribution (on an interval, say)
plus an error term controlled by
the missing entropy in a suitable quantitative sense.
Since the convolution of a uniform measure with any measure is (close to) uniform,
the only term contributing to the missing entropy of the convolution is the convolution
of the error terms.
We will control the ``size" of this term in a suitable sense by the product of the ``sizes''
of the error terms.

We will reduce the theorem to a problem about measures supported on the set $[1,N]\cap\Z$,
where $N$ is an integer comparable to a suitable negative power of $\a$.
We will do this in two steps beginning with the following result about measures supported
on $\Z$.

\begin{prp}\label{pr:high-ent-Z}
There is an absolute constant $C>0$ such that the following holds.
Let $\nu$ and $\wt\nu$ be two probability measures on $\Z$.
Let $N$, $M$ be two positive integers such that $2|N$ and $M|N$.
Then
\begin{align*}
\log M-H(\nu*&\wt\nu;1|M)\\
\le & C\log M(\log N-H(\nu;1|N))(\log N-H(\wt\nu;1|N))\quad\footnotemark\\
&+C\frac{M\log M}{N}.\quad\footnotemark
\end{align*}
\addtocounter{footnote}{-1}
\footnotetext{$C_6=6\cdot 10^4$.\label{page:C6}}
\stepcounter{footnote}
\footnotetext{$C_7=4000$.\label{page:C7}}
\end{prp}

To see how  Theorem \ref{th:high-entropy-convolution} can be reduced to this, we assume,
as we may, that $r$ is an integer comparable to $\a^{-C}$.
We will show that $H(\mu*\wt\mu;r|2r)$ is not sensitive to perturbations on scale $1$,
so we can replace $\mu$ and $\wt\mu$ by measures $\nu$ and $\wt\nu$ supported on $\Z$.
We will then apply Proposition \ref{pr:high-ent-Z} for these perturbed measures with $M=2r$ and $N=M^2$.
We will conclude by observing that the missing entropy between the scales $M/2$ and $M$ can be bounded above
by the missing entropy between the scales $1$ and $M$.
The details of this will be given in Section \ref{sc:proof-high-ent-th}.

Proposition \ref{pr:high-ent-Z} will be proved in Section \ref{sc:proof-high-ent-Z}
by decomposing the measures $\nu$ and $\wt\nu$ as convex combinations of measures
supported on intervals of length $N$ and using the following result.

\begin{prp}\label{pr:restricted-conv}
There is an absolute constant $C>0$ such that the following holds.
Let $N$ be a positive integer and let $\mu$ and $\wt\mu$ be two probability measures
concentrated on $[1,N]\cap\Z$.
Suppose that $2|N$ and let $M|N$.
Write $\s=(\mu*\wt\mu)|_{[N/2+1,3N/2]}$.
Then
\begin{align*}
 \|\s\|_1\cdot\log M-H(\s;1|M)
\le& C\log M (\log N -H(\mu))(\log N-H(\wt\mu))\quad\footnotemark\\
&+C\frac{M\log M}{N}.\quad\footnotemark
\end{align*}
\addtocounter{footnote}{-1}
\footnotetext{$C_4=4\cdot 10^4$.\label{page:C4}}
\stepcounter{footnote}
\footnotetext{$C_5=3000$.\label{page:C5}}
\end{prp}

The key observation behind the proof of Proposition \ref{pr:restricted-conv}
is that we can decompose $\mu$ and $\wt\mu$ as sums of pairs of functions,
such that one in the pair has controlled $L^2$ distance from the uniform distribution
and the other one has controlled $L^1$ norm in terms of the missing entropy.
Then we can write $\s$ as the combination of $4$ functions, each of which
will be estimated using different methods.

The reason for restricting the convolution to the interval $[N/2+1,3N/2]$ is technical.
In a certain stage, we will show that one of the terms
contributing to the convolution does not vary too much on intervals of appropriate length.
To convert this to a bound on entropy, we need to know that the function is
not too small, which is achieved by cutting off the ends of the support.

We will explain the above mentioned decomposition in Section \ref{sc:high-decompose}.
We estimate the convolution of two functions controlled in $L^2$ in Section \ref{sc:L2}.
We estimate the convolution of two functions, one of which is controlled in $L^2$ and one of which
is controlled in $L^1$ in Section \ref{sc:L2L1}.
Then we combine these estimates in Section \ref{sc:restricted-conv} to obtain the proof
of Proposition \ref{pr:restricted-conv}.

\begin{nota}
In this section, we write $\chi_N$ for the normalized counting measure on $[1,N]\cap\Z$,
i.e. for the function $\chi_N:\Z\to \R$ given by
\[
\chi_N(x)=
\begin{cases}
\frac{1}{N}& x\in [1,N]\\
0&\text{otherwise}.
\end{cases}
\]

Most functions and measures in this section are defined on $\Z$, and their $L^p$-norms
are defined with respect to the counting measure.
It will be convenient for us to identify measures with their densities.
\end{nota}

\subsection{}
\label{sc:high-decompose}
The purpose of this section is the following decomposition of measures
of high entropy.

\begin{lem}\label{lm:decomp}
Let $N$ be a positive integer and let $\mu$ be a probability measure concentrated on $[1,N]\cap\Z$.
Then there are two non-negative functions $f,g:\Z\to \R$ such that
$\mu=f+g$ and the following estimates hold:
\begin{align*}
\|f-\chiN\|_2^2&\le2\frac{\log N-H(\mu)}{N}, \qquad \|f\|_\infty\le\frac{2}{N},\qquad \|f\|_1\le 1,\\
\|g\|_1&\le2 (\log N-H(\mu)).
\end{align*}
\end{lem}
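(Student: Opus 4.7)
The natural candidate is the truncation $f(n) = \min(\mu(\{n\}),\,2/N)\,\one_{[1,N]}(n)$ with $g = \mu - f$, which makes three of the four required estimates immediate: $f, g \geq 0$, $\mu = f + g$, $\|f\|_\infty \leq 2/N$, and $\|f\|_1 \leq \|\mu\|_1 = 1$. The real content of the lemma is therefore the two quantitative bounds linking the tail mass $\|g\|_1$ and the $L^2$ discrepancy $\|f - \chiN\|_2^2$ to the missing entropy $D := \log N - H(\mu)$.

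Writing $p_n = \mu(\{n\})$, I would rewrite $D$ as a KL-type quantity against the uniform measure, $D = \sum_{n=1}^{N} p_n \log(Np_n)$, and then study $\phi(x) = x \log x$ by Taylor expansion around $x = 1$. From $\phi(1) = 0$, $\phi'(1) = 1/\ln 2$, and $\phi''(\xi) = 1/(\xi \ln 2)$ one obtains $\phi(Np_n) \geq (Np_n - 1)/\ln 2 + (Np_n - 1)^2/(4 \ln 2)$ whenever $Np_n \leq 2$, and by instead using the tangent line at $x = 2$ (combined with $\phi(2) = 2$) one gets $\phi(Np_n) \geq (Np_n - 1)/\ln 2 + (Np_n - 2) + (2 - 1/\ln 2)$ whenever $Np_n > 2$. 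Summing these inequalities over $n \in [1,N]$, the linear tangent contributions cancel because $\sum_{n=1}^N (Np_n - 1) = 0$, leaving the single master inequality
\[
ND \;\geq\; \frac{1}{4\ln 2}\sum_{n:\,Np_n \leq 2}(Np_n - 1)^2 \;+\; \sum_{n:\,Np_n > 2}(Np_n - 2) \;+\; \Bigl(2 - \tfrac{1}{\ln 2}\Bigr)\bigl|\{n : Np_n > 2\}\bigr|.
\]

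All three terms on the right are nonnegative, and from each I would read off one of the desired estimates. The middle term gives $\|g\|_1 = N^{-1}\sum_{n:\,Np_n > 2}(Np_n - 2) \leq D$ directly. For the $L^2$ bound, I would observe that $\|f - \chiN\|_2^2 = N^{-2}\bigl(\sum_{Np_n \leq 2}(Np_n - 1)^2 + |\{n: Np_n > 2\}|\bigr)$ and then bound the two pieces by the first and third terms of the master inequality, yielding $\|f - \chiN\|_2^2 \leq C\,D/N$ for an absolute constant $C$.

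The main obstacle is purely bookkeeping of constants: the factors $\phi''(\xi) \geq 1/(2\ln 2)$ on $\xi \in [0,2]$ and $2 - 1/\ln 2 \approx 0.56$ do not by themselves produce the clean constants "$2$" stated in the lemma, so a mild refinement of the truncation threshold or the acceptance of a slightly larger absolute constant is needed (the latter being harmless for the application in Proposition~\ref{pr:restricted-conv}, where the constants $C_4, C_5$ are already huge). The only conceptually subtle point is that a naive Pinsker-type argument would give only $\|g\|_1 \lesssim \sqrt{D}$, which is too weak; the truncation precisely at scale $2/N$ is what converts the missing entropy into a bound \emph{linear} in $D$ on the tail, exploiting that $x \log x \geq x$ for $x \geq 2$.
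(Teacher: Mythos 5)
Your proof takes essentially the same approach as the paper: truncate $\mu$ at level $2/N$ and convert the missing entropy $D = \log N - H(\mu)$ into both an $L^2$ bound on the good set and an $L^1$ bound on the bad set via pointwise lower bounds for $\phi(x) = x\log x$, with the linear tangent terms cancelling since $\sum(Np_n - 1) = 0$. Your argument is sound, you correctly observe that a larger absolute constant is harmless downstream, and your bound $\|g\|_1 \le D$ is in fact sharper than what the lemma states. Two small choices account for why the paper lands exactly on the constant $2$ while you land around $4.6$. First, on the bad set $\{n : \mu(n) > 2/N\}$ the paper sets $f(n) = \chiN(n) = 1/N$ rather than $2/N$; with that choice $\|f - \chiN\|_2^2$ receives \emph{no} contribution from the bad set, eliminating the $N^{-2}\bigl|\{n : Np_n > 2\}\bigr|$ term you have to absorb from the third piece of your master inequality (and the accompanying $\|g\|_1 = \sum_{\mathrm{bad}}(p_n - 1/N)$ is still controlled by the linear tangent at $x=2$). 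Second, rather than the worst-case Taylor bound with $\phi''_{\min} = 1/(2\ln 2)$ on $[0,2]$ (coefficient $1/(4\ln 2) \approx 0.36$), the paper uses the tailored quadratic $(2 - 1/\ln 2)(x-1)^2 + (x-1)/\ln 2$, which matches $\phi$ in value and derivative at $x=1$ and in value at $x=2$ and is a genuine lower bound on all of $[0,2]$; its leading coefficient $2 - 1/\ln 2 \approx 0.56$ exceeds $1/2$, and combined with the first change this gives the stated factor $2$ directly. Both fixes are needed: either alone still leaves a constant above $2$.
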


\begin{proof}
Set
\begin{align*}
f(n)&:=
\begin{cases}
\mu(n)&\text{if $\mu(n)\le2/N$},\\
\chiN(n)&\text{otherwise},
\end{cases}\\
g(n)&:=\mu(n)-f(n).
\end{align*}

We note the inequalities
\[
x\log x\ge
\begin{cases}
(1/\ln2)(x-1)+(2-1/\ln2)(x-1)^2&\text{if $0\le x\le 2$}\\
2x-2&\text{if $x\ge2$}.
\end{cases}
\]
We substitute $x=Ny$.
If $y\le 2/N$ we obtain
\[
y\log(Ny)\ge\frac{1}{\ln2}\Big(y-\frac{1}{N}\Big)
+\Big(2-\frac{1}{\ln2}\Big)N\Big(y-\frac{1}{N}\Big)^2,
\]
and
\begin{equation}\label{eq:lower1}
\frac{\log N}{N}+y\log y\ge
\Big(\frac{1}{\ln2}-\log N\Big)\Big(y-\frac{1}{N}\Big)
+\Big(2-\frac{1}{\ln2}\Big)N\Big(y-\frac{1}{N}\Big)^2.
\end{equation}
If $y>2/N$ we obtain
\[
y\log(Ny)\ge 2\Big(y-\frac{1}{N}\Big)
\]
and
\begin{equation}\label{eq:lower2}
\frac{\log N}{N}+y\log y\ge
\Big(\frac{1}{\ln2}-\log N\Big)\Big(y-\frac{1}{N}\Big)
+\Big(2-\frac{1}{\ln2}\Big)\Big(y-\frac{1}{N}\Big).
\end{equation}

Using  \eqref{eq:lower1} if $\mu(n)\le2/N$ and \eqref{eq:lower2} if $\mu(n)>2/N$, we can write
\begin{align}
\log N-H(\mu)&=\sum_{n=1}^{N}\Big(\frac{\log N}{N}+\mu(n)\log\mu(n)\Big)\nonumber\\
&\ge\Big(\frac{1}{\ln 2}-\log N\Big)\sum_{n=1}^{N}\Big(\mu(n)-\frac{1}{N}\Big)\label{equation:linterm}\\
&\quad+\sum_{\mu(n)\le2/N}\Big(2-\frac{1}{\ln 2}\Big)N\Big(\mu(n)-\frac{1}{N}\Big)^2\nonumber\\
&\quad+\sum_{\mu(n)>2/N}\Big(2-\frac{1}{\ln 2}\Big)\Big(\mu(n)-\frac{1}{N}\Big).\nonumber
\end{align}
Since $\sum\mu(n)=1$, the term \eqref{equation:linterm} vanishes.
Using the definitions of $f$ and $g$ we can write
\[
\log N-H(\mu)\ge\Big(2-\frac{1}{\ln 2}\Big)N\|f-\chi_N\|_2^2+\Big(2-\frac{1}{\ln 2}\Big)\|g\|_1.
\]
The claim now follows by noting that $2-1/\ln 2\ge1/2$.
\end{proof}

\subsection{}
\label{sc:L2}

The purpose of this section is to give a lower bound for the entropy of the convolution of
two functions whose $L^2$ distance from $\chi_N$ is small.

\begin{lem}\label{lm:L2-conv}
Let $N$ be a positive integer and
let $f,\wt f:\Z\to \R_{\ge0}$ be two functions concentrated on $[1,N]$
such that $\|f\|_\infty,\|\wt f\|_\infty \le 2/N$ and $\|f\|_1,\|\wt f\|_1\le 1$.
Suppose $2|N$ and let $M|N$.
Suppose further
\[
\|f-\chi_N\|_2^2\le \frac{1}{100N},\qquad\|\wt f-\chi_N\|_2^2\le \frac{1}{100N}.
\]
Write $\rho=(f*\wt f)|_{[N/2+1,3N/2]}$.

Then
\[
 \|\rho\|_1\log M - H(\rho;1|M)\le C\Big(N^2 \|f-\chiN\|_2^2\cdot \|\wt f-\chiN\|_2^2
+\frac{M\log M}{N}\Big).\quad\footnote{$C_1=1000$.\label{page:C1}}
\]
\end{lem}

First we give a lower bound for the entropy of a function whose $L^2$ distance from $\chi_M$ is small.
This is a partial converse to Lemma \ref{lm:decomp}.

\begin{lem}\label{lm:L2}
Let $M$ be an integer and let $\mu$ be a probability measure concentrated on $[1,M]\cap \Z$.
Then
\[
\log M-H(\mu)\le 2M\|\mu-\chiM\|_2^2.
\]
\end{lem}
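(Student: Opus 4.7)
The plan is to recognize the left-hand side as a constant multiple of the Kullback--Leibler divergence of $\mu$ from $\chi_M$ and then apply the standard bound $D_{\mathrm{KL}} \le \chi^2$, where the chi-squared divergence is essentially $M\|\mu-\chi_M\|_2^2$. The whole argument rests on the elementary inequality $\ln x \le x - 1$ applied pointwise, so no heavy machinery is needed.

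Concretely, I would first use $\sum_n \mu(n) = 1$ (with $\mu$ supported in $[1,M]\cap\Z$) to rewrite
\[
\log M - H(\mu) \;=\; \sum_{n=1}^{M} \mu(n)\log\bigl(M\mu(n)\bigr) \;=\; \frac{1}{\ln 2}\sum_{n=1}^{M} \mu(n)\ln\bigl(M\mu(n)\bigr),
\]
using the convention $0\log 0 = 0$ so that only the terms with $\mu(n)>0$ contribute. Next, applying $\ln x \le x-1$ to $x = M\mu(n)$ termwise, I get
\[
\sum_{n=1}^{M}\mu(n)\ln(M\mu(n)) \;\le\; \sum_{n=1}^{M}\mu(n)\bigl(M\mu(n)-1\bigr) \;=\; M\sum_{n=1}^{M}\mu(n)^2 - 1.
\]
Finally, expanding the square and using $\sum_n \mu(n) = 1$ together with $\chi_M \equiv 1/M$ on $[1,M]$, I would verify the identity
\[
M\|\mu - \chi_M\|_2^2 \;=\; M\sum_{n=1}^{M}\Bigl(\mu(n) - \tfrac{1}{M}\Bigr)^2 \;=\; M\sum_{n=1}^M \mu(n)^2 - 1.
\]

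Combining these three observations yields $\log M - H(\mu) \le \frac{1}{\ln 2}\, M\|\mu - \chi_M\|_2^2$, and since $1/\ln 2 < 2$ the desired bound with constant $2$ follows immediately. There is essentially no obstacle here; the only sanity check is that the support assumption on $\mu$ makes the sums finite and lets the endpoint terms from $\sum \mu(n)=1$ collapse cleanly, and that the convention for $\mu(n)=0$ lets me restrict both the KL and the $\chi^2$ expansions to the same index set.
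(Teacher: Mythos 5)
Your proof is correct and is essentially the same as the paper's: the paper's inequality $x\log x \le \frac{1}{\ln2}(x-1)+\frac{1}{\ln2}(x-1)^2$ is, upon dividing through by $x$, exactly $\ln x \le x-1$, so both arguments reduce to the same pointwise bound and the same cancellation of the linear term via $\sum\mu(n)=1$. Your KL/$\chi^2$ framing is just a cleaner way of packaging the identical computation.
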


\begin{proof}
We note the inequality
\[
x\log x\le \frac{1}{\ln 2}(x-1)+\frac{1}{\ln2}(x-1)^2.
\]
We sustitue $x=My$ and obtain
\[
y\log(My)\le \frac{1}{\ln 2}\Big(y-\frac{1}{M}\Big)+\frac{1}{\ln2}M\Big(y-\frac{1}{M}\Big)^2
\]
and
\[
\frac{\log M}{M}+y\log y\le \Big(\frac{1}{\ln 2}-\log M\Big)\Big(y-\frac{1}{M}\Big)
+\frac{1}{\ln2}M\Big(y-\frac{1}{M}\Big)^2.
\]

We substitute $y=\mu(n)$ and sum the resulting inequality for $n\in [1,M]$.
Since $\sum\mu(n)=1$, the linear term cancels and we obtain the inequality
claimed in the lemma using
$1/\ln 2<2$.
\end{proof}

In the next lemma, we show that the convolution of two functions with small $L^2$ distance from
$\chiN$ does not vary much on intervals shorter than the support.
This is essentially a consequence of the Cauchy-Schwartz inequality.

\begin{lem}\label{lm:diff}
Let  $N$ be a positive integer and let $f,\wt f: \Z\to \R_{\ge0}$ be
two functions concentrated on $[1,N]$ such that $\|f\|_\infty,\|\wt f\|_\infty<2/N$.
Then
\[
|f*\wt f(n)-f*\wt f(n+m)|\le \frac{3m}{N^2}+2\|f-\chiN\|_2\|\wt f-\chiN\|_2
\]
for any $n,m\in \Z$.
\end{lem}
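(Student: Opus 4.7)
I would begin by decomposing around the uniform measure: set $u = f - \chi_N$ and $v = \wt f - \chi_N$. Since $0\le f,\wt f\le 2/N$ and $\chi_N=1/N$ on $[1,N]$, both $u$ and $v$ are supported on $[1,N]\cap\Z$ and take values in $[-1/N,1/N]$. Bilinearity of convolution then gives the decomposition
\[
f*\wt f = \chi_N*\chi_N + \chi_N*v + u*\chi_N + u*v,
\]
and I would bound the $m$-shift difference of each of the four summands separately.

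For the three pieces involving a factor of $\chi_N$, the key point is that $\chi_N*g(n) = \frac{1}{N}\sum_{k=n-N}^{n-1} g(k)$ is a running average, so for any $g$ supported on $[1,N]$ we have
\[
\chi_N*g(n+m)-\chi_N*g(n) = \frac{1}{N}\Big[\sum_{k\in A_1} g(k) - \sum_{k\in A_2}g(k)\Big],
\]
where $A_1=[n,n+m-1]$ and $A_2=[n-N,n-N+m-1]$ are length-$m$ integer intervals separated by distance exactly $N$. The combinatorial observation I would use is that, because the support of $g$ has length $N$, the sets $A_1\cap[1,N]$ and $A_2\cap[1,N]$ cover disjoint subsets of $[1,N]$ whose total cardinality is at most $m$; combined with $|g|\le 1/N$ this yields $|\chi_N*g(n+m)-\chi_N*g(n)|\le m/N^2$. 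Applying this once with $g=\chi_N$, once with $g=v$, and once with $g=u$ (rewriting $u*\chi_N=\chi_N*u$) contributes a total of $3m/N^2$.

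For the remaining term $u*v$, I would bound the shift difference by Cauchy--Schwarz directly:
\[
|u*v(n+m)-u*v(n)| = \Big|\sum_k u(k)[v(n+m-k)-v(n-k)]\Big| \le \|u\|_2\,\Big(\sum_j (v(j+m)-v(j))^2\Big)^{1/2},
\]
after a change of summation variable. The elementary bound $(a-b)^2\le 2a^2+2b^2$ then estimates the last factor by $2\|v\|_2$, giving a contribution of $2\|u\|_2\|v\|_2 = 2\|f-\chi_N\|_2\|\wt f-\chi_N\|_2$. Summing the four bounds yields the claim.

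The main obstacle is the combinatorial claim that $|A_1\cap[1,N]|+|A_2\cap[1,N]|\le m$: this is what produces the coefficient $1$ (rather than $2$) in the per-term bound and hence the $3m/N^2$ (rather than $5m/N^2$ or $6m/N^2$) in the final estimate. The point is that $A_1$ and $A_2$ are separated by $N$, which equals the length of the support of $g$, so they cannot both be entirely inside that support at any single $n$; one must verify this by a short case analysis on the relative position of $n$ with respect to $[1,N]$ and $[N+1,2N]$. Once this is in hand, everything else is routine.
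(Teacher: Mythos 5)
Your proposal is correct and follows essentially the same route as the paper: set $u=f-\chi_N$, $v=\wt f-\chi_N$, expand $f*\wt f=(\chi_N+u)*(\chi_N+v)$ into four terms, bound each of the three terms containing a factor of $\chi_N$ by $m/N^2$ via the observation that the shifted difference involves at most $m$ nonzero summands inside $[1,N]$, and bound the $u*v$ term by $2\|u\|_2\|v\|_2$ via Cauchy--Schwarz. The paper phrases the counting step slightly differently (it notes $\chi_N(n-k)-\chi_N(n+m-k)\neq 0$ forces $k\in[n,n+m)\cup[n-N,n+m-N)$, and that, combined with $k\in[1,N]$, leaves at most $m$ indices) and handles the fourth term by applying Cauchy--Schwarz to each of the two sums separately rather than via $(a-b)^2\le 2a^2+2b^2$, but these are cosmetic differences.
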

\begin{proof}
Set $h=f-\chiN$ and $\wt h=\wt f-\chiN$.
Then
\begin{align}
f*\wt f(n)-&f*\wt f(n+m)\nonumber\\
=&\sum_{k\in\Z} (\chiN(k)\chiN(n-k)-\chiN(k)\chiN(n+m-k))\label{eq:sum1}\\
&+\sum_{k\in\Z}(h(k)\chiN(n-k)-h(k)\chiN(n+m-k))\label{eq:sum2}\\
&+\sum_{k\in\Z}(\chiN(n-k)\wt h(k)-\chiN(n+m-k)\wt h(k))\label{eq:sum3}\\
&+\sum_{k\in\Z}(h(k)\wt h(n-k)-h(k)\wt h(n+m-k)).\label{eq:sum4}
\end{align}

We first consider the contribution of the first three sums.
Without loss of generality we assume that $m>0$.
It is easy to see that $\chi_N(n-k)-\chi_N(n+m-k)\neq 0$ implies that
either $n-k\le 0$ and $n+m-k>0$ or $n-k\le N$ and $n+m-k>N$.
Rearranging these inequalities we obtain that $k$ satisfies
either $n\le k<n+m$ or $n-N\le k<n+m-N$.
If a term corresponding to $k$ in one of \eqref{eq:sum1}--\eqref{eq:sum3}
is non-zero, then $k$ must satisfy one of these inequalities and also $1\le k\le N$,
otherwise $\chi_N(k)$, $h(k)$ and $\wt h(k)$ are zero.

This shows that each of \eqref{eq:sum1}--\eqref{eq:sum3} have at most $m$ non-zero
terms.
Each term is bounded above by $N^{-2}$, as
$\|\chi_N\|_\infty,\| h\|_\infty,\|\wt h\|_\infty\le1/N$.
Hence the total contribution
of the first three sums is at most $3mN^{-2}$.

We estimate the fourth term using the Cauchy-Schwartz inequality:
\[
\Big|\sum_{k\in\Z}(h(k)\wt h(n-k)-h(k)\wt h(n+m-k))\Big|\le 2\|h\|_2\|\wt h\|_2.
\]
\end{proof}

We will convert the information obtained in the previous lemma to a lower bound on  entropy
using Lemma \ref{lm:L2}.
This requires to normalize the convolution to obtain probability measures on intervals of length $M$.
We need to show that the error is not magnified too much, hence we need to show that the convolution
has enough mass on each such interval.
This is done using the next lemma.

\begin{lem}\label{lm:lower}
Let $N$ be a positive even integer and let $f,\wt f:\Z\to\R_{\ge0}$ be two functions concentrated on $[1,N]$.
Suppose that
\[
\|f-\chiN\|_2\le \frac{1}{10 N^{1/2}}\quad\text{and}\quad
\|\wt f-\chiN\|_2\le \frac{1}{10 N^{1/2}}.
\]
Then
\[
f*\wt f(n)\ge \frac{1}{4N}
\]
for all $N/2+1\le n\le 3N/2$.
\end{lem}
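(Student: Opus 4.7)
The plan is to write $f = \chi_N + h$ and $\tilde f = \chi_N + \tilde h$, where by hypothesis $\|h\|_2, \|\tilde h\|_2 \le 1/(10 N^{1/2})$, and then expand
\[
f * \tilde f = \chi_N * \chi_N + \chi_N * \tilde h + h * \chi_N + h * \tilde h.
\]
The main term $\chi_N * \chi_N$ provides a lower bound, while the three cross terms will be controlled by $L^2$ estimates.

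For the main term, I would compute $(\chi_N * \chi_N)(n)$ directly from the definition, obtaining $N^{-2}$ times the cardinality of $\{k \in \Z : 1 \le k \le N \text{ and } 1 \le n-k \le N\}$. This is the length of the integer interval $[\max(1, n-N), \min(N, n-1)]$, which a short case analysis shows has cardinality at least $N/2$ as soon as $n \in [N/2+1, 3N/2]$ (the extremes $n = N/2+1$ and $n = 3N/2$ give exactly $N/2$ and $N/2+1$ terms, respectively, and the count is unimodal in $n$). Hence $(\chi_N * \chi_N)(n) \ge 1/(2N)$ on the desired range.

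For the cross terms I would apply the Cauchy--Schwarz inequality, using $\|\chi_N\|_2 = N^{-1/2}$:
\[
|(h * \chi_N)(n)| \le \|h\|_2 \cdot \|\chi_N\|_2 \le \frac{1}{10N}, \qquad
|(\chi_N * \tilde h)(n)| \le \frac{1}{10N},
\]
and similarly
\[
|(h * \tilde h)(n)| \le \|h\|_2 \, \|\tilde h\|_2 \le \frac{1}{100 N}.
\]
Combining these four pointwise bounds yields, for $n \in [N/2+1, 3N/2]$,
\[
(f * \tilde f)(n) \ge \frac{1}{2N} - \frac{1}{10N} - \frac{1}{10N} - \frac{1}{100N} \ge \frac{1}{4N},
\]
which is exactly the desired inequality.

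No step here is a real obstacle: the only mildly delicate point is the count of overlap giving $(\chi_N * \chi_N)(n) \ge 1/(2N)$ on the symmetric window $[N/2+1, 3N/2]$, which is precisely the reason the lemma is stated on that window rather than on the full support $[2, 2N]$ of $\chi_N * \chi_N$ (where the main term would degenerate at the endpoints and could not absorb the $L^2$ error). The rest is a clean application of Cauchy--Schwarz.
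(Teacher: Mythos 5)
Your proof is correct and uses essentially the same decomposition and argument as the paper (expanding $f*\wt f$ about $\chiN*\chiN$ and controlling the cross terms by Cauchy--Schwarz). The paper applies Cauchy--Schwarz slightly more sharply, restricting to the $n-1$ integers where both $h$ and the shifted $\chiN$ can be nonzero (giving $(n-1)^{1/2}/(10N^{3/2})$), but your uniform bound $\|h\|_2\|\chiN\|_2 = 1/(10N)$ is simpler and still leaves enough margin: $1/(2N) - 2/(10N) - 1/(100N) = 29/(100N) > 1/(4N)$.
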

\begin{proof}
Set $h=f-\chiN$ and $\wt h=\wt f-\chiN$.
We write
\begin{align}
f*\wt f(n)=&\sum_{k\in\Z} \chiN(k)\chiN(n-k)
+\sum_{k\in\Z}h(k)\chiN(n-k)\nonumber\\
&+\sum_{k\in\Z}\chiN(n-k)\wt h(k)\label{eq:secondline}
+\sum_{k\in\Z}h(k)\wt h(n-k).
\end{align}

We assume without loss of generality that $n\le N+1$.
Then
\[
\sum_{k\in\Z} \chi_N(k)\chi_N(n-k)=(n-1)/N^2.
\]
By the Cauchy-Schwartz inequality,
\[
\Big|\sum_{k\in\Z}h(k)\chi_N(n-k)\Big|=\sum_{k=1}^{n-1}\frac{1}{N}|h(k)|
\le\frac{(n-1)^{1/2}}{N}\|h\|_2\le \frac{(n-1)^{1/2}}{10 N^{3/2}}.
\]
One can derive a similar estimate for the first sum in \eqref{eq:secondline}.
Finally, we estimate the last sum by
\[
\Big|\sum_{k\in\Z}h(k)\wt h(n-k)\Big|\le \|h\|_2\|\wt h\|_2\le\frac{1}{100 N}.
\]

We note that 
\[
t-\frac{t^{1/2}}{5}-\frac{1}{100}>\frac{1}{4}
\]
for all $t\ge1/2$.
We apply this with $t=(n-1)/N$, which completes the proof.
\end{proof}

\begin{proof}[Proof of Lemma \ref{lm:L2-conv}]
We write 
\[
\rho_a:=\rho|_{[a+1,a+M]}
\]
for each $a\in\Z$.
We recall the formula \eqref{eq:ent-on-Z2}:
\[
H(\rho;1|M)=\frac{1}{M}\sum_{a\in\Z} H(\rho_a).
\]

Write $I=[N/2,3N/2-M]\cap\Z$ and $p(a):=\|\rho_a\|_1$ for $a\in \Z$.
Then  $p(a)\ge M/4N$ for $a\in I$ by Lemma \ref{lm:lower}.

The average of the values of $\rho_a(x)$ for $x\in[a+1,a+M]\cap\Z$ is $p(a)/M$, hence
\[
|\rho_a(x)-p(a)/M|<\frac{3M}{N^2}+2 \|f-\chiN\|_2\cdot \|\wt f-\chiN\|_2
\]
for all $x\in[a+1,a+M]$ by Lemma \ref{lm:diff}.

Combining the above two observations, and writing $\chi_{a+1,a+M}$ for the normalized
counting measure on $[a+1,a+M]\cap\Z$, we obtain for $a\in I$
\begin{align*}
\|p(a)^{-1}\rho_a-\chi_{a+1,a+M}\|_2^2<&\frac{16N^2}{M^2}\cdot
2M\Big(\frac{9M^2}{N^4}+4 \|f-\chiN\|_2^2\cdot \|\wt f-\chiN\|_2^2\Big)\\
<&400\Big(\frac{M}{N^2}+\frac{N^2}{M}\|f-\chiN\|_2^2\cdot \|\wt f-\chiN\|_2^2\Big).
\end{align*}
We apply Lemma \ref{lm:L2} for the probability measure $p(a)^{-1}\rho_a$ and obtain
\[
H(\rho_a)\ge p(a)\log M - 
800p(a)M\Big(\frac{M}{N^2}+\frac{N^2}{M}\|f-\chiN\|_2^2\cdot \|\wt f-\chiN\|_2^2\Big).
\]

For $a\notin I$,
we use the trivial estimate $H(\rho_a)\ge0$.
The number of $a\notin I$ such that $p(a)>0$ is at most $2M$.
Since $\|\rho\|_\infty\le \|f\|_\infty\le 2/N$, we have $p(a)\le 2M/N$.
Thus
\[
\sum_{a\notin I}p(a)\le \frac{4M^2}{N}.
\]

Note that
\[
\sum_{a\in\Z}p(a)=\sum_{a\in\Z}\sum_{i=1}^M\rho(a+i)=M\sum_{a\in\Z}\rho(a)=M.
\]
Then
\[
M\ge \sum_{a\in I}p(a)\ge M\|\rho\|_1-\frac{4M^2}{N}.
\]

We combine our estimates and obtain
\[
H(\rho;1|M)\ge\Big(\|\rho\|_1-\frac{4M}{N}\Big)\log M-
800\Big(\frac{M^2}{N^{2}}+N^2\|f-\chiN\|_2^2\cdot \|\wt f-\chiN\|_2^2\Big).
\]\footnote{The constant $C_1$ in the lemma needs to satisfy $C_1M\log M/N\ge 4M\log M/N+800 M^2/N^2$
so $C_1=1000$ works.}
\end{proof}

\subsection{}
\label{sc:L2L1}

The purpose of this section is to estimate the entropy of the convolution of a function
that is near constant in $L^2$-norm and one that has small $L^1$ norm.

\begin{lem}\label{lm:L2-L1-conv}
Let $N$ be a positive integer and let
$f,g:\Z\to \R_{\ge0 }$ be two functions concentrated on $[1,N]$ such that $1/2\le \|f\|_1\le1$ and 
$\|f\|_\infty\le2/N$.
Suppose that $2|N$ and let $M|N$.
Suppose further $\|g\|_1\le1$.
Let $\rho = (f*g)|_{[N/2+1,3N/2]}$.

Then
\[
\|\rho \|_1\log M-H(\rho;1|M)\le   8 N\|f-\chiN\|_2^2\|g\|_1+6\frac{M\log M}{N}.
\]
\end{lem}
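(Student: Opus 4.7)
I will adapt the scheme of Lemma~\ref{lm:L2-conv} to the asymmetric situation where $g$ is only $L^1$-controlled. The main obstacle is that the per-block masses $p_a:=\|\rho_a\|_1\le 2M\|g\|_1/N$ (with $\rho_a:=\rho|_{[a+1,a+M]}$) can be much smaller than $1$, so invoking Lemma~\ref{lm:L2} and multiplying through by $p_a$ as in the proof of Lemma~\ref{lm:L2-conv} would lose a factor of order $N/M$. I would bypass this by first proving an \emph{unnormalized} variant of Lemma~\ref{lm:L2}: for every non-negative measure $\mu$ on $[1,M]\cap\Z$ with $\|\mu\|_1\le 1$,
\[
\|\mu\|_1\log M-H(\mu)\le\frac{M}{\ln 2}\Bigl\|\mu-\frac{\|\mu\|_1}{M}\one_{[1,M]}\Bigr\|_2^2.
\]
This follows by substituting $x=M\mu(n)$ in the pointwise inequality $x\ln x\le(x-1)+(x-1)^2$ (valid for every $x\ge 0$) and summing over $n\in[1,M]$; the hypothesis $\|\mu\|_1\le 1$ exactly eliminates the linear correction that would otherwise remain.

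I would first discard the trivial case $M=N$ (for which $\|\rho\|_1\log M\le 2\log N\le 6M\log M/N$) and assume $M\le N/2$, so that $p_a\le 1$ for every $a$. Writing $f=\chi_N+h$, we get $\rho=\tau+\eta$ on $J:=[N/2+1,3N/2]$ with $\tau:=(\chi_N\ast g)\one_J\ge 0$ and $\eta:=(h\ast g)\one_J$. Applying the unnormalized variant per block, using $(A+B)^2\le 2A^2+2B^2$ after centering around $p_a/M=\bar\tau_a+\bar\eta_a$, and invoking the formula~\eqref{eq:ent-on-Z2}, we get
\[
\|\rho\|_1\log M-H(\rho;1|M)\le\frac{2}{\ln 2}\sum_a\Bigl(\sum_{n\in[a+1,a+M]}(\tau(n)-\bar\tau_a)^2+\sum_{n\in[a+1,a+M]}(\eta(n)-\bar\eta_a)^2\Bigr),
\]
where $\bar\tau_a,\bar\eta_a$ denote the per-block means.

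The $\eta$-sum is handled by the variance bound $\sum_n(\eta(n)-\bar\eta_a)^2\le\sum_n\eta(n)^2$, interchanging the order of summation (each $n$ lies in exactly $M$ blocks), and Young's inequality $\|h\ast g\|_2\le\|h\|_2\|g\|_1$; this contributes at most $M\|f-\chi_N\|_2^2\|g\|_1^2$. For the $\tau$-sum, a direct computation gives $|\tau(n+1)-\tau(n)|\le(|g(n)|+|g(n-N)|)/N$ in the bulk and boundary jumps at $n\in\{N/2,3N/2\}$ of size at most $\|g\|_1/N$, so the total variation of $\tau$ on $\Z$ is at most $4\|g\|_1/N$. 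Together with the pointwise bound $V_a:=\max_{[a+1,a+M]}\tau-\min_{[a+1,a+M]}\tau\le\|\tau\|_\infty\le\|g\|_1/N$ and $\sum_aV_a\le(M-1)\cdot(\text{total variation})$, this yields
\[
\sum_a M V_a^2\le M\bigl(\max_a V_a\bigr)\bigl(\sum_a V_a\bigr)\le 4M^2\|g\|_1^2/N^2.
\]

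Combining and using $\|g\|_1^2\le\|g\|_1$:
\[
\|\rho\|_1\log M-H(\rho;1|M)\le\frac{2}{\ln 2}\Bigl(\frac{4M^2\|g\|_1}{N^2}+M\|f-\chi_N\|_2^2\|g\|_1\Bigr),
\]
which under $M\le N/2$ (so that $M/N\le\log M$ for $M\ge 2$) is dominated by $6M\log M/N+8N\|f-\chi_N\|_2^2\|g\|_1$. The hardest step is the unnormalized variant of Lemma~\ref{lm:L2}; the remainder is careful bookkeeping with Young's inequality and the variation of $\chi_N\ast g$.
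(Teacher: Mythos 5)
Your key lemma -- the ``unnormalized variant of Lemma~\ref{lm:L2}'' with centering at $\|\mu\|_1/M$ -- is false. Take $M=2$ and $\mu=(0,\tfrac12)$, so $\|\mu\|_1=\tfrac12$. Then $\|\mu\|_1\log M-H(\mu)=\tfrac12\cdot1-\tfrac12\cdot H\big((0,1)\big)=\tfrac12$, while
\[
\frac{M}{\ln 2}\Big\|\mu-\tfrac{\|\mu\|_1}{M}\one_{[1,2]}\Big\|_2^2
=\frac{2}{\ln 2}\Big(\tfrac1{16}+\tfrac1{16}\Big)=\frac{1}{4\ln 2}\approx0.36<\tfrac12.
\]
The derivation you sketch actually proves something different: substituting $x=M\mu(n)$ into $x\ln x\le(x-1)+(x-1)^2$ and summing yields centering at $1/M$ (not $p/M$); after adding back the $-p\log p$ piece from the convention $H(\mu)=pH(p^{-1}\mu)$ and noting $-p\ln p+p-1\le 0$ for $p\le1$, one gets the \emph{true} inequality $p\log M-H(\mu)\le\frac{M}{\ln2}\|\mu-\frac1M\one_{[1,M]}\|_2^2$. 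But this version is useless in your application: each block $\rho_a$ has mass $p_a\approx M\|g\|_1/N\ll1$, so $(\rho(n)-1/M)^2\approx 1/M^2$, and summing over the $\approx N$ occupied blocks gives a right-hand side of order $N/M$, which is far larger than $\log M$. The obstruction is structural: a Pinsker-type inequality for a measure of small mass $p$ must carry a factor $1/p$, as one sees by applying Lemma~\ref{lm:L2} to $p^{-1}\mu$, and your block-by-block scheme cannot sidestep this because the $p_a$ really are small.

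The paper never confronts this problem because it does not try to control the $L^2$ structure of $f\ast g$ block by block. Instead it proves Lemma~\ref{lm:L2-2} -- a bound on $\|f\|_1\log M - H(f;1|M)$ using only the hypothesis $\|f\|_1\ge 1/2$ (so the normalization is harmless) -- then invokes the monotonicity of conditional entropy under convolution, $H(f\ast g;1|M)\ge\|g\|_1 H(f;1|M)$ (the scaled form of Lemma~\ref{lm:entropy-conv-nondecrease}), to transfer the bound to $f\ast g$ without ever inspecting its pointwise shape. The restriction to $[N/2+1,3N/2]$ is then handled by a short mass-accounting step: $\eta_a=\rho_a$ for $a\in[N/2,3N/2-M]$, and the total mass of the discarded boundary blocks is $\le 4M/N$ (using $\|f\ast g\|_\infty\le 2/N$), costing only $O(M\log M/N)$ in entropy. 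If you want to salvage your decomposition $\rho=\tau+\eta$, you would have to restore the $1/p_a$ factor and argue that $p_a\ge c\|\tau_a\|_1$ with $\tau_a$ not too small in the bulk -- which is essentially re-deriving the paper's approach the hard way.
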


We begin by recording a consequence of Lemma \ref{lm:L2}.

\begin{lem}\label{lm:L2-2}
Let $N$ be a positive integer and let 
$f:\Z\to \R_{\ge0}$ be a function concentrated on $[1,N]$ such that $1/2\le \|f\|_1\le 1$ and 
$\|f\|_\infty\le 2/N$.
Then for any $M|N$, we have
\[
\|f\|_1\log M-H(f;1|M)\le    8 N\|f-\chiN\|_2^2+2\frac{M}{N}.
\]
\end{lem}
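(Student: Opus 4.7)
The plan is to deduce the lemma from Lemma \ref{lm:L2} applied to the normalized probability measure $\wt f := \|f\|_1^{-1}f$, followed by straightforward bookkeeping in two steps: (i) converting Shannon entropy at scale $1$ into the conditional entropy $H(\cdot\,;1|M)$, and (ii) passing from the deviation $\|\wt f - \chiN\|_2$ of the normalized measure that appears in Lemma \ref{lm:L2} to the unnormalized deviation $\|f-\chiN\|_2$ in the statement.

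For step (i), since $\wt f$ is supported on $\Z$ one has $H(\wt f;1) = H(\wt f)$, so Lemma \ref{lm:L2} yields $\log N - H(\wt f) \le 2N\|\wt f - \chiN\|_2^2$. On the other hand, since $\wt f$ is concentrated on $[1,N]$ with $M \mid N$, for each $t\in[0,1]$ the integer random variable $\lfloor X/M + t\rfloor$ takes at most $N/M+1$ distinct values, giving $H(\wt f;M) \le \log(N/M+1) \le \log(N/M) + (M/N)/\ln 2$ via $\log_2(1+x)\le x/\ln 2$. Subtracting the two bounds and using $H(\wt f;1|M) = H(\wt f;1) - H(\wt f;M)$ produces
\[
\log M - H(\wt f;1|M) \le 2N\|\wt f - \chiN\|_2^2 + \frac{M}{N\ln 2}.
\]
Multiplying through by $\|f\|_1$ and invoking the non-probability convention $H(f;r_1|r_2) = \|f\|_1 H(\wt f;r_1|r_2)$ from Section \ref{sc:non-prob} rescales this to an estimate on $\|f\|_1\log M - H(f;1|M)$.

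For step (ii), expanding both squares and using $\langle f,\chiN\rangle = \|f\|_1/N$ and $\|\chiN\|_2^2 = 1/N$ yields the exact identity
\[
\|f-\chiN\|_2^2 \;-\; \|f - \|f\|_1\chiN\|_2^2 \;=\; (1-\|f\|_1)^2/N \;\ge\; 0,
\]
and therefore $\|f\|_1\|\wt f-\chiN\|_2^2 = \|f\|_1^{-1}\|f-\|f\|_1\chiN\|_2^2 \le \|f\|_1^{-1}\|f-\chiN\|_2^2 \le 2\|f-\chiN\|_2^2$, using the hypothesis $\|f\|_1\ge 1/2$. Since $\|f\|_\infty\le 2/N$ and $f$ is supported on $[1,N]$ also forces $\|f\|_1\le 2$, combining steps (i) and (ii) delivers the stated inequality with the advertised constants.

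The argument is essentially routine; the only point requiring a small amount of care is step (ii), where one must use the exact Pythagorean-style identity above rather than a naive triangle inequality on $\|\wt f-\chiN\|_2$, which would produce a bound involving $(1-\|f\|_1)^2/\|f\|_1^2$ and lose a constant factor when $\|f\|_1$ is near $1/2$.
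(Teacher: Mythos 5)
Your proof is correct and follows essentially the same route as the paper: normalize $f$ to the probability measure $\wt f = \|f\|_1^{-1}f$, apply Lemma \ref{lm:L2}, convert to conditional entropy via the support bound $H(\wt f;M)\le\log(N/M+1)$, and return to $f$ using the orthogonality of $\wt f-\chiN$ to $\chiN$ (your Pythagorean identity $\|f-\chiN\|_2^2-\|f-\|f\|_1\chiN\|_2^2=(1-\|f\|_1)^2/N$ is exactly the paper's step $\|\wt f-\chiN\|_2^2\le\|f\|_1^{-2}\|f-\chiN\|_2^2$ in disguise). One small bookkeeping caveat: in your final assembly you invoke only $\|f\|_1\le 2$, which gives the error term $\|f\|_1(M/N)/\ln 2\le (2/\ln 2)M/N\approx 2.9\,M/N$ rather than the stated $2M/N$; the paper's own last step similarly relies implicitly on $\|f\|_1\le 1$ (which holds in every application since $f$ comes from decomposing a probability measure), so this is at most a cosmetic discrepancy in constants, not a genuine gap.
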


\begin{proof}
Let $X$ be a random variable with law $\|f\|_1^{-1} f$.
We note
\[
\|\|f\|_1^{-1}f-\chi_N\|_2^2\le
\|\|f\|_1^{-1}f-\chi_N\|_2^2+\|(1-\|f\|_1^{-1})\chi_N\|_2^2=\|f\|_1^{-2}\|f-\chi_N\|_2^2,
\]
since $\|f\|_1^{-1}f-\chi_N$ is orthogonal to $\chi_N$.
We apply Lemma \ref{lm:L2} with $M=N$ for the law of $X$, and
obtain
\[
H(X)\ge \log N -  2N\|f\|_1^{-2}\|f-\chiN\|_2^2\ge \log N - 8 N\|f-\chiN\|_2^2.
\]

We can write using \eqref{eq:ent-on-Z1}
\begin{align*}
\|f\|_1^{-1}H(f;1|M)=&H(X;1|M)
=\frac{1}{M}\sum_{a=0}^{M-1} H\Big(X+a\Big|\Big\lfloor \frac{X+a}{M}\Big\rfloor\Big)\\
\ge& H(X)-\frac{1}{M}\sum_{a=0}^{M-1}H\Big(\Big\lfloor \frac{X+a}{M}\Big\rfloor\Big).
\end{align*}
The random variable $\lfloor (X+a)/M\rfloor$ may take at most $N/M+1$ different
values, hence $H(\lfloor (X+a)/M\rfloor)\le \log(N/M+1)$.

Combining our estimates, we obtain
\[
\|f\|_1^{-1}H(f;1|M)\ge \log N - 8 N\|f-\chiN\|_2^2 - \log(N/M+1).
\]

Since $\log(N/M+1)\le \log(N/M)+2 M/N$, we have
\[
\|f\|_1^{-1}H(f;1|M)\ge \log M - 8 N\|f-\chi_N\|_2^2 -2M/N.
\]
This proves the lemma.
\end{proof}

Lemma \ref{lm:L2-2} implies the conclusion of Lemma \ref{lm:L2-L1-conv} with $\rho$
replaced by $f$.
Since convolution may only increase entropy between scales of integral ratio,
it also implies the claim with $\rho$ replaced by $f*g$.
To conclude the proof of Lemma \ref{lm:L2-L1-conv},
it is left to consider the effect of taking restriction
to $[N/2+1,3N/2]$.

\begin{proof}[Proof of Lemma \ref{lm:L2-L1-conv}]
Write for $a\in \Z$
\[
\eta_a:=(f*g)|_{[a+1,a+M]}, \quad \rho_a:=\rho|_{[a+1,a+M]}.
\]
With this notation, we can write using \eqref{eq:ent-on-Z2}
\[
H(f*g;1|M)=\frac{1}{M}\sum_{a\in\Z}H(\eta_a), \quad
H(\rho;1|M)=\frac{1}{M}\sum_{a\in\Z}H(\rho_a).
\]

We observe that for $N/2\le a\le 3N/2-M$ we have $\eta_a=\rho_a$.
Using the trivial estimate $H(\eta_a)\le \|\eta_a\|_1\log M$ we can write
\[
H(\rho;1|M)\ge H(f*g;1|M) - \frac{\log M}{M}\sum_{a\in\Z\backslash[N/2,3N/2-M]} \|\eta_a\|_1.
\]

We use now that convolving $f$ by $g$ may only increase its entropy (up to normalization),
and then apply Lemma \ref{lm:L2-2}:
\[
H(f*g;1|M)\ge \|g\|_1H(f;1|M)\ge \|f*g\|_1\log M - 8 N\|f-\chiN\|_2^2\|g\|_1-2\frac{M}{N}.
\]

We note that
\begin{align*}
\frac{1}{M} \sum_{a\in\Z\backslash[N/2,3N/2-M]} \|\eta_a\|_1
\le& \sum_{n\in \Z\backslash [N/2+M,3N/2-M+1]}f*g(n)\\
\le&\|f*g\|_1-\|\rho\|_1+(2M-2)\|f*g\|_\infty.
\end{align*}

We add that $\|f*g\|_\infty\le 2/N$ and combine the above estimates:
\begin{align*}
H(\rho;1|M)\ge& \|f*g\|_1\log M - 8 N\|f-\chiN\|_2^2\|g\|_1-2\frac{M}{N}\\
&-\Big(\|f*g\|_1-\|\rho\|_1+\frac{4M}{N}\Big)\log M\\
\ge&\|\rho\|_1\log M - 8 N\|f-\chiN\|_2^2\|g\|_1-6 \frac{M\log M}{N}.
\end{align*}
\end{proof}

\subsection{Proof of Proposition \ref{pr:restricted-conv}}
\label{sc:restricted-conv}

We first consider the case, when $\log N-H(\mu)<1/200$ and
$\log N-H(\wt\mu)<1/200$.
We apply Lemma \ref{lm:decomp} to the measures $\mu$ and $\wt\mu$ and write
$\mu=f+g$ and $\wt\mu=\wt f+\wt g$ such that
\begin{align}
\label{eq:f-conditions}
\|f-\chiN\|_2^2&\le2\frac{\log N-H(\mu)}{N}, \qquad \|f\|_\infty\le\frac{2}{N},\\
\label{eq:g-conditions}
\|g\|_1&\le2 (\log N-H(\mu))<\frac12,\\
\label{eq:ftilde-conditions}
\|\wt f-\chiN\|_2^2&\le2\frac{\log N-H(\wt \mu)}{N}, \qquad \|\wt f\|_\infty\le\frac{2}{N},\\
\label{eq:gtilde-conditions}
\|\wt g\|_1&\le2 (\log N-H(\wt\mu))<\frac12.
\end{align}
Since $\mu=f+g$ and both $f$ and $g$ are non-negative, we have $\|f\|_1+\|g\|_1=1$, hence $\|f\|_1,\|g\|_1\le1$
and we have $\|f\|_1> 1/2$ by \eqref{eq:g-conditions}.

We put $\rho_1=(f*\wt f)|_{[N/2+1,3N/2]}$, $\rho_2=(f*\wt g)|_{[N/2+1,3N/2]}$
and $\rho_3=(g*\wt f)|_{[N/2+1,3N/2]}$.
The conditions of Lemmata \ref{lm:L2-conv} and \ref{lm:L2-L1-conv} hold,
hence we can write
\begin{align*}
H(\rho_i;1|M)\ge&\|\rho_i\|_1\log M -  C(\log N-H(\mu))(\log N -H(\wt\mu))\quad\footnotemark\\ 
&-C \frac{M\log M}{N}\quad\footnotemark
\end{align*}
\addtocounter{footnote}{-1}
\footnotetext{$C_2=4000$. If $i=1$, we apply Lemma \ref{lm:L2-conv} and we need $C_2\ge C_1\cdot 2^2=4000$.
If $i=2,3$, we apply Lemma \ref{lm:L2-L1-conv} and we need $C_2\ge 8\cdot2\cdot2$.
Here $C_1$ is the constant from Lemma \ref{lm:L2-conv} on page \pageref{page:C1}.}
\stepcounter{footnote}
\footnotetext{$C_3=1000$, which is the maximum of $C_1$ and the constant $6$ from Lemma \ref{lm:L2-L1-conv}.}
for $i=1,2,3$.

We note that
\[
\|\sigma\|_1-(\|\rho_1\|_1+\|\rho_2\|_1+\|\rho_3\|_1)\le\|g*\wt g\|_1
\le4(\log N-H(\mu))(\log N -H(\wt\mu)).
\]
This proves the proposition, since
\[
\s=\rho_1+\rho_2+\rho_3+(g*\wt g)|_{[N/2+1,3N/2]},
\]
and entropy between scales of integral ratio is superadditive by Lemma~\ref{lm:conditional-super-additive}.
\footnote{The constant $C_4$ in the proposition needs to absorb $3\cdot C_2$ coming from the lower bound on $H(\rho_i;1|M)$
for $i=1,2,3$ plus $4$, which comes from $(\|\s\|_1-(\|\rho_1\|_1+\|\rho_2\|_1+\|\rho_3\|_1))\log M$.
This holds for $C_4=4\cdot 10^4$.
The constant $C_5=3000$, needs to satisfy $C_5\ge 3\cdot C_3$.}

Next, we consider the case, when $\log N-H(\mu)<1/200$ and
$\log N-H(\wt\mu)\ge1/200$.
We apply Lemma \ref{lm:decomp} to the measure $\mu$ and write
$\mu=f+g$ with functions $f$ and $g$ that satisfy \eqref{eq:f-conditions}
and $\eqref{eq:g-conditions}$.

We put $\rho=(f*\wt\mu)|_{[N/2+1,3N/2]}$ and apply Lemma \ref{lm:L2-L1-conv}
to get
\[
H(\rho;1|M)\ge \|\rho\|_1\log M-16(\log N-H(\mu))
-6\frac{M\log M}{N}.
\]

We note that
\[
\|\sigma\|_1-\|\rho\|_1\le\|g\|_1\le 2 (\log N-H(\mu)).
\]

We consider the identity $\sigma=\rho+(g*\wt\mu)_{[N/2+1,3N/2]}$
and conclude the claim by superadditivity of entropy.
\footnote{The constant $C_4=4\cdot10^4$ in the proposition needs to absorb $16/(\log N-H(\wt\mu))\le16\cdot 200=3200$ plus $2/(\log N-H(\wt\mu))\le 400$.
For $C_5$ we only need $C_5\ge 6$.}

The case $\log N-H(\mu)\ge1/200$ and
$\log N-H(\wt\mu)<1/200$ is the same as the previous.
If $\log N-H(\mu)\ge1/200$ and
$\log N-H(\wt\mu)\ge1/200$, then the proposition is vacuous.
\footnote{It is vacuous, because $C_4\ge200^2$.}

\subsection{Proof of Proposition \ref{pr:high-ent-Z}}
\label{sc:proof-high-ent-Z}

For each $a,b\in \Z$ write $f_a=\nu|_{[a+1,a+N]}$, $\wt f_b=\wt\nu|_{[b+1,b+N]}$.
By \eqref{eq:ent-on-Z2} we have
\begin{equation}\label{eq:ent-nu}
H(\nu;1|N)=\frac{1}{N}\sum_{a\in \Z}H(f_a),\quad
H(\wt\nu;1|N)=\frac{1}{N}\sum_{b\in \Z}H(\wt f_b).
\end{equation}

We put
\[
\s_{a,b}=f_a*\wt f_b|_{[a+b+N/2+1,a+b+3N/2]}.
\]
We will show below that
\begin{equation}\label{eq:3-4}
\frac{3}{4}N^2\nu*\wt\nu=\sum_{a,b\in \Z}\s_{a,b}.
\end{equation}

Taking \eqref{eq:3-4} for granted, we complete the proof.
We apply Proposition \ref{pr:restricted-conv} for the probability measures
$\mu=\|f_a\|^{-1}f_a*\d_{-a}$ and $\wt \mu=\|\wt f_b\|^{-1}\wt f_b*\d_{-b}$
and obtain
\begin{align*}
H(\s_{a,b};1|M)\ge& \|\s_{a,b}\|_1 \log M\\
&-C\log M (\|f_a\|_1\log N-H(f_a))(\|\wt f_b\|_1\log N-H(\wt f_b))\quad\footnotemark\\
&-C\|f_a\|_1\|\wt f_b\|_1\frac{M\log M}{N}.\quad\footnotemark
\end{align*}
\addtocounter{footnote}{-1}
\footnotetext{$C_4=4\cdot 10^4$, (see page \pageref{page:C4}).}
\stepcounter{footnote}
\footnotetext{$C_5=3000$, (see page \pageref{page:C5}).}
\addtocounter{footnote}{-2}

We sum the above inequality for $a,b\in\Z$.
We use \eqref{eq:3-4} and superadditivity of entropy to conclude
\begin{align*}
\frac{3}{4}N^2&H(\nu*\wt \nu;1|M)\ge\sum_{a,b\in\Z}\|\s_{a,b}\|_1\log M\\
&-C\log M \sum_{a,b\in\Z}(\|f_a\|_1\log N-H(f_a))(\|\wt f_b\|_1\log N-H(\wt f_b))\quad\footnotemark\\
&-C\sum_{a,b\in\Z}\|f_a\|_1\|\wt f_b\|_1\frac{M\log M}{N}.\quad\footnotemark
\end{align*}

\addtocounter{footnote}{-2}

We use  \eqref{eq:ent-nu}, $\sum\|f_a\|_1=N$, $\sum\|\wt f_b\|_1=N$ and
$\sum\|\s_{a,b}\|_1=3N^2/4$.
The latter is a consequence of \eqref{eq:3-4}.
We obtain
\begin{align*}
\frac{3}{4}N^2&H(\nu*\wt \nu;1|M)\ge\frac{3}{4}N^2\log M\\
&-C\log M (N\log N-NH(\nu;1|N))(N\log N-NH(\wt \nu;1|N))\quad\footnotemark\\
&-CN^2\frac{M\log M}{N},\quad\footnotemark
\end{align*}
which proves the claim upon dividing both sides by $3N^2/4$.
\footnote{We need the constants $C_6$ and $C_7$ in the proposition (see page \pageref{page:C6})
to satisfy $C_6=6\cdot 10^4\ge (4/3)C_4$ and $C_7=4000\ge(4/3)C_5$.}

It is left to prove \eqref{eq:3-4}.
We note that both sides of \eqref{eq:3-4} are linear in both $\nu$ and $\wt \nu$, therefore
it is enough to prove it for $\nu=\delta_x$ and $\wt\nu=\delta_y$ for every $x,y\in\Z$.
In this case, $\nu*\wt\nu=\d_{x+y}$.
In addition, we have
$\s_{a,b}= \d_{x+y}$ if the three conditions
\begin{align*}
x\in&[a+1,a+N],\\
y\in&[b+1,b+N],\\
x+y\in&[a+b+N/2+1,a+b+3N/2]
\end{align*}
hold, and $\s_{a,b}=0$ otherwise.
It is easy to see that for any $x,y$, there are $3N^2/4$ choices of pairs $(a,b)\in\Z^2$ such that
$\s_{a,b}=\d_{x,y}$ and this proves \eqref{eq:3-4}.

\subsection{}
\label{sc:proof-high-ent-th}
The purpose of this section is to explain the reduction of Theorem \ref{th:high-entropy-convolution}
to Proposition \ref{pr:high-ent-Z}.

\begin{proof}[Proof of Theorem \ref{th:high-entropy-convolution}]
We put $K=\lfloor 3\log\a^{-1}\rfloor$.
By rescaling $\mu$ and $\wt \mu$ if necessary, we may assume that $r=2^K$.
We define the probability measures $\nu$ and $\wt\nu$ on $\Z$ by
\[
\nu(n)=\mu([n,n+1)),\quad \wt\nu(n)=\wt\mu([n,n+1)).
\]

By \eqref{equation:interpret2} applied twice, $H(\mu;1|2^{2K+1})$ is the average of 
\[
H(\{\mu([n+t,n+1+t))\}_{n\in\Z};1|2^{2K+1})
\]
with $t$ running over $[0,1)$,
and a similar relation holds for $\wt\mu$.
Therefore, by replacing $\mu$ or $\wt\mu$ or both by suitable translates,
we may assume, that
\begin{align*}
H(\nu;1|2^{2K+1})\ge& H(\mu;1|2^{2K+1})\\
H(\wt\nu;1|2^{2K+1})\ge& H(\wt\mu;1|2^{2K+1}).
\end{align*}
Using the hypothesis $H(\mu;r|2r)\ge 1-\a$ for $r=2^a$ for $a=0,\ldots, 2K$
(and similar inequalities for $\wt \mu$) these yield
\begin{align}
H(\nu;1|2^{2K+1})\ge& (2K+1)(1-\a),\label{eq:alpha1}\\
H(\wt\nu;1|2^{2K+1})\ge& (2K+1)(1-\a).\label{eq:alpha2}
\end{align}

Lemma \ref{lm:perturb} gives
\begin{equation}\label{eq:large-conv1}
|H(\mu*\wt\mu;2^K|2^{K+1})-H(\nu*\wt\nu;2^K|2^{K+1})|\le 4 \frac{K}{2^{K}}.
\end{equation}

We use Proposition \ref{pr:high-ent-Z} with $M=2^{K+1}$ and $N=2^{2K+1}$ and get
\begin{align*}
K+&1-H(\nu*\wt\nu;1|2^{K+1})\\
\le& CK(2K+1-H(\nu;1|2^{2K+1}))\times(2K+1-H(\wt\nu;1|2^{2K+1}))\quad\footnotemark\\
&+C\frac{K}{2^K}.\quad\footnotemark
\end{align*}
\addtocounter{footnote}{-1}
\footnotetext{$C_8=1.2\cdot 10^5$. Indeed, we need $KC_8\ge (K+1) C_6$. For $C_6$ see page \pageref{page:C6}.}
\stepcounter{footnote}
\footnotetext{$C_{9}=8000$. Indeed, we need $KC_{9}\ge (K+1)C_7$. For $C_7$ see page \pageref{page:C7}.}
We combine this with
\begin{align*}
H(\nu*\wt\nu;2^K|2^{K+1})=&H(\nu*\wt\nu;1|2^{K+1})-H(\nu*\wt\nu;1|2^K)\\
\ge& H(\nu*\wt\nu;1|2^{K+1})-K
\end{align*}
and with \eqref{eq:alpha1}--\eqref{eq:alpha2} and write
\begin{align}\label{eq:large-conv2}
1-H(\nu*\wt\nu;2^K|2^{K+1})\le K+1-H(\nu*\wt\nu;1|2^{K+1})
\le CK^3\a^2\quad\footnotemark\\
+C\frac{K}{2^{K}}.\quad\footnotemark\nonumber
\end{align}
\addtocounter{footnote}{-1}
\footnotetext{$C_{10}=1.1\cdot 10^6$. Indeed, we need $K^2C_{10}\ge (2K+1)^2 C_8$.}
\stepcounter{footnote}
\footnotetext{$C_{9}=8000$.}
By the choice of $K=\lfloor 3\log\a^{-1}\rfloor$ and $\a<1/2$,
we have $2^{-K}<\a^2$ and the claim follows
if we combine \eqref{eq:large-conv1} and \eqref{eq:large-conv2}. 
\footnote{For the constant $C$ in the theorem, we need $C\ge27C_{10}+3(C_{9}+4)$, which holds if we set $C=10^{8}$.
Here the factor $27$ comes from the estimate $K^3\le27(\log \a^{-1})^3$ and the factor $3$ comes from $K\le3(\log \a^{-1})^3$.}
\end{proof}

\section{Entropy of convolutions in the low entropy regime}
\label{sc:low-entropy-convolution}

The purpose of this section is to prove Theorem \ref{th:low-entropy-convolution}, which we
restate.

\begin{thm*}
For every $0<\a<1/2$, there is a number $c>0$ such that the following holds.
Let $\mu,\nu$ be two compactly supported probability measures on $\R$.
Let  $\s_2<\s_1<0$ and $0< \b\le1/2$ be real numbers.
Suppose that 
\begin{equation}\label{eq:few-uniform-scales}
\cN_1\{ \s\in[\s_2,\s_1]:H(\mu;2^\s|2^{\s+1})>1-\a\}<c\b(\s_1-\s_2).
\end{equation}
Suppose further that
\[
H(\nu; 2^{\s_2}|2^{\s_1})>\b(\s_1-\s_2).
\]

Then
\[
H(\mu*\nu; 2^{\s_2}|2^{\s_1})> H(\mu;2^{\s_2}|2^{\s_1})+c\b(\log\b^{-1})^{-1}(\s_1-\s_2)-3.
\]
\end{thm*}

Our proof of Theorem \ref{th:low-entropy-convolution} is motivated by some
ideas of Bourgain in his second proof of the discretized ring conjecture
\cite{Bou-discretized2}.
The proof relies on the following
two propositions.
We call a measure a Bernoulli measure, if it is supported on two points,
which have equal weight (not necessarily $1/2$, unless it is a probability measure).

\begin{prp}\label{pr:conv-by-Bernoulli}
Let $\mu$  be a compactly supported probability measure on $\R$ and
let $t,r_1,r_2>0$ be numbers.
Let $\nu$ be a Bernoulli probability measure supported on two points at distance $t$.
Then
\[
H(\mu*\nu;r_2|r_1)\ge H(\mu;r_2|r_1)+\frac{1}{3}(1-H(\mu;t|2t)),
\]
provided
\[
r_2\le t(1-H(\mu;t|2t))/10\quad \text{and}\quad r_1\ge 144t(1-H(\mu;t|2t))^{-2}.
\]
\end{prp}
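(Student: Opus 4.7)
Via Lemma \ref{lm:second-def} I rewrite the left-hand side as $\Delta(r_2) - \Delta(r_1)$, where
\[
\Delta(s) := H(\mu*\nu;s) - H(\mu;s) = H(\mu*\nu*I_s) - H(\mu*I_s)
\]
and $I_s$ is uniform on $[0,s]$. By translation invariance I may take $\nu = \tfrac12(\delta_0 + \delta_t)$, and I write $\delta := 1 - H(\mu;t|2t)$ for the deficit of $\mu$ at scale $t|2t$.

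The first observation is that $\Delta(\,\cdot\,)$ is essentially non-increasing. Applying the submodularity inequality of Proposition \ref{ruzsa} with $X \sim \nu$, $Y \sim \mu * I_s$, and $Z$ uniform on $\{0,s,2s,\ldots,(N-1)s\}$ (so that $I_s + Z$ is uniform on $[0,Ns]$) yields $\Delta(Ns) \le \Delta(s)$ at integer ratios; the Lipschitz estimate of Lemma \ref{lemma:Lipschitz} covers non-integer ratios with a controlled error. In parallel, Lemma \ref{lm:perturb} applied to the coupling $X \sim \mu$, $Y \sim \mu*\nu$ with $Y \in \{X, X+t\}$ (hence $|X-Y| \le t$ a.s.) gives the direct estimate $\Delta(r_1) \le 2(t/r_1)\log(r_1/t)$. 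Together these imply
\[
\Delta(r_2) - \Delta(r_1) \ge \bigl[H(\mu*\nu;t|2t) - H(\mu;t|2t)\bigr] - \text{(small error)},
\]
where the hypotheses absorb the error comfortably: $r_1 \ge 144 t/\delta^2$ makes $\Delta(r_1) \le \delta/3$ with slack, and $r_2 \le t\delta/10$ controls the Lipschitz slack between $\Delta(r_2)$ and $\Delta(t)$.

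The remaining main step is the single-scale estimate $H(\mu*\nu;t|2t) - H(\mu;t|2t) \ge \tfrac12 \delta$ (slightly stronger than $\delta/3$ to absorb the errors above). Since $\mu*\nu = \tfrac12(\mu + T_t\mu)$ with $T_t$ translation by $t$, this is a concrete comparison of binary-type entropies between $t$- and $2t$-dyadic cells. Using the cell formula \eqref{eq:ent-on-Z2} (rescaled by $t$), I would express the deficit $\delta$ as an average of local binary deficits $\delta_a = 1 - h(q_a)$, where $h$ is binary entropy and $q_a$ is the fraction of mass of $\mu$ in the left $t$-half of the $a$-th $2t$-cell. Convolution with $\nu$ replaces each cell by an overlay of two $t$-translates, whose binary profile at scale $t$ versus $2t$ satisfies a pointwise inequality strictly improving $h(q_a)$; averaging over cells yields the claimed gain of at least $\delta/2$.

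The main obstacle is this single-scale inequality: one must handle (i) the averaging over cell alignments built into $H(\,\cdot\,;s)$, which couples neighboring $2t$-cells and spreads mass across their boundaries, and (ii) the elementary but delicate binary-entropy estimate quantifying how much balancing a skewed two-point distribution with its $t$-translate reduces the deficit. The explicit constants $1/10$ and $144$ in the hypothesis are tuned precisely so that the Lipschitz and perturbation errors in the first step leave enough of this single-scale gain to yield the stated $\delta/3$.
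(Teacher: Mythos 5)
Your plan splits at scale $t$, which is the right idea, but you miss the linchpin of the paper's argument: Lemma~\ref{lm:conv-by-Bernoulli}. Because $\nu=\tfrac12(\delta_0+\delta_t)$ exactly tiles under the uniform smoothing kernel of width $t$ --- that is, $\nu*\chi_t=\chi_{2t}$ --- one gets the \emph{exact} identity
\[
H(\mu*\nu;t)=H(\mu;t)+\delta,\qquad \delta:=1-H(\mu;t|2t),
\]
with no cell-by-cell binary-entropy analysis at all. In your notation this says $\Delta(t)=\delta$. The paper then closes the argument by showing $\Delta(r_2)\ge\Delta(t)-\delta/3$ (Lemma~\ref{lm:fractional-scale} with $t/r_2\ge 10/\delta$) and $\Delta(r_1)\le\delta/3$ (Lemma~\ref{lm:perturb} with $r_1/t\ge 144/\delta^2$), so $\Delta(r_2)-\Delta(r_1)\ge\delta/3$. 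You do use these two hypothesis-controlled estimates, but you point them at the wrong intermediate quantity.

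The quantity you propose to lower-bound, $H(\mu*\nu;t|2t)-H(\mu;t|2t)=\Delta(t)-\Delta(2t)$, is \emph{not} the one you need, and your claimed bound $\ge\tfrac12\delta$ is in fact false. Take $\mu=\tfrac12(\delta_0+\delta_t)$. Then a direct computation (via Lemma~\ref{lm:second-def}, convolving with $\chi_t$ and $\chi_{2t}$) gives $\delta=\tfrac12$, $\Delta(t)=\tfrac12$, and
\[
\Delta(2t)=H(\mu*\nu;2t)-H(\mu;2t)=\Bigl(2-\tfrac34\log 3\Bigr)-\tfrac12\approx 0.31,
\]
so $\Delta(t)-\Delta(2t)\approx 0.19 < \delta/2=0.25$. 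Moreover, even granting a $\delta/2$ bound, the accounting would not close: the two errors you invoke each cost roughly $\delta/3$ under the stated hypotheses (as the paper's choice of $144$ and $10$ makes precise), so the intermediate gain must be at least the full $\delta$, not $\delta/2$. Since $\Delta(2t)\ge 0$ always (convolution cannot decrease entropy at a single scale, \eqref{lowerbound}), $\Delta(t)-\Delta(2t)\le\Delta(t)=\delta$ with equality only when $\Delta(2t)=0$, which is generally false; so no constant in $(0,1)$ can make the plan work along these lines. The resolution is exactly to replace the target $\Delta(t)-\Delta(2t)$ by $\Delta(t)$ itself, and then Lemma~\ref{lm:conv-by-Bernoulli} supplies $\Delta(t)=\delta$ for free.
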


\begin{prp}\label{pr:bernoulli-decompose}
Let $\mu$ be a finitely supported probability measure on $\R$
and let $r>0$ be a number.
Suppose that 
\[
H\Big(\mu;\frac{r}{2}\Big|r\Big)\le 1.5\cdot H(\mu;r|2r).
\]

Then
\[
\mu=\nu+\eta_1+\ldots+\eta_N,
\]
where $N$ is an integer, $\nu$ is a non-negative measure,
\[
\|\eta_1\|+\ldots+\|\eta_N\|\ge \frac{1}{128}\cdot \frac{H(\mu;r|2r)}{\log(H(\mu;r|2r)^{-1})+1}
\]
and $\eta_i$ are Bernoulli measures supported on pairs of points at distances
between $2r$ and $r/2$.
\end{prp}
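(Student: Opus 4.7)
The plan is to discretize $\mu$ at scale $r$, convert the entropy $H(\mu;r|2r)$ into Bernoulli mass via a pair-decomposition and Jensen's inequality, then lift the resulting Bernoulli components back to $\mu$. Write $H = H(\mu;r|2r)$, and assume $H > 0$ (otherwise the conclusion is vacuous). Using the averaged definition of entropy at scale together with the integer pair formula from Section~\ref{sc:entropy}, I would pick a shift $t_0$ so that the discretization $\nu = \lfloor X/r + t_0\rfloor$ (with $X\sim\mu$) satisfies $H(\nu;1|2) \gtrsim H$; Lemma~\ref{lm:perturb} and the estimates of Section~\ref{sc:entropy-at-scale} show that little entropy is lost under this passage. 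The key step is to decompose this quantity according to the even and odd pairings of consecutive atoms:
\[
H(\nu;1|2) = \tfrac{1}{2}\bigl(H_{\text{even}} + H_{\text{odd}}\bigr),\qquad H_{\text{even}} = \sum_k q_k h(\beta_k),
\]
where $q_k = \nu(2k)+\nu(2k+1)$, $\beta_k = \min(\nu(2k),\nu(2k+1))/q_k \in [0,1/2]$, and $H_{\text{odd}}$ is analogous for pairs $(2k-1,2k)$. Without loss of generality $H_{\text{even}} \ge H$.

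The natural Bernoulli component to extract from the pair $(2k,2k+1)$ has total mass $2q_k\beta_k$, so, setting $B = \sum_k q_k \beta_k$, the total extracted mass will be $2B$. To relate $B$ to $H$, I would use the elementary inequality $h(\beta) \le \beta\log(e/\beta)$ on $[0,1/2]$ together with Jensen's inequality applied to the concave function $f(\beta) = \beta\log(e/\beta)$ with the probability weights $q_k$ (which satisfy $\sum_k q_k = 1$):
\[
H \le H_{\text{even}} \le \sum_k q_k f(\beta_k) \le f(B) = B\log(e/B).
\]
Solving this gives $B \ge c H/(\log(H^{-1})+1)$ for an absolute constant $c$, from which the lower bound $\sum\|\eta_i\| = 2B \ge H/(128(\log(H^{-1})+1))$ follows after tracking constants.

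The remaining step is to lift the Bernoulli components from $\nu$ back to $\mu$: each even pair $(2k,2k+1)$ in $\nu$ corresponds to atoms of $\mu$ lying in two adjacent $r$-cells $C_{2k}, C_{2k+1}$, and the naive distance range for an atom pair across these cells is $[0,2r]$, whereas the claim requires $[r/2,2r]$. One must pair atoms across $C_{2k}$ and $C_{2k+1}$ while excluding configurations in which both atoms sit near the common boundary. This is where the hypothesis $H(\mu;r/2|r) \le 1.5\,H(\mu;r|2r)$ becomes essential: a configuration with significant mass concentrated on opposite sides of many cell boundaries (which is what forces bad pairings) would also produce large pair-entropy at scale $r/2$ within $r$-cells, violating the hypothesis. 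A greedy pairing (e.g., matching the leftmost atoms of $C_{2k}$ with the rightmost of $C_{2k+1}$ and proceeding inward), combined with re-applying the pair formula at scale $r/2$ to count the bad mass, should show that the mass discarded to distance-$<r/2$ pairings is controlled by $H(\mu;r/2|r)$ and so by a constant multiple of $H$.

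The main obstacle is this lifting step: making the heuristic ``boundary concentration creates fine-scale entropy'' fully quantitative. The cleanest route is to express the ``bad-pairing mass'' explicitly as a sum of $r/2$-scale pair-entropies and then invoke the hypothesis $H(\mu;r/2|r) \le 1.5\,H$ directly; one may also need to use pairings at several distances within $[r/2,2r]$ in parallel, or average over shifts $t_0$, to guarantee enough good Bernoulli mass survives. The specific constant $1/128$ should then emerge from bookkeeping through these ingredients.
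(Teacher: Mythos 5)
Your approach is fundamentally different from the paper's, and it has a genuine gap precisely at the step you flag as the ``main obstacle.'' The first part of your argument -- discretizing onto $\Z$, writing $H(\nu;1|2)$ as an average of even and odd pair-entropies via \eqref{eq:ent-on-Z1}, converting pair entropy to Bernoulli mass with the bound $h(\beta)\le\beta\log(e/\beta)$ followed by Jensen -- is perfectly sound and would indeed give a bound of the claimed shape \emph{for measures supported on $\Z$}. The problem is the lift back to $\mu$, and I think it is worse than you suspect. You claim that ``a configuration with significant mass concentrated on opposite sides of many cell boundaries would also produce large pair-entropy at scale $r/2$ within $r$-cells,'' but this is not quite how $H(\mu;r/2|r)$ works: by \eqref{equation:interpret2}, $H(\mu;r/2|r)$ is, for each shift, the conditional entropy of the $r/2$-subcell given the $r$-cell. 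Two atoms at distance $<r/2$ lying on \emph{opposite} sides of an $r$-cell boundary contribute to $H(\mu;r|2r)$ but need not contribute to $H(\mu;r/2|r)$ at all for that shift, since they are in different $r$-cells to begin with. Averaging over shifts does create some coupling between the two quantities, but quantifying it in a way that recovers the clean ratio hypothesis $H(\mu;r/2|r)\le 1.5\,H(\mu;r|2r)$ would require a real argument, and you have not given one. As stated, the proposal does not constitute a proof.

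For comparison, the paper sidesteps the discretize-then-lift difficulty entirely with a variational argument. It considers the decomposition $\mu=\nu+\eta_1+\ldots+\eta_N$ into a remainder $\nu$ and good Bernoulli pieces that \emph{minimizes} $\|\nu\|$, observes that minimality forces the support of $\nu$ to contain no pair of points at distance in $[r/2,2r]$, covers $\supp\nu$ by short well-separated intervals, and then applies Lemma~\ref{lm:bernoulli-decompose} (the exact doubling identity $H(\nu;r/2|r)=2H(\nu;r|2r)$ for such $\nu$). Combining this with Lemma~\ref{lm:entropy-component} to compare $\mu$ and $\nu$ at both scale gaps, the hypothesis $H(\mu;r/2|r)\le 1.5\,H(\mu;r|2r)$ immediately yields $\d\log\d^{-1}\ge H(\mu;r|2r)/12$ for $\d=1-\|\nu\|$, whence the claim. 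Everything happens on the original measure $\mu$ on $\R$; there is no discretization and no boundary bookkeeping. If you want to pursue your route, you would need to replace the heuristic in your last paragraph with a genuine estimate, and I am skeptical the two scale-quantities are related in the way you would need.
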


A decomposition similar to the one in Proposition \ref{pr:bernoulli-decompose}
appears in \cite{LV-entropy-sum-product}, however, they have different quantitative aspects,
hence they require different proofs.
The main difference between the setup in Theorem \ref{th:low-entropy-convolution}
and \cite{LV-entropy-sum-product} is that assumption \eqref{eq:few-uniform-scales}
is absent in the latter, and this makes a drastic difference in the quantitative features
of the conclusion, and also very different arguments are required.

The proofs of Propositions \ref{pr:conv-by-Bernoulli} and \ref{pr:bernoulli-decompose}
will be given in Sections \ref{sc:conv-by-Bernoulli} and \ref{sc:bernoulli-decompose}, respectively.

In the proof of Theorem \ref{th:low-entropy-convolution}, we will use Proposition \ref{pr:bernoulli-decompose}
for the measure $\nu$ to write it as a combination of Bernoulli measures.
We will see that the distance between the points, where the Bernoulli measures are supported can
be choosen to fall in many different scale ranges.

We will then apply Proposition \ref{pr:conv-by-Bernoulli} to show that we gain a small amount
of entropy on each such scale range.
The details of this argument are given in Section \ref{sc:low-entropy-proof}.

\subsection{}
\label{sc:conv-by-Bernoulli}

The purpose of this section is the proof of Proposition \ref{pr:conv-by-Bernoulli}.
We begin with a simple observation about how entropy increases
if we convolve a measure by a Bernoulli measure supported at points of distance matching the scale.

\begin{lem}\label{lm:conv-by-Bernoulli}
Let $\mu$  be a compactly supported probability measure on $\R$ and
let $t>0$ be number.
Let $\nu$ be a Bernoulli probability measure supported on two points at distance $t$.
Then
\[
H(\mu*\nu;t)=H(\mu;t)+(1-H(\mu;t|2t)).
\]
\end{lem}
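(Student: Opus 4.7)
The plan is to express both sides of the identity in terms of differential entropies via Lemma \ref{lm:second-def}, at which point the claim will reduce to a single distributional identity.

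First, since $H(\cdot;r)$ is translation invariant, I would assume without loss of generality that $\nu = \tfrac{1}{2}(\delta_0 + \delta_t)$. Let $X \sim \mu$ and $Y \sim \nu$ be independent, and write $I_s$ for a uniform random variable on $[0,s]$, independent of $X$ and $Y$. Applying Lemma \ref{lm:second-def} at scales $t$ and $2t$, I would rewrite
\[
H(\mu*\nu;t) = H(X+Y+I_t) - \log t, \quad H(\mu;t) = H(X+I_t) - \log t, \quad H(\mu;2t) = H(X+I_{2t}) - \log(2t).
\]
Using $1 = \log(2t) - \log t$, the desired identity $H(\mu*\nu;t) = H(\mu;t) + 1 - H(\mu;t|2t) = 1 + H(\mu;2t)$ is algebraically equivalent to
\[
H(X + Y + I_t) = H(X + I_{2t}).
\]

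The key observation, which I expect does all the work, is the distributional identity $Y + I_t \stackrel{d}{=} I_{2t}$. Indeed, $Y + I_t$ is an equal-weight mixture of the uniform distributions on $[0,t]$ and $[t,2t]$, which has density $\tfrac{1}{2t}$ on $[0,2t]$, i.e.\ the density of $I_{2t}$. Since $X$ is independent of both $(Y,I_t)$ and $I_{2t}$, the random variables $X + Y + I_t$ and $X + I_{2t}$ are equal in distribution, so their differential entropies coincide.

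I do not anticipate any serious obstacle: the proof is essentially a one-line verification once the identity $Y + I_t \stackrel{d}{=} I_{2t}$ is spotted. The only points requiring a moment of care are the bookkeeping of the $\log t$ terms between the differential and scale-entropy formulations, and the initial translation of $\nu$ to $\{0,t\}$, both of which are routine.
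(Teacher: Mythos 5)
Your proof is correct and is essentially identical to the paper's: both reduce the claim via Lemma \ref{lm:second-def} to the observation that convolving the Bernoulli measure $\nu=\tfrac12(\delta_0+\delta_t)$ with the uniform distribution on $[0,t]$ gives the uniform distribution on $[0,2t]$ (the paper writes this as $\nu*\chi_t=\chi_{2t}$; you write it as $Y+I_t\stackrel{d}{=}I_{2t}$). The only cosmetic difference is that the paper phrases the computation as a short chain of equalities in terms of $\chi_s$ rather than isolating the intermediate identity $H(X+Y+I_t)=H(X+I_{2t})$.
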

\begin{proof}
We assume as we may that $\nu=(\d_0+\d_t)/2$.
We use the alternative definition for the entropies given in Lemma \ref{lm:second-def}.
Denoting by $\chi_s$ the normalized Lebesgue measure on $[0,s]$, we can write
\begin{align*}
H(\mu*\nu;t)=&H(\mu*\nu*\chi_t)-H(\chi_t)\\
=&H(\mu*\chi_{2t})-H(\chi_{2t})+(H(\chi_{2t})-H(\chi_t))\\
=&H(\mu;2t)+1\\
=&H(\mu;t)-H(\mu;t|2t)+1.
\end{align*}
\end{proof}

In order to prove Proposition \ref{pr:conv-by-Bernoulli},
we need to show that the entropy increase obtained in Lemma \ref{lm:conv-by-Bernoulli}
is captured by a suitably large
scale range around the distance between the points in the support of $\nu$.
We proceed with the details of this.

\begin{proof}[Proof of Proposition \ref{pr:conv-by-Bernoulli}]
Applying Lemma \ref{lm:conv-by-Bernoulli}, we obtain
\[
H(\mu*\nu;t)=H(\mu;t)+(1-H(\mu;t|2t)).
\]

By Lemma \ref{lm:perturb} we have
\[
|H(\mu*\nu;r_1)-H(\mu;r_1)|\le (2 t/r_1)\log(r_1/t)\le 4 (t/r_1)^{1/2}\le (1-H(\mu;t|2t))/3,
\]
where we used the inequality $\log(x)\le 2x^{1/2}$,
and then the assumption
\[
r_1/t\ge144(1-H(\mu;t|2t))^{-2}.
\]
Hence
\begin{equation}\label{eq:cbB1}
H(\mu*\nu;t|r_1)\ge H(\mu;t|r_1)+\frac23(1-H(\mu;t|2t)).
\end{equation}

Using  Lemma \ref{lm:fractional-scale}, we write
\[
H(\mu*\nu;r_2|t)\ge H(\mu;r_2|t) - \frac{2}{(\ln 2)(t/r_2-1)}.
\]
By assumption,
\[
t/r_2\ge 10(1-H(\mu;t|2t))^{-1}\ge \frac{6}{\ln 2} (1-H(\mu;t|2t))^{-1}+1,
\]
where we also used $6/\ln(2)<9$.
This yields
\[
\frac{2}{(\ln 2)(t/r_2-1)}\le \frac13(1-H(\mu;t|2t)).
\]
Thus
\[
H(\mu*\nu;r_2|t)\ge H(\mu;r_2|t)- \frac13(1-H(\mu;t|2t)).
\]
This combined with \eqref{eq:cbB1} proves the claim.
\end{proof}

\subsection{}
\label{sc:bernoulli-decompose}

The purpose of this section is the proof of Proposition \ref{pr:bernoulli-decompose}.
Our first aim is the next lemma.

\begin{lem}\label{lm:bernoulli-decompose}
Let $\mu$ be a probability measure on $\R$ and let $0<r_0,r_1$ be numbers with $4r_0\le r_1$.
Let $I_1,\ldots I_n\subset\R$ be
disjoint intervals of length at most $r_0$ such that every two of them have a gap of at least $r_1$ between them.
Suppose $\supp\mu\subset I_1\cup\ldots\cup I_n$.
Then for all $2r_0\le r\le r_1/2$, we have
\[
H\Big(\mu;\frac{r}{2}\Big|r\Big)=2H(\mu;r|2r).
\]
\end{lem}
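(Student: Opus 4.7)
My plan is to derive the explicit formula $H(\mu; s) = H(J) + G/s$ for each of the three scales $s \in \{r/2, r, 2r\}$, where $J$ denotes the discrete index of the interval $I_j$ containing $X \sim \mu$ and $G$ is a constant depending on $\mu$ but not on $s$. Once this is established, the identity follows by telescoping:
$$H(\mu; r/2) - H(\mu; r) = G/r = 2 \cdot G/(2r) = 2\bigl(H(\mu; r) - H(\mu; 2r)\bigr).$$

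To derive the formula, I would use Lemma \ref{lm:second-def} to write $H(\mu; s) = H(\mu * \chi_s) - \log s$, where $\chi_s$ is the uniform density on $[0, s]$. Decompose $\mu = \sum_j p_j \mu_j$ with $p_j = \mu(I_j)$ and $\mu_j$ the normalized restriction of $\mu$ to $I_j$. Under the hypotheses $2r_0 \le r \le r_1/2$, for each $s \in \{r/2, r, 2r\}$ we have $s \ge r/2 \ge r_0 \ge |I_j|$ and $s \le 2r \le r_1$. The first inequality ensures $\chi_s$ is wide enough to fully engulf each $I_j$, and the second (combined with the separation hypothesis) makes the spread-out supports $I_j + [0,s]$ pairwise disjoint. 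Disjoint supports give the equality case of superadditivity of differential entropy, namely $H(\mu * \chi_s) = H(J) + \sum_j p_j H(\mu_j * \chi_s)$, and hence $H(\mu; s) = H(J) + \sum_j p_j H(\mu_j; s)$.

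It then remains to compute $H(\mu_j; s)$ and show that it scales as $1/s$. Writing $I_j = [a_j, a_j + l_j]$ and letting $G_j$ denote the CDF of $\mu_j$, the density of $\mu_j * \chi_s$ equals $F(x)/s$ with $F(x) := \mu_j([x-s, x])$. Because $l_j \le s$, the function $F$ rises from $0$ to $1$ along $G_j$ on $[a_j, a_j + l_j]$, stays constantly equal to $1$ on the flat middle $[a_j + l_j, a_j + s]$, and mirrors symmetrically back down to $0$ on the right transition. From $\int F\,dx = s$ one obtains
$$H(\mu_j * \chi_s) = \log s - \frac{1}{s}\int F \log F\,dx,$$
and the flat middle contributes zero to $\int F \log F$, so
$$H(\mu_j; s) = \frac{1}{s}\int_{a_j}^{a_j + l_j} h(G_j(x))\,dx =: G_j^*/s,$$
where $h(p) = -p\log p - (1-p)\log(1-p)$ and $G_j^*$ is visibly independent of $s$. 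Setting $G := \sum_j p_j G_j^*$ yields the desired formula.

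The main obstacle is really just careful bookkeeping: I must verify that both the flat-middle condition $s \ge l_j$ and the disjointness condition $s \le r_1$ hold simultaneously at all three scales $r/2, r, 2r$, which is exactly what the hypotheses $2r_0 \le r \le r_1/2$ (together with $4r_0 \le r_1$ to guarantee the range is nonempty) provide. There is no quantitative slack to worry about — the cancellation of the $\log s$ term is exact, which is why the identity $H(\mu; r/2|r) = 2 H(\mu; r|2r)$ holds as an equality rather than an asymptotic.
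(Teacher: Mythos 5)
Your proof is correct and follows essentially the same route as the paper: decompose $\mu$ along the separated intervals $I_j$, observe that the entropy at scale of each narrow piece $\mu_j$ scales exactly as $G_j^*/s$ once $s$ exceeds the interval length (which is the content of the paper's Lemma \ref{lm:small-interval}, derived there via the sign of the floored variable rather than via the CDF, but yielding the identical formula $\tfrac{1}{s}\int h(G_j)$), and then telescope using the fact that the separation term $H(J)$ is scale-independent. The only cosmetic difference is that you carry the $H(J)$ summand explicitly in $H(\mu;s)=H(J)+G/s$, whereas the paper packages the decomposition step into Lemma \ref{lm:decompose-intervals}; in either case this term cancels in the conditional entropies, which is why the identity is exact.
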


This lemma shows that the inequality
\[
H\Big(\mu;\frac{r}{2}\Big|r\Big)<2H(\mu;r|2r)
\]
implies that the support of $\mu$ contains at least one pair of points of distance
comparable to $r$.
We prove this lemma in Section \ref{sc:B-decompose1}.
In Section \ref{sc:B-decompose2},
we estimate the effect of restricting a measure to a subset of its support
on its entropy.
In Section \ref{sc:B-decompose3}, we use these estimates to understand
the effect of removing all pairs of points at distance comparable to $r$
from the support of $\mu$.
We combine this with Lemma \ref{lm:bernoulli-decompose} to conclude the proof
of Proposition \ref{pr:bernoulli-decompose}.

\subsubsection{}\label{sc:B-decompose1}
We introduce some notation.
Let $\mu$ be a probability measure on $\R$.
We write
\[
H_\pm(\mu):=-\mu(-\infty,0)\log\mu(-\infty,0)-\mu[0,\infty)\log\mu[0,\infty).
\]

We begin with the special case of Lemma \ref{lm:bernoulli-decompose}
in which the measure is supported on a small interval.

\begin{lem}\label{lm:small-interval}
Let $\mu$ be probability measure whose support is contained in an interval of length $r_0$.
Then for all $r\ge r_0$ we have
\begin{equation}\label{eq:small-interval}
H(\mu;r|2r)=\frac{1}{2r}\int_{-\infty}^{\infty} H_\pm(\mu*\d_x) dx.
\end{equation}
\end{lem}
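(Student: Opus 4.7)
The plan is to prove the stronger intermediate identity
\[
H(\mu;s) = \frac{1}{s}\int_{-\infty}^{\infty} H_\pm(\mu*\delta_x)\,dx
\qquad\text{for every }s\ge r_0,
\]
and then obtain the lemma by taking the difference at scales $s=r$ and $s=2r$, since the right-hand side of the target identity is exactly $\frac{1}{2r}\int H_\pm$ and
\[
\frac{1}{r}\int H_\pm - \frac{1}{2r}\int H_\pm = \frac{1}{2r}\int H_\pm.
\]

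To prove the intermediate identity, I would fix $s\ge r_0$ and use the definition $H(\mu;s)=\int_0^1 H(\lfloor X/s+t\rfloor)\,dt$ where $X$ has law $\mu$. The point is that because $\mu$ is supported in some interval $[a,a+r_0]$ of length at most $s$, the random partition of $\R$ into cells $[(k-t)s,(k-t+1)s)$ has at most one boundary point inside $\supp\mu$. Consequently $\lfloor X/s+t\rfloor$ is either constant (entropy zero) or supported on two consecutive values; in the latter case, if $u=u_k(t):=(k-t)s$ denotes the unique boundary in $[a,a+r_0]$, then
\[
H(\lfloor X/s+t\rfloor) = -\mu(-\infty,u)\log\mu(-\infty,u) - \mu[u,\infty)\log\mu[u,\infty) = H_\pm(\mu*\delta_{-u}),
\]
using that by the support hypothesis $\mu(-\infty,u)=\mu[a,u)$ and $\mu[u,\infty)=\mu[u,a+r_0]$.

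Next I would convert the integration over $t\in[0,1]$ into an integration over the boundary position $u$. For each integer $k$ the map $t\mapsto u_k(t)=(k-t)s$ is an affine bijection from $[0,1)$ onto $((k-1)s,ks]$, with Jacobian $|du/dt|=s$. Since $H_\pm(\mu*\delta_{-u_k(t)})$ vanishes unless $u_k(t)\in[a,a+r_0]$, I may extend the sum to all $k\in\Z$ without changing the value:
\[
H(\mu;s) = \int_0^1 H_\pm(\mu*\delta_{-u_{k^*(t)}(t)})\,dt = \sum_{k\in\Z}\int_0^1 H_\pm(\mu*\delta_{-u_k(t)})\,dt.
\]
Applying the change of variables $u=u_k(t)$ on each summand and then gluing the intervals $((k-1)s,ks]$ into $\R$ yields $\frac{1}{s}\int_\R H_\pm(\mu*\delta_{-u})\,du$, and finally the substitution $x=-u$ gives the intermediate identity. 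No step here is genuinely hard; the only thing to be careful about is the bookkeeping of the change of variables and the observation that the two-point structure of $\lfloor X/s+t\rfloor$ is what makes a formula in terms of $H_\pm$ possible, so the support assumption $r_0\le s$ is used in an essential way (and therefore valid for both $s=r$ and $s=2r$).
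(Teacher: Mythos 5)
Your proposal is correct and takes essentially the same route as the paper: both prove the intermediate identity $H(\mu;s)=\tfrac{1}{s}\int_{-\infty}^{\infty} H_\pm(\mu*\delta_x)\,dx$ for $s\ge r_0$ using the fact that at most one partition boundary falls in $\supp\mu$, and then take the difference at $s=r$ and $s=2r$. The only cosmetic difference is that the paper normalizes the support to lie in $[-r_0-\e,-\e]$ so that the relevant boundary is always at $0$, whereas you keep $\mu$ in place and track the boundary $u_k(t)$ with an explicit change of variables; the underlying computation is identical.
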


Observe that the lemma indeed implies
\[
H\Big(\mu;\frac r2\Big|r\Big)=2H(\mu;r|2r)
\]
if $r\ge 2r_0$.

\begin{proof}
Since both sides of \eqref{eq:small-interval} are continuous in $r$, we may assume $r>r_0$.
Let $X$ be a random variable with law $\mu$.
We have
\[
H(\mu;r)=\frac{1}{r}\int_{0}^{r}H(\lfloor r^{-1} (X+t)\rfloor)dt.
\]

We assume without loss of generality that $\mu$ is concentrated on $[-r_0-\e,0-\e]$ for some $\e<r-r_0$.
Observe that the value of $\lfloor r^{-1} (X+t)\rfloor$ for $t\in[0,r]$ depends only the sign of $ r^{-1} (X+t)$.
Thus
\[
H(\mu;r)=\frac{1}{r}\int_{0}^{r}H_\pm( r^{-1} (X+t))dt
=\frac{1}{r}\int_{0}^{r}H_\pm(\mu*\d_t)dt
=\frac1r\int_{-\infty}^\infty H_\pm(\mu*\d_t)dt.
\]
The last equality follows from the fact that the integrand is $0$ for $t\notin [0,r]$.
We take the difference of this with itself with $2r$ substituted in place of $r$ and obtain the claim.
\end{proof}

We continue with a lemma which allows to reduce the general
case of Lemma \ref{lm:bernoulli-decompose}
to the special case considered in the previous lemma.

\begin{lem}\label{lm:decompose-intervals}
Let $\mu$ be a probability measure on $\R$ and let $r_1>0$ be a number.
Let $I_1,\ldots I_n\subset\R$ be
disjoint intervals such that every two of them have a gap of at least $r_1$ between them.
Suppose $\supp\mu\subset I_1\cup\ldots\cup I_n$.
Then for all $r\le r_1$, we have
\begin{equation}\label{eq:decompose-intervals}
H(\mu;r)=\sum_{j=1}^n H((\mu|_{I_j});r).
\end{equation}
\end{lem}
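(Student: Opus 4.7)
The plan is to exploit the hypothesis $r\le r_1$ to show that for every $t\in[0,1]$ the quantization map $x\mapsto\lfloor x/r+t\rfloor$ never sends points from distinct intervals $I_j,I_k$ to the same integer. Once this separation is in hand, the law of $\lfloor X/r+t\rfloor$ (for $X$ with law $\mu$) decomposes as a sum of non-negative discrete measures with pairwise disjoint supports, one coming from each $I_j$, and the Shannon entropy splits additively across that decomposition; integration over $t$ then gives the identity.

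First I would verify the disjointness. If $x\in I_j$ and $y\in I_k$ with $j\ne k$, then $|x-y|\ge r_1\ge r$, so $|x/r-y/r|\ge 1$, and hence $\lfloor x/r+t\rfloor\ne\lfloor y/r+t\rfloor$ for every $t\in[0,1]$. Letting $X$ have law $\mu$ and writing $\rho_j^t$ for the push-forward of $\mu|_{I_j}$ under $x\mapsto\lfloor x/r+t\rfloor$, this shows the measures $\rho_1^t,\dots,\rho_n^t$ are supported on pairwise disjoint subsets of $\Z$, and their sum $\rho^t$ is the law of $\lfloor X/r+t\rfloor$. Under the non-probability convention of Section~\ref{sc:non-prob}, additivity of Shannon entropy on disjoint atomic supports gives
\[
H\bigl(\lfloor X/r+t\rfloor\bigr)=H(\rho^t)=\sum_{j=1}^n H(\rho_j^t).
\]

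Finally I would integrate both sides over $t\in[0,1]$, using the definition $H(\nu;r)=\int_0^1 H(\lfloor Y/r+t\rfloor)\,dt$ and recognizing $\int_0^1 H(\rho_j^t)\,dt$ as $H(\mu|_{I_j};r)$ under the same convention; this yields $H(\mu;r)=\sum_j H(\mu|_{I_j};r)$. The one substantive step is the elementary disjointness argument above, which is where the hypothesis $r\le r_1$ enters; the rest is bookkeeping with the convention for entropies of non-probability measures, and I do not anticipate any real obstacle.
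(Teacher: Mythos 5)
Your argument is sound and reaches the lemma by a mildly different route from the paper. The paper invokes Lemma~\ref{lm:second-def} to write $H(\mu;r)=H(\mu*\chi_r)-H(\chi_r)$, decomposes $\mu*\chi_r=\sum_j\mu|_{I_j}*\chi_r$ into disjointly supported functions, and appeals to additivity of the differential entropy over such a decomposition (restricting first to $r<r_1$ and recovering $r=r_1$ by continuity). You instead work directly with $H(X;r)=\int_0^1 H(\lfloor X/r+t\rfloor)\,dt$ and observe that for $r\le r_1$ the map $x\mapsto\lfloor x/r+t\rfloor$ carries the different $I_j$ into pairwise disjoint integer sets for \emph{every} $t$; that observation is correct (since $\lfloor a\rfloor\ne\lfloor a+s\rfloor$ already when $s=1$) and plays the same role the smoothing step plays for the paper, without needing the continuity reduction.

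One caveat applies equally to your proof, to the paper's proof, and to the equality \eqref{eq:decompose-intervals} as stated. Under the convention \eqref{eq:non-probability}, entropy is not exactly additive over a disjoint decomposition: if $\rho=\rho_1+\cdots+\rho_n$ with the $\rho_j$ mutually singular and $p_j=\|\rho_j\|$, then the chain rule gives
\[
H(\rho)=\sum_{j=1}^n H(\rho_j)-\sum_{j=1}^n p_j\log p_j,
\]
the second term being the Shannon entropy of the ``which piece'' partition. (Sanity check: $\mu=\tfrac12\delta_0+\tfrac12\delta_{10}$, $r=1$ gives $H(\mu;1)=1$ while $\sum_j H(\mu|_{I_j};1)=0$.) So your line $H(\rho^t)=\sum_j H(\rho_j^t)$, just like the paper's $H(\mu*\chi_r)=\sum_j H(\mu|_{I_j}*\chi_r)$, is off by $-\sum_j\mu(I_j)\log\mu(I_j)$, and so is the lemma's displayed identity. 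The correction is independent of $r$, hence cancels in every actual use of the lemma (it is only invoked through the differences $H(\mu;r_2\mid r_1)$ in the proof of Lemma~\ref{lm:bernoulli-decompose}), so nothing downstream is affected; but strict equality as you (and the paper) wrote it holds only when all $\mu(I_j)\in\{0,1\}$.
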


\begin{proof}
Since both sides of \eqref{eq:decompose-intervals} are continuous in $r$, we may assume
$r<r_1$.
We note that
\[
\mu*\chi_r=(\mu|_{I_1})*\chi_r+\ldots+(\mu|_{I_n})*\chi_r,
\]
and the measures on the right hand side have disjoint support.
Using Lemma \ref{lm:second-def}, we can write
\begin{align*}
H(\mu;r)=&H(\mu*\chi_r)-H(\chi_r)
=\sum_{j=1}^n (H((\mu|_{I_j})*\chi_r)-\mu(I_j)H(\chi_r))\\
=&\sum_{j=1}^n H((\mu|_{I_j});r).
\end{align*}
\end{proof}

Finally, we can prove the general version of our claim.

\begin{proof}[Proof of Lemma \ref{lm:bernoulli-decompose}]
We can decompose $\mu$ as the sum of its restrictions to each $I_j$.
By Lemma \ref{lm:decompose-intervals} applied with $r/2$, $r$ and $2r$
in place of $r$, it is enough to show the claim for each of these restrictions.
Hence we can assume without loss of generality that $\supp \mu$ is contained in
an interval of length $r_0$.
The claim now follows immediately from the formula in Lemma \ref{lm:small-interval}.
\end{proof}

\subsubsection{}\label{sc:B-decompose2}
We need a further technical lemma that allows us to compare the entropy of a measure with the
entropy of a term appearing in a decomposition.

\begin{lem}\label{lm:entropy-component}
Let $\mu$ be a probability measure.
Suppose that $\mu=\nu+\eta$ for two non-negative measures $\nu$ and $\eta$.
Suppose further that $\|\eta\|\le1/2$.
Then
\[
H(\nu;r|2r)\le H(\mu;r|2r)\le H(\nu;r|2r)+3\|\eta\|\log\|\eta\|^{-1}
\]
for any $r>0$.
\end{lem}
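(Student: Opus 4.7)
The plan is to derive both inequalities from elementary sub- and super-additivity of Shannon entropy applied to the decomposition $\mu = \nu + \eta$, together with the convention \eqref{eq:non-probability} for the entropy of non-probability measures.

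For the lower bound, I would apply Lemma \ref{lm:conditional-super-additive} with $N=2$ and the two summands $\nu$, $\eta$ to obtain
\[
H(\mu; r|2r) \ge H(\nu; r|2r) + H(\eta; r|2r),
\]
and then drop the last term, using that $H(\eta; r|2r) \ge 0$ (directly from formula \eqref{equation:interpret2}, or from Lemma \ref{lm:entropy-conv-nondecrease} applied to $\eta$ and the zero measure).

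For the upper bound I would pair a subadditivity estimate at scale $r$ with the superadditivity estimate at scale $2r$. Writing $\alpha = \|\eta\|$ and applying $F(a+b) \le F(a)+F(b)$ pointwise to the densities of $\mu * \chi_r = \nu*\chi_r + \eta*\chi_r$, followed by the computation $F(\alpha x) = \alpha F(x) + F(\alpha) x$ and integration (exactly as in the pair of inequalities summarized after \eqref{eq:superadditive}), one gets at each scale $s$
\[
H(\mu;s) \le H(\nu;s) + H(\eta;s) + h(\alpha),
\]
where $h(\alpha) = -\alpha\log\alpha - (1-\alpha)\log(1-\alpha)$. Subtracting the superadditive bound $H(\mu;2r) \ge H(\nu;2r) + H(\eta;2r)$ yields
\[
H(\mu;r|2r) \le H(\nu;r|2r) + H(\eta;r|2r) + h(\alpha).
\]

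To finish I would bound each of the last two error terms by $\|\eta\|\log\|\eta\|^{-1}$ times an absolute constant. The conditional entropy $H(\eta;r|2r) = \|\eta\| \, H(\|\eta\|^{-1}\eta; r|2r)$ is at most $\|\eta\|$ by the remark following \eqref{equation:interpret2} that conditional entropy between integer-ratio scales is bounded by $\log 2 = 1$ for probability measures. For the binary-entropy term, on $[0,1/2]$ the function $x \mapsto x\log(1/x)$ is increasing so $(1-\alpha)\log(1-\alpha)^{-1} \le \alpha\log\alpha^{-1}$, giving $h(\alpha) \le 2\alpha\log\alpha^{-1}$. Combined with $\alpha \le \alpha\log\alpha^{-1}$ (valid since $\alpha \le 1/2$, so $\log\alpha^{-1} \ge 1$), the sum of the two error terms is at most $3\|\eta\|\log\|\eta\|^{-1}$.

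There is no real obstacle here; the only thing to check carefully is that the pointwise subadditivity of $F$ really upgrades to the inequality $H(\mu;s) \le H(\nu;s) + H(\eta;s) + h(\|\eta\|)$ with the non-probability normalization \eqref{eq:non-probability}, but this is immediate from the computation in the proof of \eqref{subadd} applied to $\mu*\chi_s$ (or, equivalently, to each integrand $H(\lfloor X/s + t\rfloor)$ pointwise in $t$, followed by integration).
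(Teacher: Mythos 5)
Your proof is correct and rests on the same underlying fact as the paper, namely the mixture bound
\[
H(\mu;s)\le H(\nu;s)+H(\eta;s)+h(\|\eta\|),\qquad h(x)=-x\log x-(1-x)\log(1-x),
\]
together with superadditivity — only the route is slightly different. The paper introduces the component-indicator Bernoulli variable $Z$ and works directly with the conditional-entropy formula \eqref{equation:interpret2}, producing both bounds in one chain of inequalities; you derive the mixture bound from pointwise $F$-subadditivity applied to the smoothed densities, which requires (and you correctly supply) the extra bookkeeping coming from the non-probability convention \eqref{eq:non-probability}, and you then subtract the superadditivity at scale $2r$. One small slip in justification: you claim that $x\mapsto x\log x^{-1}$ is increasing on $[0,1/2]$ to conclude $(1-\alpha)\log(1-\alpha)^{-1}\le\alpha\log\alpha^{-1}$. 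That monotonicity claim is false (the function peaks at $1/e\approx 0.368$), and it would anyway not directly compare $F(\alpha)$ with $F(1-\alpha)$ since $1-\alpha\ge 1/2$. The inequality $F(1-\alpha)\le F(\alpha)$ for $\alpha\le 1/2$ is nonetheless true: writing $g(\alpha)=F(\alpha)-F(1-\alpha)$, one has $g(0)=g(1/2)=0$ and $g'(\alpha)=-\log\bigl(\alpha(1-\alpha)\bigr)-2/\ln 2$ changes sign exactly once on $(0,1/2)$, so $g$ increases then decreases and hence is nonnegative there. With that repaired, your estimate $h(\alpha)+\alpha\le 3\alpha\log\alpha^{-1}$ goes through, matching the paper's final bound.
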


We remark that this lemma is closely related to Fano's inequality (see \cite{cover-thomas}*{Proposition 2.10.1})
and the inequality on the left hand side also follows from
Lemma \ref{lm:conditional-super-additive}.

\begin{proof}
Owing to \eqref{eq:scale-scaling}, we can assume without loss of generality that $r=1$.
Let $X$ be a random variable with law $\mu$ and let $Z$ be $\{0,1\}$ valued random variable
such that $\P(Z=0)=\|\nu\|$ and the distribution of $X$ conditioned on the event $Z=0$
is $\|\nu\|^{-1}\nu$, and the distribution of $X$ conditioned on $Z=1$ is $\|\eta\|^{-1}\eta$.

By \eqref{equation:interpret2}, we have
\begin{align*}
H(\mu;1|2)=&\int_0^1 H\big(\lfloor X+2t\rfloor\big|\lfloor X/2+t\rfloor\big) dt,\\
H(\nu;1|2)=&\P(Z=0)\int_0^1 H\big(\lfloor X+2t\rfloor\big|\lfloor X/2+t\rfloor,Z=0\big) dt,\\
H(\eta;1|2)=&\P(Z=1)\int_0^1 H\big(\lfloor X+2t\rfloor\big|\lfloor X/2+t\rfloor,Z=1\big) dt.
\end{align*}
Here we use the following convention for conditioning on events.
If $Y_1, Y_2$ are random variables and $E$ is an event, then to calculate $H(Y_1|Y_2,E)$,
we restrict the probability space to the event $E$ (and normalize the measure) and calculate
the conditional entropy of the restriction of the random variables to this new probability space.

This means that for each $t$, we have
\begin{align*}
H\big(\lfloor X+2t\rfloor\big|\lfloor X/2+t\rfloor,Z\big)=&
\P(Z=0) H\big(\lfloor X+2t\rfloor\big|\lfloor X/2+t\rfloor,Z=0\big)\\
&+ \P(Z=1) H\big(\lfloor X+2t\rfloor\big|\lfloor X/2+t\rfloor,Z=1\big)
\end{align*}
We combine this with the estimates
\begin{align*}
H\big(\lfloor X+2t\rfloor\big|\lfloor X/2+t\rfloor\big)\ge&
H\big(\lfloor X+2t\rfloor\big|\lfloor X/2+t\rfloor,Z\big)\\
\ge& H\big(\lfloor X+2t\rfloor\big|\lfloor X/2+t\rfloor\big)-H(Z).
\end{align*}
and integrate it for $t$.
We find
\[
H(\mu;1|2)\ge H(\nu;1|2)+H(\eta;1|2)\ge H(\mu;1|2)- H(Z).
\]

We note that $H(\eta;1|2)\le \|\eta\|$ and
\[
H(Z)=-\|\eta\|\log\|\eta\|-(1-\|\eta\|)\log(1-\|\eta\|)
\le 2\|\eta\|\log\|\eta\|^{-1},
\]
which proves the claim.
\end{proof}

\subsubsection{Proof of Proposition \ref{pr:bernoulli-decompose}}
\label{sc:B-decompose3}
Let
\[
\mu=\nu+\eta_1+\ldots+\eta_N
\]
be a decomposition such that $\nu$ is a non-negative measure, each $\eta_i$
is a Bernoulli measure supported on a pair of points at distance between $r/2$ and $2r$
and $\|\nu\|$ is minimal among all such decompositions.
Recall that $\mu$ is assumed to be finitely supported, hence the minimum exists.

Then there are no two points in the support of $\nu$ at  distance between $r/2$ and $2r$.
It is easy to see that the support of $\nu$ can be covered by intervals of length less than
$r/2$ that are of distance more than $2r$.
Hence Lemma \ref{lm:bernoulli-decompose} applies with $r_0=r/2$ and $r_1=2r$ and we have
\[
H\Big(\nu;\frac{r}{2}\Big|r\Big)=2H(\nu;r|2r).
\]

Write $\d=1-\|\nu\|$.
If $\|\d\|\ge 1/2$, the claim of the proposition holds trivially, so we assume that this
is not the case.
Then, by Lemma \ref{lm:entropy-component},
\[
H(\mu;r|2r)\le H(\nu;r|2r)+3\d\log\d^{-1}.
\]

Using Lemma \ref{lm:entropy-component} again, we can write
\[
H\Big(\mu;\frac r2\Big|r\Big)\ge H\Big(\nu;\frac r2\Big|r\Big)=2H(\nu;r|2r)
\ge 2 H(\mu;r|2r)-6\d\log\d^{-1}.
\]
Combining this with our assumption, we get
\[
1.5 H(\mu;r|2r)\ge 2 H(\mu;r|2r)-6\d\log\d^{-1},
\]
hence
\[
\d\log\d^{-1}\ge H(\mu;r|2r)/12.
\]

Now, we suppose to the contrary that the claim is false, that is
\[
\d\le\frac{1}{128}\cdot \frac{ H(\mu;r|2r)}{\log( H(\mu;r|2r)^{-1})+1}.
\]
Writing $h=H(\mu;r|2r)$, and using that $\d\mapsto \d\log \d$ is monotone, we get
\[
\d\log \d^{-1}\le \frac{1}{128}\cdot \frac{ h}{\log(h^{-1})+1}
\cdot (7+\log (h^{-1})+\log(\log(h^{-1})+1))
\le \frac{7h}{128},
\]
a contradiction, since $7/128<1/12$.
\subsection{}
\label{sc:low-entropy-proof}

The purpose of this section is the proof of Theorem \ref{th:low-entropy-convolution}.
We begin with a technical lemma that locates a large number of scales, where both
Propositions \ref{pr:conv-by-Bernoulli} and \ref{pr:bernoulli-decompose} can be applied.

\begin{lem}\label{lm:low-entropy-proof}
For every $0<\a<1/2$, there is a number $c'>0$ such that the following hold.
Let $\mu,\nu,\s_1,\s_2$ and $\b$ be as in Theorem \ref{th:low-entropy-convolution}
and assume that the hypotheses of that theorem hold.
Let $K=\lceil\log(144/\a^2)\rceil+2$.

Then there is a $2K$-separated set $B\subset \Z\cap [\s_2+K,\s_1-K]$ such that
 each $n\in B$ satisfies
\begin{align*}
1.5\cdot H(\nu;2^n|2^{n+1})&> H(\nu; 2^{n-1}|2^n),\\
 H(\nu;2^n|2^{n+1})&\ge \b/12,\\
H(\mu; t|2t)&\le 1-\a \text{ for all $t$ with $2^{n-1}\le t\le 2^{n+1}$.}
\end{align*}
Furthermore
\begin{equation}\label{eq:B-large}
\sum_{n\in B}H(\nu;2^n|2^{n+1})\ge c'\b(\s_1-\s_2)-1.
\quad\footnote{$c'=c_{11}=(1000\log \a^{-1})^{-1}$.}
\end{equation}
\end{lem}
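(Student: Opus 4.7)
The plan is to construct $B$ in three stages: first enforce condition (1) via a geometric-decay argument to produce a large subset $S_1$, then excise from $S_1$ the indices that violate (2) or (3), and finally pass to a $2K$-separated subset by picking one representative per block of length $2K$. Throughout, write $a_n = H(\nu;2^n|2^{n+1})$, so $a_n \le 1$, and by telescoping
\[
\sum_{n=\sigma_2}^{\sigma_1-1} a_n = H(\nu;2^{\sigma_2}|2^{\sigma_1}) > \beta(\sigma_1-\sigma_2).
\]
Trimming $K$ from each end costs at most $2K$ from this sum, giving $\sum_{n \in T} a_n > \beta(\sigma_1-\sigma_2) - 2K$ where $T = [\sigma_2+K,\sigma_1-K-1] \cap \Z$.

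To handle (1), set $S_1 = \{n \in T : 1.5\, a_n > a_{n-1}\}$. For any $n \notin S_1$ one has $a_n \le \tfrac{2}{3} a_{n-1}$; iterating backwards until the nearest predecessor $n^* \in S_1$ (or until the left endpoint of $T$), each $n^* \in S_1$ supports a chain of successor indices whose $a$-masses sum to at most $\sum_{k \ge 0} (2/3)^k a_{n^*} = 3 a_{n^*}$, while the initial unsupported chain contributes at most $\sum_{k \ge 1}(2/3)^k = 2$. Therefore
\[
\sum_{n \in S_1} a_n \ge \tfrac{1}{3}\bigl(\beta(\sigma_1-\sigma_2) - 2K - 2\bigr).
\]

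Next, impose (2) and (3). The set $E_2 = \{n \in S_1 : a_n < \beta/12\}$ obeys $\sum_{E_2} a_n < \beta(\sigma_1-\sigma_2)/12$. For (3), let $X = \{\sigma \in [\sigma_2,\sigma_1] : H(\mu;2^\sigma|2^{\sigma+1}) > 1-\alpha\}$; by the hypothesis of Theorem \ref{th:low-entropy-convolution}, $\mathcal{N}_1(X) < c_\mu \beta(\sigma_1-\sigma_2)$ with $c_\mu = 1/(1000\log\alpha^{-1})$. A greedy maximal $1$-separated family in $X$ exhibits $X$ as contained in a union of $\mathcal{N}_1(X)$ intervals of length $2$, so the set $E_3$ of integers $n$ such that $[n-1,n+1] \cap X \ne \emptyset$ has cardinality at most $5\,\mathcal{N}_1(X)$, whence $\sum_{E_3} a_n \le 5 c_\mu \beta(\sigma_1-\sigma_2)$; since $c_\mu$ is small this term is absorbed by the previous estimates. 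Setting $S_2 = S_1 \setminus (E_2 \cup E_3)$ thus yields $\sum_{n \in S_2} a_n \ge c'\beta(\sigma_1-\sigma_2) - O(K)$ for a positive absolute constant $c'$.

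Finally, partition $T$ into consecutive blocks of length $2K$, and for each block whose intersection with $S_2$ is non-empty include in $B$ the index maximizing $a_n$ on that intersection. Then $B$ is automatically $2K$-separated, and every element satisfies (1)--(3) by virtue of lying in $S_2$. Since each block contains at most $2K$ indices of $S_2$, the block maximum is at least $(2K)^{-1}$ times the block sum, so $\sum_{n \in B} a_n \ge (2K)^{-1}\sum_{n \in S_2} a_n$, which delivers \eqref{eq:B-large} with $c_{11} = c'/(2K)$ of order $1/\log\alpha^{-1}$ as required. The most delicate step is the geometric-chain bookkeeping for (1): the decay factor $2/3$ is exactly matched to the $1.5$ appearing in the hypothesis of Proposition \ref{pr:bernoulli-decompose}, so any slackening of (1) would propagate directly into a correspondingly weaker quantitative bound in Theorem \ref{th:low-entropy-convolution}.
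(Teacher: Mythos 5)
Your proof follows the same three-stage strategy as the paper---a geometric-decay argument to enforce the $1.5$-growth condition, excision of indices violating the $\beta/12$ and $\mu$-conditions, and a thinning step to impose $2K$-separation---so the overall architecture is right; the paper performs the thinning step \emph{second} (right after the decay step) and the excision \emph{last}, whereas you do the excision first and thin last. Both orders work: your $E_2$ removal costs at most $\frac{\beta}{12}(\sigma_1-\sigma_2)$ with $a_n\leq 1$, which is still dominated by the $\frac{1}{3}\beta(\sigma_1-\sigma_2)$ coming from $S_1$, and the $E_3$ removal is harmlessly small. The handling of the initial chain, the factor-$5$ covering bound for $E_3$, and the geometric telescoping all check out.

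The one genuine flaw is in the thinning step. Partitioning $T$ into consecutive blocks of length $2K$ and picking one index from each non-empty block does \emph{not} produce a $2K$-separated set: the choice in block $[j\cdot 2K,(j+1)\cdot 2K)$ can be the right endpoint while the choice in the next block is its left endpoint, giving consecutive picks at distance $1$. Since the $2K$-separation is precisely what makes the intervals $[n-K,n+K)$ disjoint in the application inside the proof of Theorem~\ref{th:low-entropy-convolution}, this cannot be waved away. The fix is easy and cheap: either use blocks of length $2K$ but retain only every other block (halving your constant), or use the paper's greedy scheme (pick the $n\in S_2$ maximizing $a_n$, delete its $2K$-neighborhood, repeat); the greedy scheme shows $\sum_{S_2}a_n\leq 4K\sum_{B}a_n$ because each selected $n$ absorbs at most $4K-1$ integers with $a$-value no larger than $a_n$. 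Either repair leaves your final constant $c_{11}$ of order $1/\log\alpha^{-1}$, as needed.
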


The lemma would hold for any $K$ with the constant $c'$ depending on $K$.
Our choice of the value of $K$ will become relevant later.

\begin{proof}
Write
\[
a=\lfloor\s_1-K\rfloor, b=\lceil\s_2+K\rceil.
\]

Write
\[
B_1=\{n\in[b,a]\cap\Z: 1.5\cdot H(\nu;2^n|2^{n+1})>  H(\nu;2^{n-1}|2^{n})\}.
\]
If $n_1\ge n_2$ are two consecutive elements of $B_1$, then
\[
 H(\nu;2^{n_2+j}|2^{n_2+j+1})\le (1.5)^{-j}  H(\nu;2^{n_2}|2^{n_2+1})
\]
for all $0\le j< n_1-n_2$.
Thus
\[
\sum_{j=n_2}^{n_1-1}  H(\nu;2^{j}|2^{j+1}) \le 3 H(\nu;2^{n_2}|2^{n_2+1}).
\]
A similar argument shows that
\begin{equation}\label{eq:contrib-3}
\sum_{j=b}^{\min B_1 -1} H(\nu;2^{j}|2^{j+1}) \le 3.
\end{equation}
Hence we have
\[
\b(\s_1-\s_2)<H(\nu;2^{\s_2}|2^{\s_1})\le 3 \sum_{n\in B_1}  H(\nu;2^{n}|2^{n+1}) +2K+5.
\]
The term $2K+5$ on the right is the contribution of \eqref{eq:contrib-3} combined with
\begin{align*}
H(\nu;2^{b-K}|2^b),H(\nu;2^{a}|2^{a+K})\le& K,\\
H(\nu;2^{\s_2}|2^{b-K}), H(\nu;2^{a+K}|2^{\s_1})\le& 1.
\end{align*}

We define $B_2\subset B_1$ by the following procedure.
First we select an $n\in B_1$ such that
$H(\nu;2^n|2^{n+1})$ is maximal and declare that $n\in B_2$.
Then in each step, we consider all $n\in B_1$ that are of distance at least $2K$ from
the elements of $B_2$ already selected.
We choose among these elements one such that $H(\nu;2^n|2^{n+1})$ is maximal
and declare it to be an element of $B_2$.
We continue this procedure until there is no $n\in B_1$ of distance at least $2K$ to the
already selected elements of $B_2$.

It is easy to see that the set $B_2$ obtained this way is $2K$ separated and satisfies
\begin{align*}
\b(\s_1-\s_2)-(2K+5)\le&3 \sum_{n\in B_1}  H(\nu;2^{n}|2^{n+1})\\
\le& 3\cdot4K \sum_{n\in B_2}  H(\nu;2^{n}|2^{n+1}).
\end{align*}

We put
\[
B_3:=\{n\in B_2: H(\nu;2^{n}|2^{n+1})>\b/12\}.
\]
Since $|B_2\backslash B_3|\le|B_2|<(\s_1-\s_2)/2K$, we have
\begin{align*}
\sum_{n\in B_3}  H(\nu;2^{n}|2^{n+1})\ge& \frac{\b(\s_1-\s_2)}{12K}-\frac{\b}{12}\frac{\s_1-\s_2}{2K}-\frac{2K+5}{12K}\\
\ge& \frac{1}{24K}\b(\s_1-\s_2)-1.
\end{align*}

Finally, we define $B$ as the set of $n\in B_3$ such that
\[
H(\mu; t|2t) \le 1-\a 
\]
for all $t$ with $2^{n-1}\le t\le 2^{n+1}$.
We clearly have
\[
|B_3\backslash B|\le \cN_1\{ \s\in[\s_2,\s_1]:H(\mu;2^\s|2^{\s+1})>1-\a\}<c\b(\s_1-\s_2),
\]
where $c$ is the number that appears in Theorem \ref{th:low-entropy-convolution}.
If we choose this number sufficiently small depending only on $K$, which depends only on $\a$,
then $B$ satisfies \eqref{eq:B-large}.
\footnote{ \eqref{eq:B-large} holds if we choose both the constant $c$ in Theorem \ref{th:low-entropy-convolution}
and the constant $c'=c_{11}$ in the lemma to be less than $(48K)^{-1}$.
This is satisfied, because $K<2\log\a^{-1}+11\le13\log\a^{-1}$ and $c_{11}=c=(1000\log \a^{-1})^{-1}$.}
\end{proof}

\begin{proof}[Proof of Theorem \ref{th:low-entropy-convolution}]
We can approximate $\nu$ by a finitely supported measure so that we change
its entropy at scales larger than $2^{\sigma_2}$ only by an arbitrarily small amount.
We can use for example Lemma \ref{lm:perturb} with $r_1$ very small.
Therefore, we may assume that $\nu$ is finitely supported.

Let $B$ be as in Lemma \ref{lm:low-entropy-proof} and fix some $n\in B$. 
Since
\[
1.5\cdot H(\nu; 2^n|2^{n+1})\ge H(\nu;2^{n-1}|2^n),
\]
we can apply Proposition \ref{pr:bernoulli-decompose} with $r=2^n$ and
write
\[
\nu=\nu_0+\eta_1+\ldots+\eta_N,
\]
where each $\eta_i$ is a Bernoulli measure supported at a pair of points of distance between
$2^{n-1}$ and $2^{n+1}$, $\nu_0$ is a non-negative measure
and
\[
\|\eta_1\|+\ldots+\|\eta_N\|\ge c(\log \b^{-1})^{-1}H(\nu;2^n|2^{n+1}),
\footnote{$c_{12}=1/1000$. Here we need $c_{12}(\log \b^{-1})^{-1}\le1/128(\log(12/\b)+1)$, which holds since $\b\le1/2$.}
\]
where $c$ is an absolute constant.

By Proposition \ref{pr:conv-by-Bernoulli} (the conditions of the proposition are met by our choice of the value of $K$), we have
\[
H(\mu*\eta_i;2^{n-K}|2^{n+K})\ge \|\eta_i\|\cdot (H(\mu;2^{n-K}|2^{n+K})+\a/3)
\]
for each $i=1,\ldots,N$.
We combine this with the trivial estimate
(coming from Lemma \ref{lm:entropy-conv-nondecrease})
\[
H(\mu*\nu_0;2^{n-K}|2^{n+K})\ge\|\nu_0\|\cdot H(\mu;2^{n-K}|2^{n+K})
\]
and use superadditivity of entropy between scales of integral ratio
(Lemma \ref{lm:conditional-super-additive}) to obtain
\begin{equation}\label{eq:n-in-B}
H(\mu*\nu;2^{n-K}|2^{n+K})\ge H(\mu;2^{n-K}|2^{n+K}) + c(\log \b^{-1})^{-1}H(\nu;2^n|2^{n+1}),
\quad\footnote{$c_{13}=\a/3000$.}
\end{equation}
for some number $c$ that depends only on $\a$.

We note that $H(\mu*\nu;2^m|2^{m+1})\ge H(\mu;2^m|2^{m+1})$ for all $m\in\Z$
by Lemma \ref{lm:entropy-conv-nondecrease}.
We sum this for all $m\in[\s_2,\s_1]\cap\Z$ that is not in $[n-K,n+K)$ for any $n\in B$
together with \eqref{eq:n-in-B} for $n\in B$ and obtain also using \eqref{eq:B-large}
\begin{align*}
H(\mu*\nu;2^{\lceil\s_2\rceil}|&2^{\lfloor\s_1\rfloor})\\
\ge& H(\mu;2^{\lceil\s_2\rceil}|2^{\lfloor\s_1\rfloor})
+ c(\log \b^{-1})^{-1}\sum_{n\in B}H(\nu;2^n|2^{n+1})\quad\footnotemark\\
>& H(\mu;2^{\lceil\s_2\rceil}|2^{\lfloor\s_1\rfloor})
+ c\b(\log \b^{-1})^{-1}(\s_1-\s_2)-1.\quad\footnotemark
\end{align*}
\addtocounter{footnote}{-1}
\footnotetext{$c_{13}=\a/3000$.}
\stepcounter{footnote}
\footnotetext{$c_{14}=\a/(10^7\log\a^{-1})$. Here we need $c_{14}<c_{13}c_{11}=(\a/3000)/(1000\log\a^{-1})$.
Here $c_{11}$ is from Lemma \ref{lm:low-entropy-proof} on page \pageref{eq:B-large}.}
The claim of the theorem follows from this and the inequalities
\begin{align*}
H(\mu*\nu;2^{\s_2}|2^{\s_1})\ge& H(\mu*\nu;2^{\lceil\s_2\rceil}|2^{\lfloor\s_1\rfloor}),\\
H(\mu;2^{\s_2}|2^{\s_1})\le& H(\mu;2^{\lceil\s_2\rceil}|2^{\lfloor\s_1\rfloor})+2.
\end{align*}
\end{proof}

\section{Absolute continuity of Bernoulli convolutions}\label{sc:proof}

The purpose of this section is the proof of Theorem \ref{th:main}.
We begin by recalling the following result of Garsia, which links
absolute continuity of measures to entropy estimates.

\begin{prp}\label{pr:Garsia}
Let $\mu$ be a compactly supported probability measure on $\R$.
Then $\mu$ is absolutely continuous with density in the class $L\log L$
if and only if $\log r^{-1} - H(\mu;r)$ is bounded as $r\to 0$.
\end{prp}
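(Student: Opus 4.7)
The plan is to recast the quantity into a transparent form. Letting $X$ have law $\mu$ and $I_r$ independent and uniform on $[0,r]$, Lemma \ref{lm:second-def} gives
\[
\log r^{-1} - H(\mu;r) = -H(X+I_r),
\]
where the right side is the differential entropy of $X+I_r$, whose density is $f_r(x) = r^{-1}\mu([x-r,x])$. I would thus reduce the proposition to the equivalent statement that $\mu$ has an $L\log L$ density if and only if $H(X+I_r) = -\int f_r \log f_r$ stays bounded below as $r \to 0$.

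For the forward direction, suppose $\mu$ has density $f \in L\log L$. Since $f_r$ is the local average $f_r(x) = r^{-1}\int_{x-r}^x f(y)\,dy$ and $F(t) = -t\log t$ is concave on $[0,\infty)$, Jensen's inequality followed by Fubini will yield
\[
\int F(f_r(x))\,dx \ge \int F(f(y))\,dy = H(\mu) > -\infty,
\]
so $H(X+I_r) \ge H(\mu)$ uniformly in $r$, which is exactly the required lower bound.

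For the converse I would proceed as follows. Assume $H(X+I_r) \ge -C$ for all small $r$. Since $\supp\mu$ is compact, every $f_r$ is supported on a fixed compact set $K$. Splitting $\int f_r\log f_r$ according to whether $f_r \le 1$ or $f_r > 1$, and using $-t\log t \le e^{-1}$ on $[0,1]$, the hypothesis upgrades to the one-sided bound
\[
\int f_r \log^{+} f_r \le C + |K|/e.
\]
De la Vall\'ee Poussin's criterion, applied with $\Phi(t) = t\log^{+} t$, then makes the family $\{f_r\}$ uniformly integrable on $K$. Since $X + I_r$ converges in law to $X$, the densities $f_r\,dx$ converge vaguely to $\mu$, and Dunford--Pettis extracts a weakly $L^1$-convergent subsequence $f_{r_n} \to f$; the limit must be a density for $\mu$, so $\mu$ is absolutely continuous. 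Finally, lower semicontinuity of the convex functional $g \mapsto \int g\log^{+} g$ under weak $L^1$-convergence gives $\int f\log^{+} f < \infty$, i.e.\ $f \in L\log L$.

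The hard part will be the converse: turning the one-sided entropy bound into uniform integrability requires using compactness of $\supp\mu$ to tame the negative tail of $f_r\log f_r$ and then upgrading vague convergence of the densities to weak $L^1$-convergence. The forward direction is a one-line Jensen computation, and the concluding semicontinuity step is standard once $f$ has been produced as a weak $L^1$-limit.
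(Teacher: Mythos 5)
Your proposal is correct, and it is equivalent to the paper's on one side but takes a genuinely different route on the other. The "easy" direction (density in $L\log L$ implies boundedness) is essentially the same in both: your Jensen/Fubini computation is exactly the inequality $H(\mu)\le H(\mu*\chi_r)$, which the paper invokes as \eqref{lowerbound}. Where you diverge is the implication from boundedness to an $L\log L$ density. The paper first establishes absolute continuity by contradiction: it supposes there is a compact null set $E$ with $\mu(E)>0$, applies Jensen on $E+[0,r]$ to show that $\log r^{-1}-H(\mu;r)\geq \mu(E)\log\bigl(\mu(E)/m(E+[0,r])\bigr)+O(1)$ must blow up, and only then deduces the $L\log L$ property via Lebesgue differentiation and Fatou. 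You instead go through uniform integrability: the one-sided bound $\int f_r\log^+ f_r=O(1)$, de la Vall\'ee Poussin, Dunford--Pettis to extract a weak $L^1$ limit $f$, identification of $f$ with the density of $\mu$ via the vague limit of $X+I_r$, and finally weak lower semicontinuity (Mazur plus Fatou) to land in $L\log L$. Your route is a one-shot argument that establishes absolute continuity and $L\log L$ simultaneously, at the cost of invoking a heavier functional-analytic toolkit; the paper's argument is more elementary and self-contained but proceeds in two separate stages (contradiction, then Fatou). Both are complete proofs.
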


This is just a small variation on \cite{garsia-entropy}*{Theorem I.5}, but we give
a proof for the reader's convenience.

\begin{proof}
Suppose that $\log r^{-1} - H(\mu;r)$ is bounded as $r\to 0$
and suppose to the contrary that there is a compact set $E\subset \R$
of Lebesgue measure $0$ such that $\mu(E)>0$.

Note that (see Lemma \ref{lm:second-def})
\[
\log r^{-1} - H(\mu;r)=\int_{-\infty}^\infty \mu*\chi_r(x)\log(\mu*\chi_r(x))dx,
\]
where $\chi_r$ is the density of the uniform distribution on the interval $[0,r]$.

We also observe that
\[
A_r:=\int_{E+[0,r]}\mu*\chi_r(x)dx=\frac{1}{r}\int_0^r\mu(E+[0,r]-t)dt \ge \mu(E).
\]
By Jensen's inequality applied to the function $x\mapsto x\log x$, we get
\begin{align*}
\int_{E+[0,r]} \mu&*\chi_r(x)\log(\mu*\chi_r(x))dx\\
=&m(E+[0,r])\cdot\frac{1}{m(E+[0,r])}\int_{E+[0,r]} \mu*\chi_r(x)\log(\mu*\chi_r(x))dx\\
\ge& m(E+[0,r])\cdot\frac{A_r}{m(E+[0,r])}\log \frac{A_r}{m(E+[0,r])},
\end{align*}
where $m(\cdot)$ denotes Lebesgue measure.

Let $[-B,B]$ be an interval containing the support of $\mu$, and denote
$\a=\min_{x\ge 0} x\log x$.
Then
\[
\int_{\R\backslash (E+[0,r])} \mu*\chi_r(x)\log(\mu*\chi_r(x))dx
\ge (2B+r) \a
\]

Combining our estimates we write
\[
\log r^{-1} - H(\mu;r)\ge \mu(E)\log\frac{\mu(E)}{\log m(E+[0,r])}+(2B+r) \a.
\]
This is unbounded, since $m(E+[0,r])\to 0$ as $r\to 0$, which is  a contradiction.

We have established that the boundedness of $\log r^{-1} - H(\mu;r)$ implies that
$\mu$ is absolutely continuous.
We know show that the density of $\mu$ belongs to $L\log L$.
By the Lebesgue differentiation theorem, then  $\mu*\chi_r\to \mu'$ almost everywhere,
where $\mu'$ is the density of $\mu$.
Using Fatou's lemma, we can write
\begin{align*}
\int \mu'(x)\log\mu'(x)dx\le&\liminf_{r\to 0}\int \mu*\chi_r(x)\log(\mu*\chi_r(x))dx\\
=&\liminf_{r\to 0}(\log r^{-1}-H(\mu;r)),
\end{align*}
which is bounded by assumption.
(Fatou's lemma applies, because $\mu'$ is compactly supported and $x\mapsto x\log x$ is bounded from
below.)
Since $\mu'$ is compactly supported, this implies that it is in the class $L\log L$.

For the converse, we note that if $\mu$ is absolutely continuous with class $L\log L$ density,
then we have
$H(\mu)\le H(\mu*\chi_r)$
by \eqref{lowerbound}.
This can be rewritten as 
\[
\int \mu'(x)\log\mu'(x)\ge\int \mu*\chi_r(x)\log(\mu*\chi_r(x))dx=\log r^{-1} - H(\mu;r),
\]
which proves the claim.
The last equation holds by Lemma \ref{lm:second-def}.
\end{proof}

In the rest of the section, we aim to verify the condition in this proposition for the
Bernoulli convolutions $\mu_{\l,p}$ with parameters that satisfy the hypothesis of Theorem \ref{th:main}.
To this end, we will show that
\begin{equation}\label{eq:polydecay}
H(\mu_{\l,p};r|2r)\ge1-(\log r^{-1})^{-2}
\end{equation}
for all $r$ small enough under the hypothesis of the theorem.
Summing this for $r=2^{-n}$, we clearly satisfy the condition in Proposition \ref{pr:Garsia}
proving Theorem \ref{th:main}.

In Section \ref{sc:HER}, we introduce the condition $k$-HE for probability measures on $\R$.
This condition is designed in a way to ensure that the convolution of two $k$-HE measures
satisfies $(k+1)$-HE.
The proof of this will be a direct application of Theorem \ref{th:high-entropy-convolution}.

We will also see that $k$-HE for sufficiently large $k$ depending on $r$ will imply
\eqref{eq:polydecay}.
At this point it will be left to show that we can decompose $\mu_{\l,p}$ as a convolution product
of sufficiently many measures each of which satisfies $0$-HE.

For $I\subset\R_{>0}$ we write $\mu^{I}$ for the law of the random variable
\[
\sum_{n\in\Z_{\ge 0}: \l^n\in I} \xi_n\l^n,
\]
where $\xi_0,\xi_1,\ldots$ is a sequence of independent random variables with
$\P(\xi_n=1)=p$ and $\P(\xi_n=-1)=1-p$.
(A similar notation was introduced in the unbiased case before.
From this point on, $\l$ and $p$ are considered fixed, so we suppress them in this
notation.)

If $I_1,\ldots, I_K\subset \R_{>0}$ are disjoint intervals, then there is a probability measure $\nu$
such that
\begin{equation}\label{eq:conv-decompose}
\mu_{\l,p}=\mu^{I_1}*\ldots *\mu^{I_K}*\nu.
\end{equation}
In Section \ref{sc:LER}, we will show that we can find intervals $I_j$ such that $\mu^{I_j}$
satisfies $0$-HE.
To that end, we will further decompose $\mu^{I_j}$ as a convolution product and use Theorem \ref{th:low-entropy-convolution},
the separation between the points in the support of $\mu^{I}$ and the estimates
of \cite{BV-entropy}*{Theorem 5} for the entropy $h_{\l,p}$ of the discrete random walk.

\subsection{The high entropy regime}\label{sc:HER}

We fix a large number $A$,\footnote{We can take $A=47$.}
whose value will be chosen depending only on the constant $C$
in Theorem \ref{th:high-entropy-convolution}.
We say that a probability measure $\mu$ supported on a compact subset of $\R$
satisfies the $k$-th high entropy inequality (or $k$-HE) at scale $r$ if
\[
H(\mu;t|2t)\ge 1-2^{-(2^k+3k+A)}
\]
for all $t$ with
\[
|\log t- \log r|\le A(2+\log\log\log r^{-1}-k)\log\log r^{-1}.
\]

Here and everywhere in what follows, we assume that (say) $r<2^{-4}$, hence $\log\log\log r^{-1}$
is defined and is at least $1$.

\begin{prp}\label{pr:high-entropy}
Let $\mu$ and $\nu$ be two compactly supported probability measures on $\R$ and let $r>0$
be a real number and let $k$ be an integer such that
\[
0\le k\le 1+\log\log\log r^{-1}.
\]
Suppose that $\mu$ and $\nu$ both satisfy $k$-{\rm HE} at scale $r$.

If the parameter $A$ fixed above is sufficiently large and $r$ is sufficiently small depending only on
the constant $C$ in Theorem \ref{th:high-entropy-convolution},
then $\mu*\nu$ satisfies $(k+1)$-{\rm HE} at scale $r$.
\end{prp}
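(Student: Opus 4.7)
The plan is to apply Theorem \ref{th:high-entropy-convolution} with the natural choice $\alpha = 2^{-(2^k+3k+A)}$. Fix any target scale $t$ in the $(k+1)$-HE window, i.e., $|\log t - \log r| \le A(1+\log\log\log r^{-1}-k)\log\log r^{-1}$. To feed the theorem we must check that $H(\mu;s|2s), H(\nu;s|2s) \ge 1-\alpha$ for every $s$ with $|\log t - \log s| < 3\log\alpha^{-1} = 3(2^k+3k+A)$. By the triangle inequality,
\[
|\log s - \log r| \le |\log s - \log t| + |\log t - \log r| < 3(2^k+3k+A) + A(1+\log\log\log r^{-1}-k)\log\log r^{-1},
\]
so the containment of this $s$-interval in the $k$-HE window for $r$ reduces to showing $3(2^k+3k+A) \le A\log\log r^{-1}$. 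Using $k \le 1 + \log\log\log r^{-1}$ one has $2^k \le 2\log\log r^{-1}$, and the inequality follows for $A$ sufficiently large (and $r$ sufficiently small), independent of $k$.

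Given the window check, Theorem \ref{th:high-entropy-convolution} yields
\[
H(\mu*\nu; t|2t) \ge 1 - C(\log\alpha^{-1})^3 \alpha^2 = 1 - C(2^k+3k+A)^3 \, 2^{-2(2^k+3k+A)}.
\]
The target $(k+1)$-HE bound is $1 - 2^{-(2^{k+1}+3(k+1)+A)} = 1 - 2^{-(2\cdot 2^k + 3k + 3 + A)}$, so we need
\[
C(2^k+3k+A)^3 \le 2^{3k+A-3}.
\]
This is the arithmetic heart of the proof: the squaring of missing entropy built into Theorem \ref{th:high-entropy-convolution} gives a gain of $2^{2^k + A}$ in the exponent, which is more than enough to absorb both the required new missing-entropy exponent (which grows by $2^k + 3$) and the polylogarithmic loss $(\log \alpha^{-1})^3$. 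The slack of $2^{3k+A-3}$ compared with the cubic factor $(2^k+3k+A)^3$ holds uniformly for all $k$ in the allowed range provided $A$ is chosen large relative to $C$; again the constraint $k \le 1+\log\log\log r^{-1}$ guarantees $2^k$ does not outrun $2^{3k+A-3}$.

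The step that requires the most care is choosing $A$: the window-containment step forces $A$ to dominate the constant in $3(2^k+3k+A) \le A\log\log r^{-1}$, and the arithmetic step forces $A$ to dominate $C$ through $C \le 2^{A-3}$. Both are satisfied simultaneously for any sufficiently large absolute choice of $A$ (depending only on $C$), after which the smallness of $r$ is used only to make $\log\log r^{-1}$ exceed an absolute constant depending on $A$ — exactly matching the statement. Once $A$ is fixed, each of the two containments to be verified is a direct computation with no further input needed beyond the definition of $k$-HE and Theorem \ref{th:high-entropy-convolution}.
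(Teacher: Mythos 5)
Your proof is correct and follows essentially the same route as the paper: apply Theorem \ref{th:high-entropy-convolution} with $\alpha = 2^{-(2^k+3k+A)}$, use the triangle inequality to reduce the window check to $3\log\alpha^{-1}\le A\log\log r^{-1}$, and then verify the arithmetic inequality $C(2^k+3k+A)^3 \le 2^{3k+A-3}$. One small inaccuracy: your parenthetical that the bound $k\le 1+\log\log\log r^{-1}$ is needed so ``$2^k$ does not outrun $2^{3k+A-3}$'' is unnecessary --- that arithmetic inequality holds for all $k\ge 0$ once $A$ is large (as the paper notes, the ratio $(2^k+3k+A)^3/2^{3k}$ is bounded uniformly in $k$), so the constraint on $k$ is only needed for the window-containment step, not here.
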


\begin{proof}
We apply Theorem \ref{th:high-entropy-convolution} with
\[
\a=2^{-(2^k+3k+A)}
\]
at all scales (whose $\log$ is) between
\[
\log r \pm A(1+\log\log\log r^{-1}-k)\log\log r^{-1}.
\]

First we check that
\begin{equation}\label{eq:HE-first-condition}
A\log\log r^{-1}\ge 3\log \a^{-1}
\end{equation}
holds, hence $\mu$ and $\nu$ satisfy the conditions of Theorem \ref{th:high-entropy-convolution}.
We write
\[
\log\a^{-1}=2^k+3k+A\le 2\log\log r^{-1}+3\log\log\log r^{-1}+3+A.
\]
We see that \eqref{eq:HE-first-condition} holds provided $A> 6$ and $r$ is sufficiently small
depending on $A$.

The estimate in Theorem \ref{th:high-entropy-convolution} implies that $\mu*\nu$ satisfies $(k+1)$-HE
at scale $r$ provided
\[
2^{-(2^{k+1}+3(k+1)+A)}\ge C(\log \a^{-1})^3\a^2
=C(2^k+2k+A)^3 2^{-(2^{k+1}+6k+2A)}.
\quad\footnote{$C_{15}=10^{8}$ is the constant from Theorem \ref{th:high-entropy-convolution}.}
\]
This is equivalent to
\[
2^{3k+A-3}\ge C(2^k+3k+A)^3\quad\footnote{$C_{15}=10^{8}$.}
\]
or
\[
2^{A/3-1}\ge C(1+(3k+A)2^{-k}).\quad\footnote{$C_{16}=500>C_{15}^{1/3}$.}
\]
Fixing $k$ and increasing $A$, the left hand side grows faster than the right, hence
the inequality holds for $k=0$ and $k=1$ if we choose $A$
\footnote{$A=47$ works here.}
sufficiently large depending only on $C$.
However, the right hand side is maximal for $k=0$ or $k=1$ for any choice of $A$,
hence the inequality holds for all $k$.
\end{proof}

{}From now on, we assume that the parameter $A$ that appears
in the definition of $k$-HE is sufficiently large
so that Proposition \ref{pr:high-entropy} holds.
This is the only requirement we impose on $A$.

We aim to show that for $r$ sufficiently small, $\mu_{\l,p}$ satisfies  $k$-HE at scale $r$ for
$k=\lfloor\log\log\log r^{-1}+1\rfloor+1$.
This implies \eqref{eq:polydecay}.
Indeed:
\[
H(\mu_{\l,p};r|2r)\ge1-2^{-2^k}\ge1-(\log r^{-1})^{-2}.
\]

We achieve this by decomposing $\mu_{\l,p}$ as the convolution product of $2^k$ measures
that satisfy $0$-HE and another arbitrary measure
as in \eqref{eq:conv-decompose}, and  then use  Proposition \ref{pr:high-entropy} iteratively.

In the next section, we prove Proposition \ref{pr:low-entropy-regime}, which implies that
under the hypothesis of Theorem \ref{th:main}, the decomposition \eqref{eq:conv-decompose}
exists with $K=\lceil4\log\log r^{-1}\rceil$ such that each $\mu^{I_j}$ satisfies $0$-HE at scale $r$.
In light of the above comments, this proves \eqref{eq:polydecay} and Theorem \ref{th:main}
in turn.

\subsection{The low entropy regime}\label{sc:LER}

The aim of this section is to prove the following result, which completes the proof of Theorem \ref{th:main}.
In this section, the values of the constants given in the footnotes are valid under the additional hypothesis
that $\l$ is not the root of a polynomial with coefficients $-1$, $0$ and $1$ and $1/4\le p\le 3/4$.

\begin{prp}\label{pr:low-entropy-regime}
There is a number $c>0$ depending only on $p$ such that the following holds.
Let $\l<1$ be an algebraic number and suppose that
\[
\l> 1- c\min(\log M_\l,(\log (M_\l+1))^{-1}(\log\log(M_\l+2))^{-3}).
\quad\footnote{$c_{24}=10^{-37}$ under the additional assumption
that $\l$ is not a root of a polynomial with coefficients $-1$, $0$ and $1$ and $1/4\le p\le3/4$.}
\]
Suppose that $r>0$ is sufficiently small.

Then there are at least $4\log\log r^{-1}$ pairwise disjoint intervals $I$ such that
$\mu^I$ satisfies $0$-{\rm HE} at scale $r$.
\end{prp}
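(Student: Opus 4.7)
\medskip
The plan is to take as the disjoint intervals $I_j := (\l^{(j+1)L}, \l^{jL}]$ for $j = 0, 1, \ldots, K-1$, where $K = \lceil 4 \log\log r^{-1} \rceil$ and $L$ is a large integer to be chosen. These are manifestly pairwise disjoint, and $\mu^{I_j}$ is the law of $\l^{jL} Y_j$ where $Y_j = \sum_{n=0}^{L-1} \xi_{jL+n} \l^n$ is an independent copy of the random variable underlying $\mu^{(\l^L, 1]}$. Consequently, $\mu^{I_j}$ satisfies $0$-HE at scale $r$ iff $\mu^{(\l^L, 1]}$ satisfies it at scale $r \l^{-jL}$; choosing $L$ so that $KL(1 - \l)$ is small compared with the $\log$-width $\sim A(\log\log r^{-1})(\log\log\log r^{-1})$ of the $0$-HE window reduces the statement for all $K$ intervals to a single assertion about $\mu^{(\l^L, 1]}$.

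\medskip
The entropy input is the Garsia separation already recorded in Section~\ref{sc:strategy}: any two points in the support of $\mu^{(\l^l, 1]}$ are at distance at least $c_\l l^{-a} M_\l^{-l}$, so for any $\a < M_\l^{-1}$ we have $H(\mu^{(\l^l, 1]}; \a^l) \ge h_\l \cdot l$. Combined with the trivial upper bound $H(\mu^{(\l^l, 1]}; \b^l) \le l \log \b^{-1} + O(1)$ and the bound $h_\l \ge c_0 \min(\log M_\l, 1)$ of \cite{BV-entropy}*{Theorem 5}, this yields a positive conditional-entropy density for $\mu^{(\l^l, 1]}$ in a definite log-scale window, for a judicious choice of $\b < 2^{-h_\l}$.

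\medskip
To promote this to the pointwise $0$-HE bound, further decompose $\mu^{I_j}$ as a convolution $\nu_1^{(j)} * \cdots * \nu_M^{(j)}$ by partitioning the block of $L$ indices into $M$ sub-blocks of length $l$. Each factor $\nu_i^{(j)}$ inherits the entropy bound above, shifted in log-scale by $(jL + il)\log \l^{-1}$. Iteratively apply Theorem~\ref{th:low-entropy-convolution} to the running partial convolutions: as long as a positive density of scales $\s$ in the $0$-HE window fail the condition $H(\,\cdot\,;2^\s | 2^{\s+1}) > 1 - 2^{-(A+1)}$, convolving in the next factor boosts the total conditional entropy over the window by an additive amount at least $c \b (\log \b^{-1})^{-1}$ times the window width. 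Since the total deficit is the sum of the nonnegative scale-by-scale deficits, $O(A)$ such iterations reduce the total deficit, and hence the deficit at every individual scale, below $2^{-(A+1)}$, yielding $0$-HE for $\mu^{I_j}$.

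\medskip
The main obstacle will be reconciling the scale parameters that appear: the $0$-HE window width, the shifts $jL \log \l^{-1}$ between different $I_j$, the per-factor Garsia window, and the total index range $KL$, into one consistent choice of $L$ and $l$. The constraints that $KL(1 - \l)$ fit inside the $0$-HE window and that $Ml \cdot \min(\log M_\l, 1)$ be large enough to support $O(A)$ iterations of Theorem~\ref{th:low-entropy-convolution}, combined with $K \ge 4 \log\log r^{-1}$, force the hypothesis
\[
1 - \l \lesssim (\log M_\l)^{-1} (\log\log M_\l)^{-3};
\]
the cubic log-log factor is what absorbs the $(\log \b^{-1})^{-1}$ loss in the entropy gain of Theorem~\ref{th:low-entropy-convolution}. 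The alternative branch $\l > 1 - c(M_\l - 1)$ in the hypothesis handles the regime where $M_\l$ is close to $1$ and the entropy density $h_\l$ of each factor degenerates.
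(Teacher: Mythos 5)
Your plan has a genuine gap at the step where you want to upgrade the conclusion of Theorem~\ref{th:low-entropy-convolution} to $0$-HE for a single fixed interval $I_j$. When the hypothesis of Theorem~\ref{th:low-entropy-convolution} fails for the running partial convolution, you learn that at least a $c\b$ \emph{fraction} of scales $\s$ in the window satisfy $H(\,\cdot\,;2^\s|2^{\s+1})>1-\a$; you do not learn that \emph{all} scales in the window do. But $0$-{\rm HE} at scale $r$ requires $H(\,\cdot\,;t|2t)\ge 1-2^{-(A+1)}$ for every $t$ in a window of width $\sim A(\log\log r^{-1})(\log\log\log r^{-1})$ around $\log r$. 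Your inference ``total deficit small, hence deficit at every individual scale small'' is false: a small total deficit over a window is compatible with a few scales having deficit of order~$1$. No number of iterations of Theorem~\ref{th:low-entropy-convolution} will drive the \emph{pointwise} deficit below $2^{-(A+1)}$ at every scale; the upper bound on conditional entropy in a window simply forces the hypothesis to fail before the pointwise bound could be reached, and then Theorem~\ref{th:low-entropy-convolution} gives no further information.

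The paper resolves exactly this difficulty with a rescaling device, Lemma~\ref{lm:HE0}, that you have no analogue of. The point is that if $\mu^{(t^2,t)}$ has high entropy at a \emph{single} scale $s$, then — precisely because $\mu^{(t^2 \l^k,t\l^k)}$ is a $\l^k$-rescaled copy of $\mu^{(t^2,t)}$ and therefore has high entropy at scale $\l^k s$ for every integer $k$ — the measure $\mu^{(t^3 r/s,\,r/s)}$, which contains all these rescaled blocks, has entropy $\ge 1-2^{-(A+1)}$ at \emph{every} scale $\tilde t$ with $|\log\tilde t-\log r|$ at most roughly $\log t^{-1}$. Crucially the interval $I=(t^3 r/s,r/s)$ depends on the scale $s$: the paper uses a \emph{different} interval $I_i$ for each of the high-entropy scales $s_i$ that the failure of Theorem~\ref{th:low-entropy-convolution} produces. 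Your fixed geometric decomposition $I_j=(\l^{(j+1)L},\l^{jL}]$ cannot implement this — it forces you to need all scales simultaneously for one measure, which is exactly the point where your argument stalls. Replacing the consecutive-blocks decomposition by the paper's scale-adapted family of intervals, and inserting the rescaling argument of Lemma~\ref{lm:HE0} as the bridge from ``many high-entropy scales'' to ``$0$-HE for a family of intervals'', is the missing step.
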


\subsubsection{}
In this section, we prove a technical lemma, which shows that for proving
that $\mu^{I}$ satisfies $0$-HE, it is enough to show that $\mu^{I'}$ has large entropy
at a single scale for some $I'$ slightly smaller than $I$.

We will use this to show that if  Theorem \ref{th:low-entropy-convolution}
cannot be applied for $\mu^{I'}$,
because the required upper bound on its entropy fails,
then $\mu^{I}$ satisfies $0$-HE for a corresponding interval $I$.

\begin{lem}\label{lm:HE0}
There is a number $\a_0>0$
\footnote{$\a_0=2^{47}/10$.}
such that the following holds.
Let  $0<\l,r,s,t<1$ be real numbers.
Suppose that
\[
\log t^{-1}>(\log\log r^{-1})^{2}, \quad r<s< t^4,\quad  
\l>1-\a_0,
\]
\[
H(\mu^{(t^2,t)};s|2s)>1-\a_0.
\]

Then the measure $\mu^{(t^3r/s,r/s)}$ satisfies $0$-{\rm HE} at scale $r$, provided $r$ is sufficiently small
depending only on $A$ (the parameter appearing in the definition of $k$-{\rm HE}).
\end{lem}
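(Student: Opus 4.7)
The plan is to exhibit, for every scale $r'$ in the $0$-HE window around $r$ (that is, $|\log r'-\log r|\le A(2+\log\log\log r^{-1})\log\log r^{-1}$), a sub-interval $I_{r'}\subset(t^3r/s,r/s)$ such that $\mu^{I_{r'}}$ is a rescaled copy of $\mu^{(t^2,t)}$ whose natural scale is $\approx r'$. The single-scale hypothesis $H(\mu^{(t^2,t)};s|2s)>1-\alpha_0$ will then transfer by \eqref{eq:scale-scaling} into $H(\mu^{I_{r'}};r'|2r')>1-O(\alpha_0)$, and Lemma~\ref{lm:entropy-conv-nondecrease} will propagate this bound to $\mu^{(t^3r/s,r/s)}$ because $\mu^{(t^3r/s,r/s)}=\mu^{I_{r'}}*\mu^{(t^3r/s,r/s)\setminus I_{r'}}$ is a convolution of independent factors (the two index sets of $\xi_n$'s involved are disjoint).

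The key tool is the scaling identity $\mu^{\lambda^k I}=\lambda^k\cdot\mu^I$ in distribution, which follows from shift-invariance of the iid sequence $(\xi_n)$. Let $N$ be the integer with $\lambda^N$ closest to $t$ (from above) and $M$ the integer with $\lambda^M$ closest to $r/s$, so that $(t^3r/s,r/s)$ coincides, up to negligible factors, with $(\lambda^{M+3N},\lambda^M)$. Given a target scale $r'$, pick the integer $m=m(r')$ with $\lambda^m s\approx r'$ and set $I_{r'}:=(\lambda^{m+2N},\lambda^{m+N})=\lambda^m\cdot(t^2,t)$. Then $\mu^{I_{r'}}=\lambda^m\cdot\mu^{(t^2,t)}$, so \eqref{eq:scale-scaling} yields $H(\mu^{I_{r'}};\lambda^m s|2\lambda^m s)>1-\alpha_0$, which Lemma~\ref{lemma:Lipschitz} upgrades to $H(\mu^{I_{r'}};r'|2r')>1-\alpha_0-O(|\log\lambda|)=1-O(\alpha_0)$ using the bound $\lambda>1-\alpha_0$.

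The containment $I_{r'}\subset(t^3r/s,r/s)$ forces $m\in[M-N,M+N]$, which translates through $\lambda^m s\approx r'$ and $\lambda^N\approx t$ into $r'\in[tr,r/t]$. The hypothesis $\log t^{-1}>(\log\log r^{-1})^2$, combined with the crude bound $A(2+\log\log\log r^{-1})\log\log r^{-1}\ll(\log\log r^{-1})^2$ valid for $r$ small in terms of $A$, guarantees $t\le 2^{-\Delta}$, so the entire $0$-HE window lies inside $[tr,r/t]$ and an admissible $m(r')$ always exists. Since the ratio $2r'/r'=2$ is an integer, Lemma~\ref{lm:entropy-conv-nondecrease} then gives
\[
H(\mu^{(t^3r/s,r/s)};r'|2r')\ge H(\mu^{I_{r'}};r'|2r')>1-O(\alpha_0),
\]
and fixing $\alpha_0$ as a sufficiently small absolute multiple of $2^{-A}$ turns this into the required bound $1-2^{-A}$ uniformly in $r'$.

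The main obstacle, though more tedious than deep, is the careful bookkeeping of the accumulated rounding errors — the integer choices of $N$, $M$, $m$; the fact that $\lambda^N$ only approximates $t$; and the Lipschitz correction from $\lambda^m s$ to $r'$ — to confirm that each contributes at most an $O(\alpha_0)$ loss in entropy, so that the cumulative error remains safely below $2^{-A}$ for an appropriate absolute choice of $\alpha_0$.
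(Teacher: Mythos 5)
Your argument is correct and is essentially the same as the paper's proof: for each target scale $r'$ in the $0$-HE window, you translate the interval $(t^2,t)$ by an integer power $\lambda^m$ of $\lambda$ chosen so that $\lambda^m s\approx r'$, use the exact self-similarity identity $\mu^{\lambda^m(t^2,t)}=\lambda^m\cdot\mu^{(t^2,t)}$ together with \eqref{eq:scale-scaling}, propagate the entropy bound to $\mu^{(t^3r/s,r/s)}$ by Lemma~\ref{lm:entropy-conv-nondecrease} after checking the containment (forced by $\log t^{-1}>(\log\log r^{-1})^2$), and finish with the Lipschitz correction of Lemma~\ref{lemma:Lipschitz} and the bound $\lambda>1-\alpha_0$. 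One small cleanup: the identity $(\lambda^{m+2N},\lambda^{m+N})=\lambda^m\cdot(t^2,t)$ is false unless $\lambda^N=t$ exactly, and unlike the later rounding you list, this is not a harmless perturbation at the working scale $s<t^4$; but since the subsequent line $\mu^{I_{r'}}=\lambda^m\cdot\mu^{(t^2,t)}$ shows you really mean $I_{r'}=(\lambda^mt^2,\lambda^mt)$, the fix is simply to define $I_{r'}$ that way from the start (as the paper does) and drop the auxiliary integers $N,M$ entirely, which removes this approximation issue and shortens the bookkeeping.
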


Informally, the proof goes as follows.
We need to show that
\[
H(\mu^{(t^3r/s,r/s)};\wt t|2\wt t)\ge 1-2^{-(A+1)}
\]
for all scales $\wt t$ near $r$.
Assuming that $\wt t/s$ is a power of $\lambda$, by \eqref{eq:scale-scaling}, we have
\[
H(\mu^{(t^3r/s,r/s)};\wt t|2\wt t)=H(\mu^{(t^3r/\wt t,r/\wt t)};s|2s),
\]
since $\mu^{\l^k I}$ is obtained from $\mu^I$ by scaling by $\lambda^k$.
In this case, the claim follows by Lemma \ref{lm:entropy-conv-nondecrease}.
We proceed with the details.

\begin{proof}
Let $\wt t$ be a number with
\[
|\log \wt t-\log r|\le (\log\log r^{-1})^2/2.
\]
If $r$ is sufficiently small, this includes all scales we need to consider in the definition of $0$-HE.

Let $k$ be an integer such that
\begin{equation}\label{eq:k-def}
|\log (s\l^k)-\log \wt t|<\log \l^{-1}.
\end{equation}
We then have
\[
|\log(\l^k)-\log (r/s)|< (\log\log r^{-1})^2/2+\log \l^{-1}<(\log\log r^{-1})^2<\log t^{-1},
\]
provided $r$ is sufficiently small.

Thus $(t^2\l^k,t\l^k)\subset (t^3r/s,r/s)$ and consequently
\begin{align*}
H(\mu^{(t^3r/s,r/s)};\l^ks|2\l^ks)\ge&
H(\mu^{(t^2\l^k,t\l^k)};\l^ks|2\l^ks)\\
=&
H(\mu^{(t^2,t)};s|2s)\\
>&1-\a_0.
\end{align*}
Here we used Lemma \ref{lm:entropy-conv-nondecrease} and then
\eqref{eq:scale-scaling}. 

We combine this with Lemma \ref{lemma:Lipschitz} and
\eqref{eq:k-def}, then use $\l>1-\a_0$ to obtain
\begin{align*}
H(\mu^{(t^3r/s,r/s)};\wt t|2\wt t)>&1-\a_0-2\log\l^{-1}\\
\ge& 1-\a_0 -2\log(1-\a_0)^{-1}\\
\ge&1-2^{-(A+1)}\quad\footnotemark
\end{align*}
\footnotetext{Using $-\log(1-x)\le 2x$, which is valid for $0\le x\le 1/2$, we have
$1-\a_0 -2\log (1-\a_0)^{-1}=1-\a_0 +2\log(1- \a_0) \ge 1- 2^{-47}/10-4\cdot2^{-47}/10=1-2^{-48}$.}
provided $\a_0$ is sufficiently small.
\end{proof}

\subsubsection{}

Our aim in this section is to show that for any sufficiently small number $t>0$,
there are many scales $s$ such that $H(\mu^{(t^2,t)};s|2s)>1-\a$, where $\a>0$ is
an arbitrary (but a previously fixed) number.
We will then combine this with Lemma \ref{lm:HE0} to find intervals $I$ such that
$\mu^I$ satisfies $0$-HE at suitable scales.

The argument is simple, but requires some technical calculations, which obscure
the ideas.
For this reason, we first explain the strategy without the detailed calculations.
We fix some carefully chosen parameters $0<\tau<t<1$ and $\ell\in\Z_{>0}$.
We consider numbers $a$ such that $(a\l^\ell,a]\subset (t^2,t)$.
We use certain Diophantine considerations going back to Garsia \cite{Garsia-arithmetic}
to bound from below the separation between the points in the support of the measure
$\mu^{(a\l^\ell,a]}$.
We set our parameters to ensure that this separation is at least $\tau$.

This allows us to estimate $H(\mu^{(a\l^\ell,a]};\tau)$ in terms of the Shannon entropy
$H(\mu^{(a\l^\ell,a]})\ge h_{\l,p} \ell$.
Recall that
\[
h_{\l,p}=\lim_{\ell\to\infty}\frac{H(\mu^{(\l^\ell,1]})}{\ell},
\]
where the sequence in the limit is monotone non-increasing.
We then plug in the bound on $h_{\l,p}$ from \cite{BV-entropy}.
We choose another parameter $\tau_1>\tau$ so that $\log\tau_1$ is at most half
the lower bound that we gave for $H(\mu^{(a\l^\ell,a]};\tau)$.
This will yield the bound
\[
H(\mu^{(a\l^\ell,a]};\tau|\tau_1)\ge \frac{c}{\log(M_\l+1)}\log\tau^{-1}.
\]

Then we consider a sequence $a_i$ such that the intervals $(a_i\l^\ell,a_i]\subset (t^2,t)$
are disjoint and apply Theorem \ref{th:low-entropy-convolution} repeatedly
with $\b=c/\log(M_\l+1)$ for the measures
\[
\mu=\mu^{(a_1\l^\ell,a_1]}*\ldots*\mu^{(a_i\l^\ell,a_i]},\quad
\nu=\mu^{(a_{i+1}\l^\ell,a_{i+1}]}.
\]
We will see that if $\l$ is sufficiently close to $1$, then we can find sufficiently many such intervals
so that the combined contributions of the entropy increments given by
Theorem \ref{th:low-entropy-convolution}
would exceed $\log\tau^{-1}$,
which is impossible.
This means that in one of the steps, the hypothesis of Theorem \ref{th:low-entropy-convolution}
must fail for $\mu=\mu^{(a_1\l^\ell,a_1]}*\ldots*\mu^{(a_i\l^\ell,a_i]}$, that is,
\[
H(\mu^{(a_1\l^\ell,a_1]}*\ldots*\mu^{(a_i\l^\ell,a_i]};s|2s)>1-\a
\]
for many $s\in(\tau,\tau_1)$.
This is what we wanted to do.

We turn to the details.
We first recall the following result of Garsia we alluded to above.
\begin{lem}[\cite{Garsia-arithmetic}*{Lemma 1.51}]\label{lm:Garsia}
Let $\l$ be an algebraic number and denote by $d$ the number of its Galois conjugates
that lie on the unit circle.

Then there is a number $c=c_\l$ such that the following holds.
Let $\ell\in\Z_{>0}$ and $b_0,\ldots, b_\ell\in\{-1,0,1\}$.
Then
\[
\Big|\sum_{j=0}^\ell b_j \l^j\Big|>c_\l \ell^{-d}M_\l^{-\ell}.
\]
\end{lem}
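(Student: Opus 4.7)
The plan is to apply the standard algebraic-integer/resultant argument that gives lower bounds for values of polynomials at algebraic numbers, being careful to track the contribution of each Galois conjugate according to its modulus, so that the unit-circle conjugates produce exactly the $l^{-d}$ factor.

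First I would fix a tuple $(b_0,\dots,b_l)\in\{-1,0,1\}^{l+1}$ and set $P(x)=\sum_{j=0}^l b_j x^j\in\Z[x]$, assuming $P(\l)\neq0$ (which is the content of the lemma; if $P(\l)=0$ there is nothing to prove since the lemma concerns nonzero values). Let $\l=\l_1,\l_2,\dots,\l_n$ be the Galois conjugates of $\l$ and $P_\l(x)=a\prod_{i=1}^n(x-\l_i)\in\Z[x]$ its minimal polynomial. The key integrality input is that the resultant
\[
\mathrm{Res}(P_\l,P)=a^l\prod_{i=1}^n P(\l_i)
\]
lies in $\Z$, and it is nonzero because $P(\l_1)=P(\l)\neq0$ and $P_\l$ is irreducible. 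Hence $|a^l\prod_i P(\l_i)|\ge 1$, so
\[
|P(\l)|\ge \frac{1}{a^l\prod_{i=2}^n |P(\l_i)|}.
\]

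The core estimation step is to bound each $|P(\l_i)|$ for $i\ge2$ according to where $\l_i$ sits relative to the unit circle, using only $|b_j|\le 1$:
\begin{itemize}
\item if $|\l_i|>1$, then $|P(\l_i)|\le\sum_{j=0}^l|\l_i|^j\le\frac{|\l_i|}{|\l_i|-1}\cdot|\l_i|^l$;
\item if $|\l_i|<1$ (this includes $i=1$, which we omit), then $|P(\l_i)|\le\frac{1}{1-|\l_i|}$;
\item if $|\l_i|=1$, then $|P(\l_i)|\le l+1$.
\end{itemize}
Since $\l<1$ the index $i=1$ contributes nothing to $\prod_{|\l_i|>1}|\l_i|$, so by the definition of the Mahler measure $\prod_{|\l_i|>1}|\l_i|^l=(M_\l/a)^l$. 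Collecting the three types of contributions and absorbing the $\l$-dependent constants $|\l_i|/(|\l_i|-1)$ and $1/(1-|\l_i|)$ into a constant $C_\l$, I get
\[
\prod_{i=2}^n|P(\l_i)|\;\le\;C_\l\,(l+1)^d\,(M_\l/a)^l.
\]

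Substituting back yields
\[
|P(\l)|\;\ge\;\frac{1}{a^l\cdot C_\l(l+1)^d(M_\l/a)^l}\;=\;\frac{1}{C_\l(l+1)^d M_\l^l}\;\ge\;c_\l\,l^{-d}M_\l^{-l},
\]
where the last step uses $(l+1)^d\le(2l)^d$ for $l\ge1$ and renames constants. The only genuine subtlety, and the place where the exponent $d$ in the statement really comes from, is the unit-circle case: there one has no decay and no growth, and the crude triangle-inequality bound $l+1$ is essentially sharp, so each such conjugate costs a factor of order $l$. Handling the $|\l_i|>1$ case requires resisting the temptation to use the loose bound $(l+1)|\l_i|^l$, since that would produce additional spurious factors of $l$; the geometric-series estimate $|\l_i|^{l+1}/(|\l_i|-1)$ keeps the $l$-dependence only on the unit circle, which is exactly what is needed to match the exponent $d$ in the statement.
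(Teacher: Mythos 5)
Your proof is correct, and it is essentially the standard resultant/norm argument that Garsia himself uses for Lemma~1.51 — the paper cites the lemma without reproving it. Two small points worth tidying: the resultant identity is $\mathrm{Res}(P_\l,P)=a^{\deg P}\prod_{i=1}^n P(\l_i)$, which equals $a^l\prod_i P(\l_i)$ only when $b_l\ne 0$; when $\deg P<l$ your displayed identity is off, but since $a\ge 1$ the inequality $|P(\l)|\ge 1/(a^l\prod_{i\ge 2}|P(\l_i)|)$ still follows from $|a^{\deg P}\prod_i P(\l_i)|\ge 1$, so the conclusion is unharmed. Also, the hypothesis $\sum b_j\l^j\ne 0$ is implicit in the paper's statement (it is applied to differences of distinct support points), and you are right to single it out, since for $\l$ that do satisfy a $\{-1,0,1\}$-relation the inequality as literally written fails for that relation. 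Your observation about the sharpness of the unit-circle bound $l+1$ and the need for the geometric-series estimate off the circle is exactly the crux of why the exponent is $d$ and not $\deg\l$.
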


\begin{lem}\label{lm:separation}
For every $0<p<1$, there is a number $c>0$ such that the following holds.
Let $0<\l<1$ be an algebraic number and
let $0<\tau<1$ be a number.
Let
\[
\ell=\Big\lfloor\frac{\log\tau^{-1}}{2\log M_\l}\Big\rfloor.
\]

If $\tau$ is sufficiently small depending only on $\l$, then there is number
$\tau_1$ such that $\tau_1/\tau\in\Z$, and
\begin{align*}
\log\tau_1^{-1}\ge&\frac{c\log \tau^{-1}}{\log (M_\l+1)},\quad\footnotemark\\
H(\mu^{(a\l^\ell,a]};\tau|\tau_1)\ge&\frac{c\log \tau^{-1}}{\log (M_\l+1)}
\end{align*}
\footnotetext{$c_{17}=1/20$ in both lines if $\l$ is not a root of a polynomial with coefficients $-1$, $0$ and $1$ and $1/4\le p\le 3/4$.
\label{page:c17}}
holds for all $\tau^{1/3}<a<1$.
\end{lem}

\begin{proof}
Any two points in the support of $\mu^{(a\l^\ell,a]}$ are of the form
\[
\l^k\sum_{j=0}^{\ell-1}\omega_j\l^j,\quad \l^k\sum_{j=0}^{\ell-1}\omega_j'\l^j,
\]
where $k$ is a positive integer such that $\l^k\in(a\l,a]$ and $\omega_j,\omega_j'=\pm1$.
Hence the difference of these two points is
\[
\Big|\l^k\sum_{j=0}^{\ell-1}(\omega_j\l^j-\omega_j'\l^j)\Big|=2\l^k\Big|\sum_{j=0}^{\ell-1} b_j\l^j\Big|
>a\Big|\sum_{j=0}^{\ell-1} b_j\l^j\Big|,
\]
where $b_j\in\{-1,0,1\}$.

By Lemma \ref{lm:Garsia}, 
any two distinct points in the support of $\mu^{(a\l^\ell,a]}$
are of distance at least 
$c_\l \ell^{-d} M_\l^{-\ell}a$.
By $a\ge \tau^{1/3}$ and the choice of $\ell$,
this is greater than $\tau$,
provided $\tau$ is small enough (depending only on $\l$).

Thus
\[
H(\mu^{(a\l^\ell,a]};\tau)=H(\mu^{(a\l^\ell,a]})
\ge h_{\l,p} \ell.
\]
We plug in the bound for $h_{\l,p}$ from
\cite{BV-entropy}*{Theorem 5} (see also Remark 6 there for the biased case),
and obtain
\[
H(\mu^{(a\l^\ell,a]};\tau)
\ge c_p\min(1,\log M_\l) \ell\ge \frac{c_p\log\tau^{-1}}{3\log (M_\l+1)}
\quad\footnote{If $\l$ is not a  root of a polynomial with coefficients $-1$, $0$ and $1$,
then we do not need to apply \cite{BV-entropy}*{Theorem 5}.
In this case, all the points $\sum_{j:\l^j\in(a\l^\ell,a)}\pm\l^j$ are distinct,
hence $H(\mu^{(a\l^\ell,a]})=(-p\log p-(1-p)\log(1-p))|\{j:\l^j\in(a\l^\ell,a]\}|\ge \ell/2$,
provided $1/4\le p \le 3/4$.
Therefore, we can take $c_p=1/2$.}
\]
using again  the definition of $\ell$, provided $\tau$ is small enough (depending only on $\l$),
where $c_p$ is a constant
depending only on $p$.

Fix a number $\tau_1>0$.
We note that
\[
H(\mu^{(a\l^\ell,a]};\tau_1)<\log\tau_1^{-1}+C_\l
\]
for some number $C_\l$ depending only on $\l$.
Indeed, if $L$ is a number larger than the length of the interval $\supp\mu_{\l,p}$, then
$H(\mu^{(a\l^\ell,a]};L)\le 1$.
If we choose $L$ so that $L\tau_1^{-1}$ is an integer, then we can take $C_\l=1+\log L$.

Thus
\[
H(\mu^{(a\l^\ell,a]};\tau|\tau_1)\ge \frac{c_p \log \tau^{-1}}{3\log (M_\l+1)}-\log \tau_1^{-1}-C_\l.
\]
It is easily seen from this formula that a suitable choice of $\tau_1$ is possible.
\footnote{Indeed, we can set $\tau_1=B\tau$, where $B$ is the largest integer such that
$\log (B\tau)\le(\log \tau)/(20\log (M_\l+1))$ holds.}
\end{proof}

In the next lemma we apply Theorem \ref{th:low-entropy-convolution}
repeatedly for the measures $\mu^{(a\l^\ell,a]}$ and make use
of the entropy bounds provided by Lemma \ref{lm:separation}.

We introduce the shorthand
\[
K_\l=\log(M_\l+1)\log\log(M_\l+2),
\]
which we continue to use until the end of the paper.

\begin{lem}\label{lm:low-ent1}
For any numbers $0<\a<1/2$, $0<p<1$, there is a number $c>0$ such that the following holds.
Let $0<\l<1$ be an algebraic number.
Suppose that $0<\tau$ is sufficiently small depending only on $\lambda$, $\alpha$ and $p$.
Choose a number $0<t<1$ such that
\begin{equation}\label{eq:lambda-LE1}
\l> 1- c\frac{\log (t)\min(\log M_\l,1)}{\log (\tau)\log\log (M_\l+2)},
\quad\footnote{$c_{19}=\a/(10^{10}\log\a^{-1})$.}
\end{equation}
\[
t\ge\tau^{1/6}.
\]

Then there is an integer
\[
K\ge cK_\l^{-1}\log \tau^{-1}\quad\footnote{$c_{21}=1/(10^5\log \a^{-1})$.}
\]
and real numbers
\[
\tau^{c(\log (M_\l+1))^{-1}}>s_1>\ldots>s_K>\tau\quad\footnote{$c_{17}=1/20$.}
\]
such that $s_i>2s_{i+1}$ and
\[
H(\mu^{(t^2,t)};s_i|2s_i)>1-\a
\]
for all $i$.
\end{lem}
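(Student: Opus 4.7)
The plan is to decompose $\mu^{(t^2,t)}$ as a convolution $\mu_n\ast\nu'$ with $\mu_n=\mu^{I_1}\ast\cdots\ast\mu^{I_n}$ for a carefully chosen family of pairwise disjoint intervals $I_1,\ldots,I_n\subset(t^2,t)$, and then to iterate Theorem~\ref{th:low-entropy-convolution} on the pairs $(\mu_i,\mu^{I_{i+1}})$. Concretely, I would set $l=\lfloor\log\tau^{-1}/(2\log M_\l)\rfloor$ and $a_i=t\l^{(i-1)l}$, so that the intervals $I_i:=(a_i\l^l,a_i)$ are pairwise disjoint and fit inside $(t^2,t)$ as long as $il\log\l^{-1}\le\log(t^{-1})$. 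Lemma~\ref{lm:separation} supplies a single $\tau_1$ (depending only on $\tau$ and $\l$) with $\log\tau_1\le c_1\log\tau/\log(M_\l+1)$ and $H(\mu^{I_i};\tau|\tau_1)\ge c_1\log(\tau^{-1})/\log(M_\l+1)$ for each $i$. Writing $\sigma_2=\log\tau$ and $\sigma_1=\log\tau_1$, this rephrases as $H(\mu^{I_i};2^{\sigma_2}|2^{\sigma_1})\ge \b(\sigma_1-\sigma_2)$ for some $\b$ which is a fixed positive multiple of $1/\log(M_\l+1)$, verifying the hypothesis on $\nu$ in Theorem~\ref{th:low-entropy-convolution}.

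Next I would run the following dichotomy for $i=1,2,\ldots$: if the hypothesis of Theorem~\ref{th:low-entropy-convolution} applied to $(\mu_i,\mu^{I_{i+1}})$ fails, then by definition there are at least $c\b(\sigma_1-\sigma_2)$ many $1$-separated values $\sigma\in[\sigma_2,\sigma_1]$ with $H(\mu_i;2^\sigma|2^{\sigma+1})>1-\a$; since $\mu^{(t^2,t)}$ factors as $\mu_i\ast\nu'$ for some probability measure $\nu'$, Lemma~\ref{lm:entropy-conv-nondecrease} transfers each of these inequalities to $\mu^{(t^2,t)}$, yielding the desired scales as $s_j=2^{\sigma_j}$, with $K\ge c\b(\sigma_1-\sigma_2)\gtrsim \log(\tau^{-1})/\log(M_\l+1)\ge\log(\tau^{-1})/K_\l$. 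Otherwise Theorem~\ref{th:low-entropy-convolution} applies and gives
\[
H(\mu_{i+1};2^{\sigma_2}|2^{\sigma_1})\ge H(\mu_i;2^{\sigma_2}|2^{\sigma_1})+c_2\b(\log\b^{-1})^{-1}(\sigma_1-\sigma_2)-3.
\]
If this second alternative occurs for $n$ consecutive steps, summing the gains (and absorbing the additive $-3$ losses into $\sigma_1-\sigma_2\asymp\log(\tau^{-1})$, which is as large as we wish) produces $H(\mu_{n+1};2^{\sigma_2}|2^{\sigma_1})>\sigma_1-\sigma_2$ as soon as $n\gtrsim\log\b^{-1}/(c_2\b)\asymp\log(M_\l+1)\log\log(M_\l+2)\asymp K_\l$, contradicting the trivial upper bound on conditional entropy.

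It thus suffices to verify that at least $n\asymp K_\l$ of the intervals $I_i$ actually fit inside $(t^2,t)$. Using $\log\l^{-1}\le 2(1-\l)$ together with the hypothesis~\eqref{eq:lambda-LE1}, one computes
\[
l\log\l^{-1}\lesssim l(1-\l)\lesssim \frac{c\log(t^{-1})\min(\log M_\l,1)}{\log M_\l\cdot\log\log(M_\l+2)}\lesssim \frac{c\log(t^{-1})}{K_\l},
\]
so that $i_{\max}=\lfloor\log(t^{-1})/(l\log\l^{-1})\rfloor\gtrsim K_\l/c$. Choosing the constant in~\eqref{eq:lambda-LE1} sufficiently small in terms of $\a$ and $p$ makes $i_{\max}$ exceed the number of iterations required to trigger the contradiction, completing the proof.

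The main obstacle is quantitative rather than conceptual: one must simultaneously balance the entropy floor $\b(\sigma_1-\sigma_2)\asymp\log(\tau^{-1})/\log(M_\l+1)$ from Lemma~\ref{lm:separation}, the per-step gain $c_2\b(\log\b^{-1})^{-1}(\sigma_1-\sigma_2)$ from Theorem~\ref{th:low-entropy-convolution}, the total capacity $\sigma_1-\sigma_2$, and the interval budget $i_{\max}$ dictated by~\eqref{eq:lambda-LE1}, ensuring that the interval count dominates the number of iterations needed to exhaust the entropy. The asymmetric roles of $\log M_\l$ when $\log M_\l\ge 1$ versus $\log M_\l<1$, together with the $\min(\log M_\l,1)$ factor in~\eqref{eq:lambda-LE1}, are precisely what keep both regimes in play.
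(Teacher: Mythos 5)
Your proposal is correct and follows essentially the same approach as the paper: the same interval decomposition $I_i=(\l^{il}t,\l^{(i-1)l}t)$, the same inputs (Lemma \ref{lm:separation}, Theorem \ref{th:low-entropy-convolution}, and Lemma \ref{lm:entropy-conv-nondecrease} to transfer high entropy from $\mu^{I_1}*\cdots*\mu^{I_i}$ to $\mu^{(t^2,t)}$), and the same quantitative balance between per-step gain, total capacity $\sigma_1-\sigma_2$, and interval budget from~\eqref{eq:lambda-LE1}. The only presentational difference is that the paper pigeonholes directly to a single step whose entropy increment is at most $\frac1N\log(\tau_1/\tau)$ and derives the contradiction there, whereas you iterate and let the accumulated gains overflow the capacity; these are equivalent formulations of the same argument.
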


It may look confusing that the parameter $\tau$ is required to be small
depending on $\lambda$ in an uncontrolled fashion
and then $\lambda$ is bounded below by a quantity depending on $\tau$
in \eqref{eq:lambda-LE1}.
Observe, however, that the lower bound in \eqref{eq:lambda-LE1} depends only on the
ratio $\log t/\log \tau$.
Hence, we can first choose $\tau$, depending on $\lambda$ and then $t$ to make sure that the
hypotheses of the lemma hold,
which we will see is possible if $\lambda$ satisfies the assumptions in Proposition
\ref{pr:low-entropy-regime}.
Note also that \eqref{eq:lambda-LE1} may be considered a condition of the form $t\le\tau^a$
for some $a$ depending on $\lambda$.
This complements the bound $t\ge \tau^{1/6}$ in the next line.

\begin{proof}
Let
\[
\ell=\Big\lfloor\frac{\log\tau^{-1}}{2\log M_\l}\Big\rfloor
\]
as in Lemma \ref{lm:separation}.
We set
\[
N=\Big\lfloor\frac{\log t}{\ell\log \l}\Big\rfloor.
\]
We note that $N$ can be made arbitrarily large if the constant $c$ in the lemma is
sufficiently small, see \eqref{eq:N-bound} below.
For each
$i=1,\ldots, N$
we put
\[
I_i=(\l^{i\ell}t,\l^{(i-1)\ell}t].
\]
Note that $I_i=(a_i\l^\ell,a_i]$ for some number $a_i$ such that
\[
1\ge a_i\ge \l^{N\ell} t\ge t^2\ge \tau^{1/3}.
\]
Thus we can apply Lemma  \ref{lm:separation} and write
\[
H(\mu^{I_i};\tau|\tau_1)\ge c_0\log \tau^{-1}/\log (M_\l+1)
\footnote{$c_0=c_{17}=1/20$.}
\]
for all $i$, where $\tau_1$ is as in Lemma  \ref{lm:separation}
and $c_0$ is the constant $c$ from that lemma.
Moreover, $I_i\subset (t^2,t)$ for all $i$.

Since $H(\mu;\tau|\tau_1)\le\log(\tau_1/\tau)$ for any probability measure $\mu$, there is
$i\in\{0,\ldots, N-1\}$ such that
\begin{equation}\label{eq:limited-growth}
H(\mu^{I_1}*\ldots* \mu^{I_{i+1}};\tau|\tau_1)
\le H(\mu^{I_1}*\ldots*\mu^{I_{i}};\tau|\tau_1)+\frac{1}{N}\log (\tau_1/\tau).
\end{equation}
We combine the definitions of $N$ and $\ell$:
\begin{equation}\label{eq:1perN}
\frac{1}{N}\le\bigg\lfloor\frac{2\log (t)\log (M_\l)}{\log (\tau^{-1})\log (\l)}\bigg\rfloor^{-1}\le -\frac{\log (\tau)\log (\l)}{\log (t)\log (M_\l)}.
\quad\footnote{Provided $\log t/\log\tau$ is sufficiently large so that $N\ge 1$.
This definitely holds if $c_{19}=\a/(10^{10}\log\a^{-1})$ in \eqref{eq:lambda-LE1}.}
\end{equation}

Using $-\log(\l)\le2(1-\l)$, which is valid for $1/2\le \l\le 1$, \eqref{eq:lambda-LE1} implies
\[
-\frac{\log (\tau)\log(\l)}{\log (t)\min(\log M_\l,1)}<\frac{2c}{\log\log(M_\l+2)}.
\footnote{$c_{19}=\a/(10^{10}\log\a^{-1})$.}
\]
We combine this with
\[
\frac{\min(\log M_\l,1)}{\log M_\l}\le\frac{2}{\log(M_\l+1)}
\]
and use \eqref{eq:1perN} to get
\begin{equation}\label{eq:N-bound}
\frac{1}{N}\le \frac{4c}{K_\l}.\footnote{$c_{19}=\a/(10^{10}\log\a^{-1})$.}
\end{equation}

We apply Theorem \ref{th:low-entropy-convolution} for the measures
$\mu=\mu^{I_1}*\ldots* \mu^{I_{i}}$ and $\nu=\mu^{I_{i+1}}$
with $\b= c_0/\log (M_\l+1)$, $\s_1=\log \tau_1$ and $\s_2=\log \tau$.
We have already seen that the hypothesis of the theorem holds for $\nu$.
If the hypothesis on $\mu$ is also satisfied, then
\[
H(\mu^{I_1}*\ldots* \mu^{I_{i+1}};\tau|\tau_1)
\ge H(\mu^{I_1}*\ldots*\mu^{I_{i}};\tau|\tau_1)
+\frac{c_2\log(\tau_1/\tau)}{K_\l}-3.
\quad\footnote{$c_2=c_{18}=\a/(1.6\cdot10^9\log\a^{-1}).$
Indeed, we have $\log\b^{-1}=\log(\log(M_\l+1)20)\le \log\log(M_\l+2)+\log 20\le 8\log\log(M_\l+2)$,
because $\log(20)/\log\log(3)<7$, so we need here
$c_{18}\le c_{17}c/8$, where $c=\a/(10^7\log\a^{-1})$ is the constant in Theorem \ref{th:low-entropy-convolution}.}
\]
for some $c_2>0$ that depends on $c_0$ and the constant $c$ from
Theorem \ref{th:low-entropy-convolution}.
Note that
\[
\log(\tau_1/\tau)>c_0 \log(\tau^{-1})/\log(M_\l+1)-1,
\footnote{$c_0=c_{17}=1/20$. See page \pageref{page:c17}.}
\]
which follows from the conclusion of Lemma  \ref{lm:separation}.
Hence the term $-3$ in the above estimate becomes negligible, provided $\tau$ is small enough. 
We reached a contradiction with \eqref{eq:N-bound} and \eqref{eq:limited-growth} if $c$ is sufficiently small in \eqref{eq:N-bound}.
\footnote{We have contradiction if $4c_{19}<c_2=c_{18}$, which holds with the choice
$c_{19}=\a/(10^{10}\log\a^{-1})$ that we made.}

Hence the hypothesis on $\mu$ in Theorem \ref{th:low-entropy-convolution} fails,
and we can find a $1$-separated set $\s_1\ge\ldots \ge\s_K$ in
\[
\{ \s\in[\log \tau,\log \tau_1]:H(\mu^{I_1}*\ldots* \mu^{I_{i}};2^\s|2^{\s+1})>1-\a\}.
\]
of cardinality
\[
K>cK_\l^{-1}(\log\tau_1-\log\tau)
\quad\footnote{$c_{20}=1/(2\cdot 10^4\log\a^{-1})$. This requires $c_{20}\le c_{17}c$, where $c$ is the constant from
Theorem \ref{th:low-entropy-convolution}.
In the lemma, we need to set $c_{21}$ to satisfy $c_{21}\le c_{20}/2$, because $\log\tau^{-1}\le 2 (\log\tau_1-\log\tau )$.}
\]
with a constant $c$ that depends on $c_0$ and the constant in Theorem \ref{th:low-entropy-convolution}.
Since $I_j\subset(t^2,t)$ for all $j$, we have
\[
H(\mu^{(t^2,t)};2^\s|2^{\s+1})\ge H(\mu^{I_1}*\ldots* \mu^{I_{i}};2^\s|2^{\s+1})
\]
by Lemma \ref{lm:entropy-conv-nondecrease} for all $\s$.
This concludes the proof.
\end{proof}

We combine Lemmata \ref{lm:HE0} and \ref{lm:low-ent1} in the next lemma.

\begin{lem}\label{lm:low-ent2}
Let $\a_0$ \footnote{$\a_0=2^{-47}/10$}
be the number from Lemma \ref{lm:HE0} and let $c_0$ be a number such that
Lemma \ref{lm:low-ent1} holds with $\a=\a_0$ and $c=c_0$.
Let $\l<1$ be an algebraic number and $0<r,t,\tau<1$ be real numbers.
Suppose that $r$ is sufficiently small, and
\begin{equation}\label{eq:rtau-bounds}
2^{-(\log r^{-1})^{1/2}}>\tau>r,
\end{equation}
\begin{equation}\label{eq:ttau-bounds}
(\log\tau^{-1})^{1/2}<\log t^{-1} < \frac{c_0}{4K_\l}\log \tau^{-1},
\quad\footnote{$c_0=c_{17}=1/20$.}
\end{equation}
\[
\l> 1- c_0\frac{\log (t)\min(1,\log M_\l)}{\log (\tau)\log\log( M_\l+2)}.
\quad\footnote{$c_0=c_{22}=10^{-27}$. This requires $c_{22}\le \a_0/(10^{10}\log\a_0^{-1})$.}
\]

Then there are at least
\[
\frac{c_0K_\l^{-1}\log \tau}{4\log t}
\quad\footnote{$c_0=c_{23}=10^{-7}$.}
\]
 pairwise disjoint intervals
\[
I\subset(r/\tau^{c_0(\log (M_\l+1))^{-1}},r/\tau)
\quad\footnote{$c_0=c_{17}=1/20$.}
\]
such that the measure
$\mu^{I}$ satisfies $0$-{\rm HE} at scale $r$.
\end{lem}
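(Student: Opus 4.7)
The proof combines Lemma \ref{lm:low-ent1} (which exhibits many scales at which $\mu^{(t^2,t)}$ has high conditional entropy) with Lemma \ref{lm:HE0} (which converts any such scale into an interval $I$ for which $\mu^I$ satisfies $0$-HE at scale $r$). The plan is to first apply Lemma \ref{lm:low-ent1} with $\a=\a_0$ and with $c=c_0$, obtaining scales
\[
\tau^{c_0/\log(M_\l+1)}>s_1>\ldots>s_K>\tau,
\]
satisfying $s_i>2s_{i+1}$, $K\ge c_0 K_\l^{-1}\log\tau^{-1}$, and $H(\mu^{(t^2,t)};s_i|2s_i)>1-\a_0$ for every $i$. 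The hypotheses of Lemma \ref{lm:low-ent1} follow from \eqref{eq:ttau-bounds} (which already forces $t\ge\tau^{1/6}$ since $\log t^{-1}<c_0\log\tau^{-1}/(4K_\l)<(1/6)\log\tau^{-1}$) and from the $\l$-hypothesis of Lemma \ref{lm:low-ent2} after choosing $c_0$ small enough relative to the constant appearing in \eqref{eq:lambda-LE1}.

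Second, for each $s_i$ I will apply Lemma \ref{lm:HE0} to obtain that $\mu^{I_i}$ satisfies $0$-HE at scale $r$, where $I_i=(t^3r/s_i,r/s_i)$. Its hypotheses are verified as follows. The inequality $\log t^{-1}>(\log\log r^{-1})^{2}$ follows from \eqref{eq:rtau-bounds} and \eqref{eq:ttau-bounds}, which together yield $\log t^{-1}>(\log\tau^{-1})^{1/2}>(\log r^{-1})^{1/4}$, and this beats $(\log\log r^{-1})^{2}$ for $r$ small. The bound $r<s_i$ is immediate, and $s_i<t^4$ follows from \eqref{eq:ttau-bounds}: we have $4\log t^{-1}<c_0\log\tau^{-1}/K_\l\le c_0\log\tau^{-1}/\log(M_\l+1)$, using $\log\log(M_\l+2)\ge1$ (the boundary case of very small $M_\l$ being handled by a straightforward tightening of the assumption on $\log t^{-1}$), hence $t^4>\tau^{c_0/\log(M_\l+1)}>s_i$. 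Finally $\l>1-\a_0$ is forced by the hypothesis on $\l$ in Lemma \ref{lm:low-ent2} since $c_0$ is taken tiny.

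The remaining step is combinatorial: from the $K$ intervals $I_i$ I must extract a disjoint sub-collection. For $s_i>s_j$, the interval $I_i$ lies to the left of $I_j$, and the two are disjoint precisely when $r/s_i\le t^3r/s_j$, i.e.\ $\log s_j^{-1}-\log s_i^{-1}\ge3\log t^{-1}$. Because consecutive $s_i$'s are already $1$-separated in log scale (from $s_i>2s_{i+1}$), greedy selection that keeps only scales at log-distance at least $3\log t^{-1}$ retains
\[
\frac{K}{\lceil 3\log t^{-1}\rceil}\ge\frac{K}{4\log t^{-1}}\ge\frac{c_0 K_\l^{-1}\log\tau^{-1}}{4\log t^{-1}}=\frac{c_0 K_\l^{-1}\log\tau}{4\log t}
\]
scales. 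To guarantee that each chosen $I_i$ lies inside $(r/\tau^{c_0/\log(M_\l+1)},r/\tau)$, one discards those $s_i$ within the top $3\log t^{-1}$ of the log-range $[\log\tau,\,(c_0/\log(M_\l+1))\log\tau]$; this removes at most $O(\log t^{-1})$ scales, which is negligible compared with $K$.

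The main obstacle is not conceptual but bookkeeping: one must verify that the various parameter relations in \eqref{eq:rtau-bounds}--\eqref{eq:ttau-bounds} and the $\l$-hypothesis simultaneously (i) fall inside the domain of applicability of Lemma \ref{lm:low-ent1}, (ii) make the scale ranges produced there compatible with the requirement $s<t^4$ of Lemma \ref{lm:HE0}, and (iii) leave enough room for the log-gap $3\log t^{-1}$ used in the disjointness selection. Once these hold, the mathematical content is inherited entirely from the two prior lemmas and the argument concludes.
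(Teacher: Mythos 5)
Your proof follows exactly the same route as the paper's: apply Lemma~\ref{lm:low-ent1} with $\alpha=\alpha_0$ to produce the scales $s_1>\ldots>s_K$; form the intervals $I_i=(t^3r/s_i,r/s_i)$; verify the hypotheses of Lemma~\ref{lm:HE0} from \eqref{eq:rtau-bounds}--\eqref{eq:ttau-bounds}; and then extract a $3\log t^{-1}$-separated subcollection of indices, discarding the largest few scales so the resulting intervals stay inside $(r/\tau^{c_0(\log(M_\l+1))^{-1}},r/\tau)$. The verification steps and the final count match what the paper does (the paper selects indices $I_{\lceil 3\log t\rceil j}$, which is the same greedy selection), so the proposal is correct.
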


\begin{proof}
By \eqref{eq:ttau-bounds}, we have
\[
t>\tau^{c_0/4K_\l}\ge \tau^{1/6}
\]
so we can apply Lemma \ref{lm:low-ent1}, and obtain an integer
\[
K\ge c_0K_\l^{-1}\log \tau^{-1}
\quad\footnote{$c_0=c_{23}=10^{-7}$.
This requires $c_{23}\le 1/(10^5\log\a_0^{-1})$.}
\]
and real numbers
\[
\tau^{c_0(\log (M_\l+1))^{-1}}>s_1>\ldots>s_K>\tau
\quad\footnote{$c_0=c_{17}=1/20$.}
\]
such that $s_i>2s_{i+1}$ and
\[
H(\mu^{(t^2,t)};s_i|2s_i)>1-\a_0
\]
for all $i$.

We put $I_i=(t^3r/s_i,r/s_i)$.
We note that
\[
r/s_i\in(r/\tau^{c_0(\log (M_\l+1))^{-1}},r/\tau)
\]
for all $i=1,\ldots, K$, hence $I_i\subset (r/\tau^{c_0(\log (M_\l+1))^{-1}},r/\tau)$
for all $i\ge 3\log t^{-1}$.

We combine \eqref{eq:ttau-bounds} and \eqref{eq:rtau-bounds} and obtain
\[
t<2^{-(\log r^{-1})^{1/4}}<2^{-(\log\log r^{-1})^2},
\]
if $r$ is sufficiently small.
Moreover, \eqref{eq:ttau-bounds} implies that
\[
t^4>\tau^{c_0(\log(M_\l+1)^{-1})}>s_i
\]
for all $i$.
Thus Lemma \ref{lm:HE0} applies and
$\mu^{I_i}$ satisfy $0$-HE at scale $r$ for each $i$.

We observe that $I_i$ and $I_j$ are disjoint provided $2^{|i-j|}\ge t^{-3}$, 
that is $|i-j|\ge3\log t^{-1}$.
This shows that the intervals $I_{\lceil 3\log t\rceil j}$ satisfy all the requirements of the lemma.
\end{proof}

\subsubsection{Proof of Proposition \ref{pr:low-entropy-regime}}
With $c_0$ as in Lemma \ref{lm:low-ent2},
we put $a=c_0(\log( M_\l+1))^{-1}$. \footnote{$c_0=c_{17}=1/20$.}

We set $N$ in such a way that
\[
2^{-(\log r^{-1})^{1/2}}>r^{a^N},
\]
i.e. take $N=\lfloor\log\log r^{-1}/2\log a^{-1}\rfloor$.
We apply Lemma \ref{lm:low-ent2} for $n=0,1,\ldots, N$ with
$\tau=r^{a^n}$.
The choice of $a$ guarantees that the intervals $(r/\tau^{c_0(\log(M_\l+1))^{-1}},r/\tau)$
are pairwise disjoint for the different values of $n$.

For each $n$, we will set $t$ in Lemma \ref{lm:low-ent2} in such a way that
\begin{equation}\label{eq:intervals}
\frac{c_0K_\l^{-1}\log \tau}{4\log t}\cdot \frac{\log\log r^{-1}}{2\log a^{-1}}
\ge 4\log\log r^{-1}.\quad\footnote{$c_0=c_{23}=10^{-7}$.}
\end{equation}
If we can satisfy the conditions of Lemma \ref{lm:low-ent2} for each $n$,
then \eqref{eq:intervals} is a lower bound on the number of disjoint intervals $I$
such that $\mu^I$ satisfies $0$-HE at scale $r$, which proves the proposition.

Taking
\[
\frac{\log t}{\log \tau}=\frac{c_0}{32K_\l\log a^{-1}}
\quad\footnote{$c_0=c_{23}=10^{-7}$.}
\]
we satisfy \eqref{eq:intervals} and \eqref{eq:ttau-bounds}.
(The left hand side of  \eqref{eq:ttau-bounds} holds if $r$ and hence $t$ and $\tau$
are sufficiently small in terms of $\l$, $p$ and $a$.)
It remains to verify that the lower bound on $\l$
required in Lemma \ref{lm:low-ent2} holds, i.e. that
\begin{equation}\label{eq:lambda-requirement}
1-\l<c_0\frac{\log(t)\min(1,\log M_\l)}{\log (\tau)\log\log (M_\l+2)}.
\quad\footnote{$c_0=c_{22}=10^{-27}$.}
\end{equation}
Combining the definitions of $a$, $t$ and $\tau$, we write
\[
c_0\frac{\log(t)\min(1,\log M_\l)}{\log(\tau)\log\log (M_\l+2)}
>c\frac{\min(1,\log M_\l)}{\log(M_\l+1)(\log\log(M_\l+2))^3}
\quad\footnote{By the definition of $a$, $\log(a^{-1})<\log(20)+\log\log(M_\l+2)<8\log\log(M_\l+2)$.
Hence $\log t/\log \tau >c_{23}/(32\cdot 8 \log(M_\l+1)\log\log(M_\l+2)^2)>10^{-10}/\log(M_\l+1)\log\log(M_\l+2)^2)$.
The constant $c_0$ in this line is from \eqref{eq:lambda-requirement} and it is equal to $c_{22}=10^{-27}$.
Therefore, the claim holds with $c=10^{-37}$.
}
\]
for some number $c$ depending only on $c_0$.
Hence \eqref{eq:lambda-requirement} holds indeed
by the assumptions of the proposition.
\footnote{If $\l$ is not a root of a polynomial with coefficients $-1$, $0$ and $1$, then necessarily $M_\l\ge 2$,
hence $\min(1,\log M_\l)=1$, so in this special case we can set $c_{24}=10^{-37}$ in the proposition.}

\bibliography{bibfile}

\bigskip

\noindent{\sc Centre for Mathematical Sciences,
Wilberforce Road, Cambridge CB3 0WA,
UK}\\
{\em e-mail address:} pv270@dpmms.cam.ac.uk

\end{document}